\documentclass[11pt]{amsart}
\usepackage[margin=1.1in]{geometry}
\usepackage{amsmath,amssymb,amsthm}
\usepackage{array,booktabs,longtable}
\usepackage{xcolor}
\usepackage[colorlinks=true,linkcolor=blue!50!black,citecolor=blue!50!black]{hyperref}

\newif\iflinenumbers
\linenumbersfalse
\iflinenumbers
  \usepackage{lineno}
  \linenumbers
\fi

\newtheorem{theorem}{Theorem}[section]
\newtheorem{lemma}[theorem]{Lemma}
\newtheorem{proposition}[theorem]{Proposition}
\newtheorem{corollary}[theorem]{Corollary}
\theoremstyle{definition}
\newtheorem{definition}[theorem]{Definition}
\newtheorem{example}[theorem]{Example}
\theoremstyle{remark}
\newtheorem{remark}[theorem]{Remark}
\newtheorem{convention}[theorem]{Convention}

\DeclareMathOperator{\Inn}{Inn}
\DeclareMathOperator{\Aut}{Aut}
\DeclareMathOperator{\Out}{Out}
\DeclareMathOperator{\Syl}{Syl}
\DeclareMathOperator{\Fix}{Fix}
\DeclareMathOperator{\PSL}{PSL}
\DeclareMathOperator{\PSU}{PSU}
\DeclareMathOperator{\PSp}{PSp}
\DeclareMathOperator{\Sp}{Sp}
\DeclareMathOperator{\SL}{SL}
\DeclareMathOperator{\SU}{SU}
\DeclareMathOperator{\GL}{GL}
\DeclareMathOperator{\GU}{GU}
\DeclareMathOperator{\Sz}{Sz}
\DeclareMathOperator{\ord}{ord}
\newcommand{\Z}{\mathbb{Z}}
\newcommand{\F}{\mathbb{F}}
\newcommand{\Pos}{\mathcal{P}}
\newcommand{\vp}[1]{v_{#1}}
\newcolumntype{L}[1]{>{\raggedright\arraybackslash}p{#1}}

\title[Groups factorized by non-conjugate maximal subgroups]{Finite groups
that are the product of every pair of non-conjugate maximal subgroups are
soluble}

\author{Richie Sater}
\address{Independent Researcher, Washington, DC, United States}
\email{richiesater@gmail.com}
\urladdr{https://orcid.org/0009-0007-9051-8207}

\date{August 27, 2026}

\subjclass[2020]{Primary 20D40; Secondary 20E28, 20D05}
\keywords{Factorizations of finite groups, maximal subgroups, soluble
groups, simple groups, Kourovka Notebook Problem 10.34}

\begin{document}

\begin{abstract}
We prove that every finite group that is the product of every pair of its
non-conjugate maximal subgroups is soluble, answering Problem 10.34 of the
Kourovka Notebook. The almost-simple case was proved by Tikhonenko and
Tyutyanov. We treat the remaining minimal-counterexample branch, where the
unique minimal normal subgroup has the form $S^k$, with $S$ nonabelian simple
and $k\geq2$. Two automorphism-stable coordinate subgroup classes satisfying a
maximal-supplement criterion produce non-conjugate maximal supplements. If the
factorization hypothesis held, their product would impose a $p$-adic
divisibility requirement growing linearly with $k$, while the quotient
contributes only coordinate outer automorphisms and a factor dividing $k!$. A
fixed valuation gap therefore excludes every $k\geq2$ at once. Suitable
subgroup classes are constructed uniformly across the infinite families of
finite simple groups using parabolic, torus, and primitive-prime-divisor
arguments; stable flag parabolics handle graph fusion, while GAP certificates
cover designated finite and sporadic cases. The resulting all-$k$ obstruction
provides a reusable mechanism for eliminating direct-power socles in
finite-group factorization problems.
\end{abstract}

\maketitle

\section{Introduction}\label{sec:intro}

This section states the problem and answer, identifies the new mechanism and
its inherited inputs, and gives a concise roadmap of the proof and its
verification boundary.

Problem 10.34 of the Kourovka Notebook, posed by V.~S.~Monakhov in 1986
\cite{Kourovka}, asks:

\begin{quote}
\emph{Does there exist a finite non-soluble group that coincides with the
product of any two of its non-conjugate maximal subgroups?}
\end{quote}

Many soluble groups have this property: it holds vacuously whenever all
maximal subgroups are conjugate, and $S_4=AB$ for every pair of non-conjugate
maximal subgroups $A,B$. The question is whether this factorization condition
forces solubility. It does.

\begin{theorem}[Main theorem]\label{thm:main}
Let $G$ be a finite group such that $G=AB$ for every pair of maximal
subgroups $A,B\leq G$ that are not conjugate in $G$. Then $G$ is soluble.
\end{theorem}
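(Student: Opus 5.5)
We argue by minimal counterexample. Let $G$ be a non-soluble group of least order satisfying the hypothesis of Theorem~\ref{thm:main}.

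\textbf{Step 1: the hypothesis passes to quotients.} If $N\trianglelefteq G$, then the maximal subgroups of $G/N$ are the images of maximal subgroups of $G$ containing $N$. Two such subgroups $A,B\geq N$ are conjugate in $G$ exactly when $A/N$ and $B/N$ are conjugate in $G/N$. So $G/N$ again satisfies the hypothesis.

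\textbf{Step 2: reduction to a monolithic group.} By minimality, $G/N$ is soluble for every nontrivial normal subgroup $N$. If $G$ had two distinct minimal normal subgroups $N_1,N_2$, then $G$ would embed in $G/N_1\times G/N_2$ and hence be soluble. So $G$ has a unique minimal normal subgroup $M$, and $M$ is nonsoluble. The Frattini subgroup is nilpotent, so if $M\leq\Phi(G)$ then $M$ would be soluble; hence $M\not\leq\Phi(G)$. It follows that $M=S^k$ with $S$ nonabelian simple, $C_G(M)=1$, and $G$ embeds in $\Aut(S)\wr S_k$ with $G/M$ soluble.

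\textbf{Step 3: the cases $k=1$ and $k\geq2$.} When $k=1$, $G$ is almost simple, and this case is the theorem of Tikhonenko and Tyutyanov, which we invoke. The new work is the case $k\geq2$. The plan is the following.

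\begin{enumerate}
\item Choose a prime $p$ dividing $|S|$, and two $\Aut(S)$-stable families of subgroups of $S$, say classes $\mathcal X$ and $\mathcal Y$. They should satisfy a numerical condition: for $X\in\mathcal X$ and $Y\in\mathcal Y$, the $p$-part of $|S|$ exceeds the $p$-part of $|X||Y|/|X\cap Y|$ by a fixed gap $\delta\geq1$, even after multiplying by the $p$-part of $|\Out(S)|$.
\item Take normalizers in $G$ of the coordinate products $X^k$ and $Y^k$ (by the Frattini argument these supplement $M$), and pass to maximal subgroups $A\geq N_G(X^k)$ and $B\geq N_G(Y^k)$. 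This step needs a maximal-supplement criterion guaranteeing that $A\cap M$ and $B\cap M$ are still products of coordinate subgroups of the chosen types, and that $A$ and $B$ are not conjugate.
\item Suppose $G=AB$. Intersecting with $M$ and counting orders gives
\[
|G|_p\leq |A|_p|B|_p/|A\cap B|_p .
\]
The $M$-contribution to $|G|_p$ exceeds the $M$-contribution to the right-hand side by $k\delta$. The quotient $G/M$ can make up at most a factor of $|\Out(S)|_p^{\,k}\cdot(k!)_p$. The $\Out(S)$ factors are absorbed coordinate by coordinate by the choice of $\delta$, and $v_p(k!)<k/(p-1)\leq k$. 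Hence the gap $k\delta$ cannot be closed, giving a contradiction uniformly for all $k\geq2$.
\end{enumerate}

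\textbf{Step 4: constructing the classes $\mathcal X,\mathcal Y$.} This is the main obstacle: the classes must be built for every finite simple group $S$.

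\begin{itemize}
\item For groups of Lie type, I would first try two parabolic subgroups that are non-conjugate and stable under graph automorphisms, taking $p$ to be the defining characteristic. Their product misses a large part of the unipotent radical, by Bruhat decomposition counting. Where a graph automorphism fuses parabolics (types $A_n$, $D_4$, $E_6$), I would instead use stable flag parabolics.
\item For alternating groups and small-rank exceptions, I would use tori together with primitive prime divisors $p$ of $q^e-1$, taking a torus normalizer and a second subgroup of order coprime to $p$.
\item The sporadic groups and small leftover cases would be checked by computer.
\end{itemize}

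The hardest point is verifying the maximal-supplement criterion, namely that the maximal subgroups $A,B$ really intersect $M$ in the intended coordinate form and are non-conjugate. It is also hard to make the $p$-adic gap $\delta$ large enough to dominate $|\Out(S)|_p$ in every family.
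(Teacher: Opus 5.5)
\textbf{Assessment: the proposal has genuine gaps.} Your skeleton is exactly the paper's: minimal counterexample, reduction to a unique minimal normal subgroup $M=S^k$ with $C_G(M)=1$, Tikhonenko--Tyutyanov for $k=1$, and a $p$-adic gap growing like $k\delta$ against the wreath-top budget $|\Out(S)|_p^k(k!)_p$. Your Step~3 arithmetic is sound; the refinement by $|X\cap Y|$ over all representative pairs is a valid variant of the paper's condition $v_p(|S|)-v_p(|V|)-v_p(|W|)>v_p(x)$.

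\textbf{First gap: the maximal-supplement criterion.} This is the step you defer, and passing to arbitrary maximal overgroups $A\geq N_G(X^k)$ is the wrong move. $A\cap M$ need not be a coordinate product, it may involve a larger class than the chosen one, and the overgroups of $N_G(X^k)$ and $N_G(Y^k)$ may be conjugate. The missing idea is to choose classes for which $B_V=N_G(V^k)$ is \emph{already} maximal. The choice depends on the coordinate closure $X=\pi_1(G_1)\Inn(S)$, not on $\Aut(S)$, and the conditions are:
$[V]$ is $X$-stable, $N_S(V)=V$, $[V]$ is maximal among $X$-stable self-normalizing classes, and $V$ is normally saturating ($\langle V^W\rangle=W$ for all $V\leq W\leq S$).
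Here is why these suffice. If $B_V\leq H$, the coordinate kernels of $T=H\cap M$ are normal in the projections $W_i$ and contain $V$, so saturation forces $T=\prod_i W_i$. Without saturation, $T$ can be a non-product subdirect subgroup. Transitivity and stability make all $[W_i]$ a single $X$-stable class. The normalizer tower plus poset maximality then leave only $W_i=V$ (so $H=B_V$) or $W_i=S$ (so $H=G$). Nonconjugacy of $B_V,B_W$ for distinct classes follows by reading $(V^k)^g=W^k$ in one coordinate. $X$-stable classes of maximal subgroups of $S$ qualify automatically. A graph-invariant flag parabolic $Q_J$ qualifies (Lemma~\ref{lem:P}) exactly when none of its proper parabolic overgroup classes is $X$-stable. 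This is why the paper uses two maximal parabolics when $X$ has trivial graph part, and invariant flags otherwise.

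\textbf{Second gap: your Lie-type prime fails.} For the defining characteristic $p$, every parabolic contains a Sylow $p$-subgroup, so $v_p(|P_1|)=v_p(|P_2|)=v_p(|S|)$ and the gap is $-v_p(|S|)$. Even your refined quantity $v_p(|P_1\cap P_2|)-v_p(|S|)$ is at most $0$, with equality when $P_1\cap P_2$ contains a Borel subgroup. ``Bruhat counting'' of what $P_1P_2$ misses is only a size estimate. Sizes alone cannot beat $t\leq|\Out(S)|^k k!$ uniformly in $k$, because $(k!)^{1/k}\to\infty$ (Remark~\ref{rem:crit}).

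The paper instead uses a cross-characteristic primitive prime divisor $r$ of $p^{hf}-1$, for instance $h=n$ for $\PSL(n,q)$ and $h=2n$ for types $B_n,C_n$. Such an $r$ divides $\Phi_h(q)$ and hence $|S|$, but divides no proper Levi order. The congruence $r\equiv1\pmod{hf}$ keeps it off $|\Out(S)|$, and Zsigmondy exceptions are routed to substitute primes or GAP certificates. In the infinite families the defining prime is used only for $\PSL(2,q)$, $q$ odd, where both classes are dihedral torus normalizers of orders $q\pm1$.

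Your alternating-group suggestion also does not apply as stated, since $A_n$ has no underlying field. The paper takes $p$ to be the largest prime $\leq n$, so $p\geq n/2+3$ for $n\geq15$. It then uses two intransitive or imprimitive maximal subgroups all of whose parts are smaller than $p$.
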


We say that $G$ has \emph{property $\mathrm{P}$} when it satisfies the
hypothesis of Theorem~\ref{thm:main}. Tikhonenko and Tyutyanov proved the
almost-simple case \cite{TT2010}; their result supplies the $k=1$ branch of
our proof. They also record that Zenkov had announced a negative answer in a
1997 conference abstract \cite{Zenkov1997}, while stating that no proof of the
announcement was known to them. The July 2026 Kourovka Notebook still lists
the problem as unsolved \cite{Kourovka}. Accordingly, this paper proves the
theorem but does not claim priority for the conclusion. After this paper's
arXiv v1 and v2, dated 4 August 2026 and 6 August 2026, respectively, Li and
Yang submitted a separately authored preprint dated 19 August 2026 that states
the same theorem \cite{LiYang2026}. Their proof uses product--socle lifting
from nonfactorizing almost-simple coordinate subgroups rather than the
stable-class valuation obstruction. These are distinct proof architectures.

For comparison, Theorem~\ref{thm:Dprime}
also recovers the almost-simple conclusion; it is not used in the proof of
Theorem~\ref{thm:main}.

The maximal factorizations of finite simple groups and their automorphism
groups were classified by Liebeck, Praeger, and Saxl \cite{LPS1990}. Those
tables identify the factorizations that occur. Here the decisive step is
instead a reusable obstruction: two local subgroup classes in a simple
coordinate group force an impossible divisibility condition in every
transitive direct-power extension. The classification of finite simple groups,
maximality and order data, automorphism descriptions, parabolic theory, and
Zsigmondy's theorem remain external inputs to the family coverage.

\subsection*{Proof mechanism}
Suppose that a minimal counterexample $G$ exists. Its unique minimal normal
subgroup has the form $N=S^k$, where $S$ is non-abelian simple, and $G$ has a
transitive coordinate realization inside an automorphism wreath product. If
$[V]$ is $X$-stable and self-normalizing, maximal in the stable-class poset
$\Pos_X(S)$, and normally saturating, then it produces a maximal supplement
$B_V=N_G(V^k)$ whose order is $|G/N||V|^k$.

If $[V]$ and $[W]$ are distinct suitable classes, property $\mathrm{P}$ forces
$G=B_VB_W$. The subgroup-product formula then imposes a $p$-adic divisibility
condition whose exponent grows linearly with $k$. By contrast, the $p$-part
available in $G/N$ is bounded by the coordinate outer automorphisms and $k!$.
A fixed valuation gap therefore gives a contradiction for every $k\geq2$.

It remains to construct one such pair for each non-abelian finite simple group.
Uniform parabolic, torus, and primitive-prime-divisor arguments handle the
infinite families. Graph automorphisms require stable flag-parabolic classes
in several families. GAP certificates handle the designated finite base,
sporadic groups, and exceptional parameters.

\subsection*{Contribution and dependence}
The conceptual hierarchy of the proof is summarized below. Imported
classification and primitive-prime-divisor results are inputs, not claims of
novelty.

\begin{table}[htbp]
\small
\begin{tabular}{@{}p{0.42\textwidth}p{0.50\textwidth}@{}}
\toprule
Component & Status \\
\midrule
Almost-simple $k=1$ result & Prior theorem of Tikhonenko--Tyutyanov \\
CFSG and maximal-subgroup/order data & Cited external inputs \\
Reduction to a unique socle $S^k$ & Standard ingredients assembled for property $\mathrm{P}$ \\
Suitable stable, self-normalizing classes satisfying poset maximality and normal saturation & Central maximal-supplement machinery with exact order \\
Uniform-in-$k$ product-supplement valuation obstruction & Principal conceptual result \\
Stable flag parabolics under graph fusion & Distinctive structural mechanism \\
Infinite-family coverage & Application of the criterion using cited classification and arithmetic inputs \\
Finite and sporadic coverage & GAP-certified component \\
Lean and Rocq developments & Partial, conditional formal verification; not an end-to-end proof \\
\bottomrule
\end{tabular}
\caption{New, inherited, and externally supplied components.}
\label{tab:contributions}
\end{table}

The mathematical yield is the passage from local stable-class data in $S$ to
an obstruction valid for every multiplicity $k$. Poset maximality and normal
saturation are the structural assumptions that turn local classes into
maximal supplements; the valuation gap is the transferable numerical input.
The graph-fusion cases show that ordinary maximal subgroups are not essential:
stable flag parabolics can replace fused maximal classes.

The manuscript proves the theorem by ordinary mathematical argument. Lean
checks named structural and arithmetic statements under explicit hypotheses;
Rocq/MathComp supplies a direct-power component through an audited interface;
GAP certifies designated finite cases; and Python checks manifests,
certificates, source maps, and independent arithmetic. Appendix~\ref{app:trust}
states exactly how these components compose and which claims remain external.

\section{Reduction to the monolithic coordinate case}\label{sec:reduction}

By the end of this section, every counterexample has been converted into a
transitive $S^k$-coordinate problem. The remaining task will be to construct
two incompatible maximal supplements from local subgroup classes of $S$.
The quotient argument and the monolithic reduction are standard; the explicit
coordinate closure records exactly the arithmetic contribution available
above the socle.

\begin{lemma}[Factorizations and conjugate representatives]\label{lem:conj}
If $G=AB$ for subgroups $A,B$, then $G=A^xB^y$ for all $x,y\in G$.
\end{lemma}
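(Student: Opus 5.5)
The plan is to first replace $A^xB^y$ by a single conjugate of a product, and then use that every element of $G$ already lies in $AB$. Write $x=a_0b_0$ with $a_0\in A$ and $b_0\in B$, which is possible because $G=AB$. Conjugation by $a_0$ fixes $A$ setwise, so
\[
A^x=(A^{a_0})^{b_0}=A^{b_0}.
\]
This reduces the problem to showing $G=A^{b}B^{y}$ for all $b\in B$ and $y\in G$. (Here $A^g=g^{-1}Ag$; the other convention only swaps which side is used.)

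Next I would move both conjugations to a single element. Since $b\in B$, we have $B^{y}=B^{b\,b^{-1}y}=\bigl(B^{b^{-1}y}\bigr)$, and $A^b B^{b\cdot b^{-1}y}$ does not yet factor cleanly, so I will arrange things differently. It is cleaner to treat the two factors separately.

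\emph{Step one.} I claim $G=A^xB$ for every $x\in G$. From $G=AB$ we get $G=G^x=A^xB^x$. Writing $x=a_0b_0$ as above, $B^x=B^{a_0b_0}$. A more direct route is to use $G=BA$, obtained by inverting $G=AB$. Then $x=b_1a_1$ with $b_1\in B$ and $a_1\in A$, so $A^x=A^{b_1a_1}$, which is not immediately simpler. Instead I would conjugate the whole equality $G=AB$ by $x^{-1}$ appropriately. With $x=a_0b_0$ we have $A^x=A^{b_0}$, and $B^{b_0}=B$. Hence
\[
A^xB=A^{b_0}B^{b_0}=(AB)^{b_0}=G^{b_0}=G.
\]

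\emph{Step two.} Apply step one with the roles of the factors exchanged. Since $G=A^xB$, inverting gives $G=BA^x$. Applying step one to this factorization, with $B$ as the factor being conjugated, yields $G=B^yA^x$ for every $y$. Inverting once more gives $G=A^xB^y$.

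The only point needing care is the conjugation convention and the inversion identity $(HK)^{-1}=K^{-1}H^{-1}=KH$ for subgroups. I expect no genuine obstacle; the whole argument is the observation that each conjugate of $A$ is already conjugate by an element of $B$, together with $|A^xB^y|=|A||B|/|A^x\cap B^y|$. That order formula also offers an optional order-theoretic check, but it is not needed.
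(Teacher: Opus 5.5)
Your proof is correct. The second paragraph and the first half of Step one are abandoned attempts and should be cut from a final write-up. The actual proof is two pieces: the computation $A^xB=A^{b_0}B^{b_0}=(AB)^{b_0}=G$ for $x=a_0b_0$, and then the symmetrization through $G=G^{-1}$ and $(HK)^{-1}=KH$.

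The paper takes a one-step route. It factors the single element $xy^{-1}=ab$ and computes directly
\[
A^xB^y=x^{-1}A(xy^{-1})By=x^{-1}A\,ab\,B\,y=x^{-1}(AB)\,y=G,
\]
so both conjugations are absorbed at once and no inversion is needed.

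Your route first proves the one-sided statement $G=A^xB$. It then obtains the second conjugate by exchanging the roles of the factors through inversion. This is slightly longer, but it isolates a useful intermediate fact: when $G=AB$, every conjugate of $A$ is already a conjugate by an element of $B$.

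Both arguments rest on the same absorption identities $Aa=A$ and $bB=B$. Neither needs the order formula you mention at the end.
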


\begin{proof}
Since $G=AB$, write $xy^{-1}=ab$ with $a\in A$, $b\in B$. Then, as
subsets of $G$,
\[
A^xB^y=x^{-1}Ax\,y^{-1}By=x^{-1}A(ab)By
 =x^{-1}(AB)\,y=x^{-1}Gy=G. \qedhere
\]
\end{proof}

Consequently, property $\mathrm{P}$ depends only on the conjugacy classes
of maximal subgroups: it says every pair of \emph{distinct classes} has
one (hence every) representative pair factorizing $G$.

\begin{lemma}\label{lem:quot}
Property $\mathrm{P}$ is inherited by quotients.
\end{lemma}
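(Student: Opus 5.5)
We argue via the correspondence theorem. Let $G$ have property $\mathrm{P}$, let $K\trianglelefteq G$, and write $\bar G=G/K$ with the natural projection $\pi\colon G\to\bar G$. Maximal subgroups of $\bar G$ are exactly the images $\bar A=A/K$ of maximal subgroups $A$ of $G$ that contain $K$. The proof then has two steps.

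\emph{Conjugacy transfers back to $G$.} Take maximal subgroups $\bar A=A/K$ and $\bar B=B/K$ of $\bar G$ that are not conjugate in $\bar G$. Then $A$ and $B$ are maximal in $G$. Suppose $A^g=B$ for some $g\in G$. Applying $\pi$ and using that $K\le A$ gives $\bar A^{\bar g}=\bar B$, a contradiction. So $A$ and $B$ are non-conjugate maximal subgroups of $G$.

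\emph{The factorization projects.} Property $\mathrm{P}$ now gives $G=AB$. Since $\pi$ is a surjective homomorphism, $\pi(AB)=\pi(A)\pi(B)$ as subsets, so $\bar G=\bar A\,\bar B$. Hence $\bar G$ has property $\mathrm{P}$.

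There is no real obstacle here. The only point to check is the direction of the conjugacy argument: non-conjugacy in the quotient must imply non-conjugacy upstairs, and it does, because a conjugation in $G$ descends to one in $\bar G$. Lemma~\ref{lem:conj} is not needed, since we work with the representatives $A$ and $B$ directly. The vacuous case, where all maximal subgroups of $\bar G$ are conjugate, requires no separate treatment.
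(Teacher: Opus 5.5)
Your proof is correct and follows essentially the same route as the paper: you lift maximal subgroups of $G/K$ via the correspondence theorem, note that non-conjugate ones lift to non-conjugate maximal subgroups of $G$, and project the factorization $G=AB$. The only difference is that you write out the conjugacy-descent step, which the paper states in a single clause.
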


\begin{proof}
Maximal subgroups of $G/K$ are the images of the maximal subgroups of $G$
containing $K$; non-conjugate ones lift to non-conjugate ones, and
factorizations project.
\end{proof}

\begin{proposition}[Minimal counterexample structure]\label{prop:min}
Let $G$ have least order among the non-soluble groups with property
$\mathrm{P}$.
Then $G$ has a unique minimal normal subgroup $N=S^k$ with $S$ non-abelian
simple and $k\geq 1$, $C_G(N)=1$, $G/N$ is soluble, and $G$ embeds in
$\Aut(S)\wr S_k$ with $N$ mapping onto $\Inn(S)^k$ and $G$ acting
transitively on the $k$ coordinates.
\end{proposition}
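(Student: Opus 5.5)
The plan is to combine Lemma~\ref{lem:quot} with minimality, and then apply the standard embedding of a monolithic group into a wreath product.

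\textbf{Step 1: a unique minimal normal subgroup.} Let $N$ be a minimal normal subgroup of $G$. Since $G\neq 1$, such an $N$ exists. By Lemma~\ref{lem:quot}, $G/N$ has property $\mathrm{P}$. Since $|G/N|<|G|$, minimality forces $G/N$ to be soluble. As $G$ is not soluble, $N$ cannot be soluble, so $N$ is non-abelian.

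Now suppose $M\neq N$ is a second minimal normal subgroup. Then $M\cap N=1$, and $G$ embeds in $G/M\times G/N$. By the same argument both factors are soluble, so $G$ would be soluble, a contradiction. Hence $N$ is the unique minimal normal subgroup.

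A non-abelian minimal normal subgroup is characteristically simple, so $N\cong S^k$ with $S$ non-abelian simple and $k\geq1$. Its simple factors $S_1,\dots,S_k$ are permuted transitively by conjugation in $G$, by minimality of $N$.

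\textbf{Step 2: trivial centralizer.} The subgroup $C=C_G(N)$ is normal in $G$. We have $C\cap N=Z(N)=1$, because $N$ is a direct product of non-abelian simple groups. If $C\neq1$, then $C$ would contain a minimal normal subgroup of $G$. By uniqueness that subgroup is $N$, so $N\leq C$ and $N\leq C\cap N=1$, which is absurd. Therefore $C_G(N)=1$.

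\textbf{Step 3: the wreath-product embedding.} Since $C_G(N)=1$, conjugation gives an injection $G\hookrightarrow\Aut(N)$ that sends $N$ onto $\Inn(N)\cong\Inn(S)^k$. The standard fact that $\Aut(S^k)\cong\Aut(S)\wr S_k$ for non-abelian simple $S$ completes the embedding.

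To justify this fact, note that the $S_i$ are exactly the minimal normal subgroups of $N$. Hence every automorphism permutes them. Fixing isomorphisms $S_i\cong S$ then identifies the stabilizer of all the factors with $\Aut(S)^k$, and the induced permutation gives the top group $S_k$.

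Transitivity of $G$ on the coordinates is precisely the transitivity on $\{S_1,\dots,S_k\}$ recorded in Step 1.

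\textbf{Main obstacle.} The only point needing care is the identification $\Aut(S^k)\cong\Aut(S)\wr S_k$, together with the check that the image of $N$ is $\Inn(S)^k$. Both follow from the fact that the minimal normal subgroups of $S^k$ are exactly the $S_i$. I would cite this as the standard description rather than reprove it. Everything else is routine bookkeeping from Lemma~\ref{lem:quot} and minimality.
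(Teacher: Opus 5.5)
Your proposal is correct and takes essentially the same route as the paper. Quotient inheritance plus minimality makes every proper quotient soluble, which forces a unique non-abelian minimal normal subgroup $N=S^k$; then $C_G(N)\cap N=Z(N)=1$ gives $C_G(N)=1$, and conjugation gives the transitive embedding into $\Aut(S)\wr S_k$. The only cosmetic difference is in how you reach a unique non-abelian $N$. You get non-abelianness directly from the solubility of $G/N$ and prove uniqueness via $G\hookrightarrow G/M\times G/N$. The paper instead first shows that the soluble radical is trivial and then uses $N_2\cong N_2N_1/N_1\leq G/N_1$.
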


\begin{proof}
Every proper quotient of $G$ has property $\mathrm{P}$
(Lemma~\ref{lem:quot}), hence is soluble by minimality of $|G|$. The
soluble radical $R$ of $G$ is trivial: otherwise $G/R$ is non-soluble
(as $R$ is soluble) with property $\mathrm{P}$, contradicting minimality.
So the socle of $G$ is a direct product of non-abelian minimal normal
subgroups. If $N_1\neq N_2$ were two of them, then $G/N_1$ is soluble
while $N_2\cong N_2N_1/N_1\leq G/N_1$ is non-soluble, a contradiction.
Hence there is a unique minimal normal subgroup $N=S^k$, $S$ non-abelian
simple. Now $C_G(N)\cap N=Z(N)=1$ since $N$ is a direct product of
non-abelian simple groups; if $C_G(N)$ were nontrivial it would contain
a minimal normal subgroup of $G$, necessarily $N$ by uniqueness,
contradicting $C_G(N)\cap N=1$. So $C_G(N)=1$, and the conjugation
action embeds $G$ into $\Aut(N)=\Aut(S)\wr S_k$ in the standard way;
minimal normality of $N$ forces transitivity on coordinates. Published
references for the two standard
structural inputs used here are \cite[Lemma~2.10, pp.~173--174]{ZhangShi2009}
for the automorphism group of a direct power and
\cite[p.~432]{LucchiniMorini2002} for the resulting transitive wreath
realization in the unique-minimal-normal-subgroup setting. Finally, $G/N$ is
soluble because it is a proper quotient.
\end{proof}

\begin{definition}[Coordinate closure]\label{def:X}
For $G$ as in Proposition~\ref{prop:min} let $G_1$ be the stabilizer of
the first coordinate and $\pi_1\colon G_1\to\Aut(S)$ the projection. Set
$X:=\pi_1(G_1)\cdot\Inn(S)$, the \emph{coordinate closure}. By
coordinate-transitivity all coordinates give the same $X$ up to conjugacy,
and $\Inn(S)\leq X\leq\Aut(S)$.
\end{definition}

\begin{lemma}[Coordinate normalization]\label{lem:coordinate-normalization}
With $X$ as in Definition~\ref{def:X}, the wreath realization can be
conjugated inside $\Aut(S)\wr S_k$ so that
\[
 N\leq G\leq X\wr S_k.
\]
Every component automorphism of every element of the conjugated copy of $G$
then lies in $X$.
\end{lemma}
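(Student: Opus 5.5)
We will prove this with the standard ``transversal twisting'' argument, conjugating by a suitable element of the base group $\Aut(S)^k$. Write elements of $\Aut(S)\wr S_k$ as $(a_1,\dots,a_k)\sigma$ with $a_i\in\Aut(S)$ and $\sigma\in S_k$. By Proposition~\ref{prop:min}, $G$ is transitive on the $k$ coordinates. So we may choose $t_1=1,t_2,\dots,t_k\in G$ with $t_i$ sending coordinate $1$ to coordinate $i$; the precise direction convention is fixed once and used throughout. Let $\tau_i\in\Aut(S)$ be the component of $t_i$ that carries coordinate $1$ to coordinate $i$. We then conjugate by the base element $d=(\tau_1,\dots,\tau_k)$, with inverses placed according to the convention. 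After this conjugation, each chosen transversal element $t_i$ has trivial component at the coordinate it moves from $1$.

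The key step is to compute a general component after the conjugation. Take $g\in G$ and a coordinate $i$ with $i^g=j$. Then $t_i g t_j^{-1}$ lies in $G_1$. Its component at coordinate $1$ is $\pi_1(t_igt_j^{-1})\in\pi_1(G_1)\le X$. Reading off the component of $g$ at coordinate $i$ in the conjugated copy, it equals, up to the normalization of the $t$'s, exactly $\pi_1(t_igt_j^{-1})$. Hence every component of every element of $d^{-1}Gd$ lies in $\pi_1(G_1)\subseteq X$. This is precisely the cocycle/transversal computation behind the standard embedding theorem for imprimitive-type wreath products.

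Next we check that $N$ remains inside, and in fact normal in, the conjugated group. Conjugation by $d$ maps $\Inn(S)^k$ to $\prod_i \Inn(S)^{\tau_i}=\Inn(S)^k$, because $\Inn(S)\trianglelefteq\Aut(S)$. So the image of $N$ is still $\Inn(S)^k\le X^k$. Finally, $X\wr S_k$ is a subgroup because $X\le\Aut(S)$ is a subgroup. Since every element of the conjugated $G$ has all components in $X$ and some permutation part, the conjugated $G$ lies in $X\wr S_k$, giving $N\le G\le X\wr S_k$.

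The main obstacle is purely bookkeeping: getting the left/right action and the composition order of components consistent. We must make sure the transversal normalization really makes $t_i$ componentwise trivial on its moving coordinate, and that the product formula for components of $t_igt_j^{-1}$ telescopes to the component of $g$. We will fix a single convention, with permutations acting on the right and $(a)\sigma\cdot(b)\tau=(a\,b^{\sigma^{-1}})\sigma\tau$, and verify the two-line computation explicitly. We will also note that $X$ is well defined up to $\Aut(S)$-conjugacy, and that the choice of coordinate $1$ does not matter, by Definition~\ref{def:X}.
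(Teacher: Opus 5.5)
Your proposal is correct and is essentially the paper's proof. The paper also fixes transversal elements $g_j$ with $g_1=1$, notes that $g_j^{-1}gg_i$ fixes coordinate $1$ with component $a_j^{-1}ca_i\in\pi_1(G_1)\le X$, and conjugates by the base element $(a_1,\dots,a_k)$; the only differences are that it uses left conjugation instead of your right-action convention, and that your check that $N$ still maps onto $\Inn(S)^k$ is left implicit there.
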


\begin{proof}
Use the left-conjugation maps $c_g(z)=gzg^{-1}$, while retaining
$A^g=g^{-1}Ag$ for subgroup conjugates. If an element sends $S_i$ to $S_j$,
its $i\!\to\!j$ component is the induced automorphism under the fixed
identifications $S_i\cong S\cong S_j$; compositions of these automorphisms
are read in the ordinary right-to-left order. Fix $g_j\in G$ carrying
coordinate $1$ to coordinate $j$, with $g_1=1$, and let $a_j\in\Aut(S)$ be
its $1\!\to\!j$ component. If $g\in G$ carries coordinate $i$ to coordinate
$j$ through the component $c\in\Aut(S)$, then $g_j^{-1}\,g\,g_i$ stabilizes
coordinate $1$, because its left-conjugation map follows
\[
 S_1\xrightarrow{\ c_{g_i}\ }S_i
 \xrightarrow{\ c_g\ }S_j
 \xrightarrow{\ c_{g_j^{-1}}\ }S_1.
\]
Its component is $a_j^{-1}\circ c\circ a_i$, abbreviated
$a_j^{-1}ca_i$, and therefore lies in $\pi_1(G_1)\leq X$. Put
$\delta:=(a_1,\dots,a_k)\in\Aut(S)^k$. For an element with
$i\!\to\!j$ component $c$, the corresponding component after replacing
$G$ by $\delta^{-1}G\delta$ is exactly $a_j^{-1}ca_i\in X$. Thus every
component of every element of the conjugated group lies in $X$, while
$a_1=1$ leaves $X$ itself unchanged. This is the required normalized
realization.
\end{proof}

\begin{convention}[Normalized coordinate model]\label{conv:X}
Henceforth $G$ denotes the conjugate supplied by
Lemma~\ref{lem:coordinate-normalization}. Property $\mathrm{P}$ and all
criteria below are invariant under this replacement. They are also invariant
under replacing $G$ by a conjugate under
$(a,\dots,a)\in\Aut(S)^k$, so statements may be proved after normalizing
$X$ up to $\Aut(S)$-conjugacy.
\end{convention}

\begin{convention}[Coordinate action and arithmetic
notation]\label{conv:coord}
For \emph{coordinate} calculations we fix the following convention.
Write $c_g(z):=gzg^{-1}$ (left conjugation), let $S_i$ be the $i$-th
coordinate factor of $N\cong S^k$, and let $\sigma_g\in S_k$ be the
permutation determined by $c_g(S_j)=S_{\sigma_g(j)}$. There are
component automorphisms $a_{g,i}\in\Aut(S)$ such that
\[
\bigl(c_g(z)\bigr)_i=a_{g,i}\bigl(z_{\sigma_g^{-1}(i)}\bigr)
\qquad(z\in N,\ 1\leq i\leq k):
\]
the value arriving at coordinate $i$ comes from the
\emph{inverse-permuted} source coordinate $\sigma_g^{-1}(i)$. From
$c_{gh}=c_g\circ c_h$ these data compose by
\[
\sigma_{gh}=\sigma_g\sigma_h,\qquad
a_{gh,i}=a_{g,i}\,a_{h,\sigma_g^{-1}(i)} .
\]
After the normalization of Convention~\ref{conv:X}, every $a_{g,i}$
lies in $X$. Throughout, $c_g$ denotes left conjugation on elements, whereas
$A^g=g^{-1}Ag$ denotes conjugation of subgroups; thus
$A^g=c_{g^{-1}}(A)$. Write
\[
x:=|X/\Inn(S)|,\qquad t:=|G/N| .
\]
\end{convention}

\begin{lemma}[Wreath-top quotient divisor]\label{lem:wreath-divisor}
With the normalized coordinate model and notation above,
\[
 t\mid x^k k!.
\]
\end{lemma}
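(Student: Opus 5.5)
We will show that $G/N$ embeds in a group of order $x^k k!$ and then apply Lagrange's theorem. By Lemma~\ref{lem:coordinate-normalization} and Convention~\ref{conv:X}, we have $N\leq G\leq X\wr S_k=X^k\rtimes S_k$. Here $N$ corresponds to the base subgroup $\Inn(S)^k$, which is normal in $X\wr S_k$. Indeed, $\Inn(S)\trianglelefteq X$, and coordinate permutations preserve $\Inn(S)^k$. Hence $G/N$ is a subgroup of $(X\wr S_k)/\Inn(S)^k$.

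The key step is to identify this quotient. We check that
\[
(X\wr S_k)/\Inn(S)^k\cong (X/\Inn(S))\wr S_k ,
\]
or at least that its order is $|X/\Inn(S)|^k\,k!=x^k k!$. This follows from the order computation
\[
|X\wr S_k|/|\Inn(S)|^k=|X|^k k!/|\Inn(S)|^k=x^k k! .
\]
So $t=|G/N|$ divides $x^k k!$ by Lagrange's theorem.

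Alternatively, and closer to the coordinate language of Convention~\ref{conv:coord}, we can give an explicit homomorphism. Define
\[
\phi\colon G\to (X/\Inn(S))\wr S_k,\qquad
g\mapsto \bigl((a_{g,i}\Inn(S))_i,\sigma_g\bigr).
\]
The composition rules $\sigma_{gh}=\sigma_g\sigma_h$ and $a_{gh,i}=a_{g,i}a_{h,\sigma_g^{-1}(i)}$ show that $\phi$ is a homomorphism for the matching wreath multiplication. Its kernel consists of the $g$ with $\sigma_g=1$ and every $a_{g,i}$ inner. Such a $g$ agrees on $N$ with conjugation by some $n\in N$, so $gn^{-1}\in C_G(N)=1$ by Proposition~\ref{prop:min}. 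Hence the kernel is exactly $N$, and $G/N$ embeds in a group of order $x^k k!$.

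The only point needing care is the kernel identification, equivalently that $N$ is precisely $G\cap \Inn(S)^k$. The first route avoids this entirely: $N=\Inn(S)^k$ by Proposition~\ref{prop:min}, and $N\leq G$, so $G\cap\Inn(S)^k=N$. We therefore present the first route as the main proof, using the correspondence theorem for $N\trianglelefteq X\wr S_k$ together with the order count.
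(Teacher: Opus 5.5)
Your proposal is correct and is essentially the paper's proof. The paper also uses the quotient map $\theta\colon X\wr S_k\to(X/\Inn(S))\wr S_k$ with kernel $\Inn(S)^k$, checks that $G\cap\ker\theta=N$, and applies Lagrange; its kernel step is exactly your second route's argument via $C_G(N)=1$. Your first-route observation is also valid: $G\cap\Inn(S)^k=N$ is immediate once $N$ is identified with its image $\Inn(S)^k$, which the normalizing conjugation by an element of $\Aut(S)^k$ preserves, because the faithfulness of the embedding already encodes $C_G(N)=1$.
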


\begin{proof}
The quotient map $X\to X/\Inn(S)$ induces
\[
 \theta\colon X\wr S_k\longrightarrow (X/\Inn(S))\wr S_k
\]
with kernel $\Inn(S)^k$. Under the conjugation embedding of
Proposition~\ref{prop:min}, the subgroup $N=S^k$ acts as
$\Inn(S)^k$, so $N\leq G\cap\ker\theta$. Conversely, if
$g\in G\cap\Inn(S)^k$, choose $n\in N$ inducing the same tuple of inner
automorphisms on $N$. Then $gn^{-1}\in C_G(N)=1$ by
Proposition~\ref{prop:min}, and hence $g=n$. Thus
$G\cap\ker\theta=N$, and $G/N$ embeds in
$(X/\Inn(S))\wr S_k$. Lagrange's theorem gives
\[
 |G/N|\mid |(X/\Inn(S))\wr S_k|=x^k k!,
\]
as required.
\end{proof}

The following observation is not needed for the proof of
Theorem~\ref{thm:main} but explains why the pairs used later are of
``supplement'' type.

\begin{remark}[Top--supplement pairs always factorize]
If $M\supseteq N$ is maximal and $B$ is maximal with $B\not\supseteq N$,
then $BN=G$ ($BN$ is a subgroup strictly containing the maximal $B$),
and since $N\leq M$ we get $MB\supseteq NB=G$. Hence $G$ has property
$\mathrm{P}$ if and only if $G/N$ has property $\mathrm{P}$ and the product
of every pair of non-conjugate maximal subgroups not containing $N$ is $G$.
\end{remark}

\section{Stable classes and maximal product supplements}\label{sec:machinery}

This section identifies the precise hypotheses under which a local class in
$S$ yields a maximal subgroup of every admissible $S^k$-extension. Stability
and self-normalization give the supplement, intersection, and exact order;
poset maximality and normal saturation give maximality. The nonconjugacy
statement and graph-fusion substitute complete the engine recorded in
Corollary~\ref{cor:supplement-engine}.

Fix $G$ as in Conventions~\ref{conv:X} and~\ref{conv:coord}:
$N=S^k\leq G\leq X\wr S_k$,
$t=|G/N|$, $x=|X/\Inn(S)|$. For $V\leq S$ write $[V]$ for its
$S$-conjugacy class. An $S$-class is \emph{$X$-stable} if it is invariant
under the action of $X$,
i.e.\ $V^a$ is $S$-conjugate to $V$ for every $a\in X$ ($\Inn$-stability
being automatic).

\begin{definition}\label{def:poset}
$\Pos_X(S):=\{\,[V] : 1<V<S,\ N_S(V)=V,\ [V]\ X\text{-stable}\,\}$,
partially ordered by containment up to $S$-conjugacy
($[V]\leq[U]$ iff $V\leq U^s$ for some $s\in S$). Call $V$
\emph{normally saturating} if $\langle V^W\rangle=W$ for every overgroup
$V\leq W\leq S$.
\end{definition}

\begin{remark}\label{rem:maxauto}
If $V$ is a \emph{maximal} subgroup of $S$ and $[V]$ is $X$-stable, then
$[V]\in\Pos_X(S)$ (maximal subgroups of a non-abelian simple group are
self-normalizing), $[V]$ is a maximal element of the poset, and $V$ is
normally saturating (its only overgroups are $V$ and $S$, and
$\langle V^S\rangle=S$ by simplicity). Thus the lemmas below apply to
maximal classes with no further hypotheses.
\end{remark}

\begin{lemma}[Product-supplement construction]\label{lem:A}
Let $[V]\in\Pos_X(S)$ and set $B_V:=N_G(V^k)$. Then
\begin{enumerate}
\item $B_V\cap N=N_S(V)^k=V^k$;
\item $B_VN=G$;
\item $|B_V|=t\cdot|V|^k$.
\end{enumerate}
\end{lemma}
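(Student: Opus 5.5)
We prove the three items in the order (1), (2), (3). Items (1) and (2) are local statements about normalizers and stability, and (3) then follows from the product formula. Throughout, $V^k$ means the subgroup $V\times\cdots\times V$ of $N=S^k$, taken in the fixed coordinate identification of Convention~\ref{conv:coord}.

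For (1), note that $B_V\cap N=N_N(V^k)$. An element $n=(n_1,\dots,n_k)\in N$ acts coordinatewise with trivial permutation. Hence $n$ normalizes $V^k$ exactly when each $n_i$ normalizes $V$ in $S$, so $N_N(V^k)=N_S(V)^k$. This equals $V^k$ because $N_S(V)=V$ by the definition of $\Pos_X(S)$.

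For (2), we use a Frattini-type argument on the orbit of $V^k$.
\begin{enumerate}
\item Take $g\in G$. By Convention~\ref{conv:coord}, $c_g(V^k)$ has $i$-th coordinate $a_{g,i}(V)$, where $a_{g,i}\in X$ after the normalization of Convention~\ref{conv:X}. The permutation $\sigma_g$ only reorders identical factors $V$, so it has no further effect.
\item Since $[V]$ is $X$-stable, there is $s_i\in S$ with $a_{g,i}(V)=V^{s_i}$ for each $i$.
\item Thus $c_g(V^k)=(V^k)^{n}$ for $n=(s_1,\dots,s_k)\in N$, which means $c_g(V^k)$ is $N$-conjugate to $V^k$.
\item So $N$ acts transitively on the $G$-conjugacy class of $V^k$. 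The Frattini argument then gives $G=N\,N_G(V^k)=NB_V$.
\end{enumerate}
The only point needing care is the inner identification: $N$ acts on itself through $\Inn(S)^k$ (Proposition~\ref{prop:min}), so $N$-conjugation of $V^k$ is exactly coordinatewise $S$-conjugation. The left/right conjugation conventions need to be matched, but this is bookkeeping.

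For (3), $|G|=|B_VN|=|B_V||N|/|B_V\cap N|$. Substituting $|G|=t|S|^k$, $|N|=|S|^k$ and $|B_V\cap N|=|V|^k$ from (1) gives $|B_V|=t|V|^k$.

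The main obstacle is step (2): one must check that the composite component maps $a_{g,i}$ all lie in $X$, so that stability applies. This is exactly what Lemma~\ref{lem:coordinate-normalization} guarantees. Beyond that, we only need to confirm that the Frattini argument applies to the set of $G$-conjugates of $V^k$, which it does because that set is a single $N$-orbit.
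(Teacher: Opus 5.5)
Your proposal is correct and follows the paper's proof essentially step for step. Item (1) comes from reading off coordinatewise normalizers and using $N_S(V)=V$. Item (2) applies $X$-stability of $[V]$ to the normalized component automorphisms $a_{g,i}\in X$, showing that every $G$-conjugate of $V^k$ is $N$-conjugate to $V^k$; the paper instead writes $(V^k)^g=c_{g^{-1}}(V^k)$ and exhibits $n$ with $gn\in B_V$ directly, which is the same Frattini argument. Item (3) is the product formula.
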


\begin{proof}
(1) An element of $N=S^k$ normalizes $V^k$ iff each coordinate normalizes
$V$. (2) Frattini-type argument: let $g\in G$. In the coordinate-action
notation of Convention~\ref{conv:coord}, $(V^k)^g=c_{g^{-1}}(V^k)$, and
since every coordinate of $V^k$ is $V$,
\[
(V^k)^g=\prod_{i=1}^{k} a_{g^{-1},i}(V),
\]
each factor is the image of $V$ under a component automorphism
$a_{g^{-1},i}\in X$. By $X$-stability there are $s_i\in S$ with
$a_{g^{-1},i}(V)=V^{s_i}$. So $(V^k)^g$ is $N$-conjugate to
$V^k$: there is $n\in N$ with $(V^k)^{gn}=V^k$, i.e.\ $gn\in B_V$, so
$g\in B_VN$. (3) $|B_V|=|B_VN|\cdot|B_V\cap N|/|N|=t\,|V|^k$.
\end{proof}

\begin{lemma}[Maximality of stable product normalizers]\label{lem:B}
If in addition $[V]$ is a maximal element of $\Pos_X(S)$ and $V$ is
normally saturating, then $B_V$ is maximal in $G$.
\end{lemma}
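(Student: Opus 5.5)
Let $M$ be a subgroup with $B_V\leq M\leq G$. We show $M\in\{B_V,G\}$. By Lemma~\ref{lem:A}(2), $M=B_V(M\cap N)$, so $M$ is determined by $L:=M\cap N$. We have $V^k\leq L\leq N$, and $L$ is normalized by $B_V$. The plan is to show that $L$ equals $V^k$ or $N$. Then $M=B_V$ or $M=B_VN=G$.

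\textbf{Step 1: reduce to the coordinate projections.} Let $L_i:=\pi_i(L)\leq S$ be the projection to coordinate $i$, so $V\leq L_i$. Since $B_V$ acts transitively on the coordinates, all the $L_i$ are related by the component automorphisms. Indeed, $B_VN=G$ is transitive on coordinates and $N$ fixes each coordinate, so $B_V$ is transitive. In more detail, if $b\in B_V$ sends coordinate $1$ to coordinate $i$, then its component $a\in X$ satisfies $a(L_1)=L_i$ and $a(V)=V$.

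\textbf{Step 2: show each $L_i$ is $V$ or $S$.} Put $U:=L_1$, so $V\leq U\leq S$. Consider the overgroup $\widehat U:=N_S(U)\geq V$ and the class $[\widehat U]$, or a suitable self-normalizing stable hull of $U$. The elements of $B_V$ stabilizing coordinate $1$ project onto a subgroup $Y\leq X$ with $Y\,\Inn(S)\supseteq$ enough of $X$; by Lemma~\ref{lem:A}, $Y$ contains $V$ and normalizes $U$. I would show that the class of the chain $V\leq U$ gives a stable class in $\Pos_X(S)$ lying above $[V]$. Maximality of $[V]$ then forces $U=V$ or $U=S$. This is the main obstacle: self-normalization and $X$-stability of the candidate overgroup class must be checked. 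My first attempt takes $U^*:=N_S(U)$ and argues that $X$-stability holds because every element of $X$ is realized modulo $\Inn(S)$ by some element of $(B_V)_1$, using the Frattini argument of Lemma~\ref{lem:A}(2) restricted to the coordinate stabilizer. Failing that, I would iterate normalizers up to the normalizer tower's terminal term.

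\textbf{Step 3: from projections to $L$.} If $L_1=V$, then $L\leq V^k$, so $L=V^k$. Otherwise every $L_i=S$, and $L$ is a subdirect subgroup of $S^k$ containing $V^k$. The coordinate kernels $K_i:=L\cap S_i$ are normal in $L_i=S$ and contain $V$. Therefore $K_i$ contains $\langle V^{S}\rangle=S$ by normal saturation (applied with $W=S$), and so $L=N$. Normal saturation at intermediate $W$ is what is needed in Step 2, when $U$ is intermediate: there $L\cap S_1\trianglelefteq U$ contains $V$, hence contains $\langle V^U\rangle=U$, so $L=\prod L_i$ is a genuine product of conjugates of $U$. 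This is what allows Step 2 to treat $U$ as a coordinate overgroup class.
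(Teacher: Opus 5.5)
Your proposal is correct and is essentially the paper's proof. The Frattini/Dedekind step gives $\pi_1\bigl((B_V)_1\bigr)\Inn(S)=X$, so the first-coordinate projection $U$ of $M\cap N$ has an $X$-stable class. The normalizer tower together with poset maximality then forces $U\in\{V,S\}$. You do need the tower rather than your first attempt, since $N_S(U)$ need not be self-normalizing. The one detail to write out is that for $U<S$ the tower stops at a proper subgroup: otherwise its penultimate term would be a proper normal subgroup of $S$ containing $V>1$.

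Your closing remark overstates what your Step~3 needs. When $U=V$ you get $L\leq\prod_i L_i=V^k$ directly, and when $U=S$ only the simplicity of $S$ is used, so saturation at intermediate overgroups is never actually invoked. The paper uses saturation to write $H\cap N=\prod_i W_i$ first, but that step is dispensable for the same reason.
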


\begin{proof}
Let $B_V\leq H\leq G$. Since $H\supseteq B_V$ and $B_VN=G$ we have
$HN=G$, so $H$ surjects onto $G/N$ and in particular acts
coordinate-transitively.

\emph{Step 1 (the coordinate components of $H$ recover $X$).} Let $H_1,G_1$ be the
coordinate-$1$ stabilizers. For $g\in G_1$ write $g=hn$ ($h\in H$,
$n\in N$); $n$ fixes every coordinate, hence so does $h$, so $h\in H_1$
and $\pi_1(G_1)=\pi_1(H_1)\cdot\pi_1(N\cap G_1)=\pi_1(H_1)\cdot\Inn(S)$.
Therefore $\pi_1(H_1)$ and $\Inn(S)$ together generate $X$, and a class
of subgroups of $S$ is $X$-stable iff it is stable under $\pi_1(H_1)$.

\emph{Step 2 (coordinate kernels and saturation: $H\cap N$ is a product).} Let
$T:=H\cap N\supseteq V^k$, let $W_i:=\pi_i(T)\leq S$ be the coordinate
projections and $K_i:=T\cap S_i$ the coordinate kernels ($S_i$ the $i$-th
factor). Then $V\leq K_i$ (as $V^k\cap S_i=V$ in coordinate $i$),
$K_i\trianglelefteq W_i$, and $V\leq W_i$. Since $K_i$ is a normal
subgroup of $W_i$ containing $V$, it contains
$\langle V^{W_i}\rangle=W_i$ by saturation. So $K_i=W_i$ for every $i$
and $T=\prod_i W_i$.

\emph{Step 3 (the $W_i$ force $H=B_V$ or $H=G$).} $T$ is $H$-invariant.
For $h\in H_1$, projecting $T^h=T$ to the first coordinate gives
$W_1^{\pi_1(h)}=W_1$, so $[W_1]$ is $\pi_1(H_1)$-stable, hence
$X$-stable by Step~1. For any $i$, coordinate-transitivity of $H$
supplies $h\in H$ carrying coordinate $i$ to coordinate $1$ through a
component $c\in X$, and projecting $T^h=T$ gives $W_1=W_i^{\,c}$;
applying $X$-stability of $[W_1]$ to $c^{-1}$ gives $[W_i]=[W_1]$.
Write $[W]$ for this common $X$-stable class.
If $W=S$ then $T=N$, so $H\supseteq N$ and $H=HN=G$.
Otherwise $1<V\leq W<S$. Form the normalizer tower
$W\leq N_S(W)\leq N_S(N_S(W))\leq\cdots$; it is strictly increasing
until it becomes self-normalizing, and it cannot reach $S$ (the
penultimate term would be a proper normal subgroup of the simple group
$S$). Its terminal member $U$ is proper, self-normalizing, and $[U]$ is
$X$-stable ($N_S(W^a)=N_S(W)^a$, so stability propagates up the tower),
i.e.\ $[U]\in\Pos_X(S)$ with $[U]\geq[W]\geq[V]$. Maximality of $[V]$ in
$\Pos_X(S)$ forces $[U]=[V]$; then $|U|=|V|$ and $V\leq W\leq U$ gives
$V=W=U$. Since $V\leq W_i$ by Step~2 and $[W_i]=[W]$ gives
$|W_i|=|W|=|V|$, each $W_i=V$; so $T=\prod_i W_i=V^k$, hence
$|H|=|HN|\cdot|H\cap N|/|N|=t\,|V|^k=|B_V|$
and $H=B_V$.
\end{proof}

\begin{lemma}[Nonconjugacy of distinct stable classes]\label{lem:C}
Distinct $[V]\neq[W]$ in $\Pos_X(S)$ give non-conjugate $B_V,B_W$.
\end{lemma}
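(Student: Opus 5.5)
Suppose $B_W=B_V^g$ for some $g\in G$; we aim to show $[V]=[W]$. Intersecting with the normal subgroup $N$ and using Lemma~\ref{lem:A}(1) for both classes gives
\[
W^k=B_W\cap N=(B_V\cap N)^g=(V^k)^g .
\]
The plan is to read off the first coordinate of this equality, using the coordinate-action notation of Convention~\ref{conv:coord}.

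As in the proof of Lemma~\ref{lem:A}(2), we have
\[
(V^k)^g=c_{g^{-1}}(V^k)=\prod_{i=1}^k a_{g^{-1},i}(V).
\]
The $i$-th coordinate of this product is $a_{g^{-1},i}(V)$. This holds because every coordinate of $V^k$ equals $V$, so the source coordinate $\sigma_{g^{-1}}^{-1}(i)$ does not matter. Comparing coordinate $1$ then gives
\[
W=a_{g^{-1},1}(V), \qquad a_{g^{-1},1}\in X,
\]
where membership in $X$ comes from the normalization of Convention~\ref{conv:X}.

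Since $[V]$ is $X$-stable, there is $s\in S$ with $a_{g^{-1},1}(V)=V^s$. Hence $W=V^s$ and $[W]=[V]$, contradicting the hypothesis $[V]\neq[W]$. Therefore $B_V$ and $B_W$ are not conjugate in $G$.

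The only point needing care is identifying $B_V\cap N$ exactly as $V^k$ rather than some larger subgroup. This uses self-normalization, $N_S(V)=V$, through Lemma~\ref{lem:A}(1); without it, conjugacy of the normalizers would only relate $N_S(V)$ and $N_S(W)$. Everything else is bookkeeping with the coordinate conventions. Maximality of $[V]$ and $[W]$ in $\Pos_X(S)$ is not needed for this argument.
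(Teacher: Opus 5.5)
Your proof is correct and follows the paper's argument: intersect with $N$ to get $(V^k)^g=W^k$ via Lemma~\ref{lem:A}(1), read off a coordinate to see that $W$ is the image of $V$ under a component automorphism lying in $X$, and conclude $[W]=[V]$ by $X$-stability. The only difference is that you make the coordinate bookkeeping explicit through Convention~\ref{conv:coord} at coordinate $1$, whereas the paper simply reads off an arbitrary coordinate.
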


\begin{proof}
If $B_V^g=B_W$ then $(B_V\cap N)^g=B_W\cap N$ (as $N\trianglelefteq G$),
i.e.\ $(V^k)^g=W^k$. Reading off any coordinate, $W$ is the image of $V$
under an element of $X$ composed with a coordinate permutation, so
$[W]=[V^a]=[V]$ for some $a\in X$ by $X$-stability, a contradiction.
\end{proof}

The next lemma supplies, structurally, all hypotheses of
Lemma~\ref{lem:B} for the non-maximal flag-parabolic substitutes used in
Appendix~\ref{app:families}: the standard parabolic subgroups whose maximal
parabolic overgroups are fused by a graph automorphism. We use the
notation of \cite{Carter}. Here $S$ is an untwisted simple group of Lie
type with a split $BN$-pair of rank $\ell\geq2$; all our applications
are untwisted. Write $B=U\rtimes T$ for a Borel subgroup, with unipotent
radical $U$ and maximal torus $T$, and let $\Delta$ be the node set of
the Dynkin diagram. For $J\subseteq\Delta$, let $Q_J\supseteq B$ be the
standard parabolic whose Levi subgroup $L_J$ is generated by $T$ and the
root subgroups corresponding to $J$. The maximal parabolic omitting node
$i$ is $P_i=Q_{\Delta\setminus\{i\}}$.

\begin{lemma}[Flag-parabolic stability under graph fusion]\label{lem:P}
Fix $J\subseteq\Delta$ and put $V:=Q_J$. Suppose $X$ is such that
\begin{enumerate}
\item[(a)] the $S$-class $[Q_J]$ is $X$-stable, and
\item[(b)] for every $J\subsetneq K\subsetneq\Delta$ the class $[Q_K]$
is \emph{not} $X$-stable.
\end{enumerate}
Then $[V]\in\Pos_X(S)$, $[V]$ is a maximal element of $\Pos_X(S)$, and
$V$ is normally saturating. Consequently Lemmas~\ref{lem:A}--\ref{lem:C}
apply to $V$ with no computational hypothesis.
\end{lemma}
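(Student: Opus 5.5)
We will verify the three conclusions of the lemma in turn, relying on standard $BN$-pair facts from \cite{Carter}.

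\textbf{Self-normalization and membership in $\Pos_X(S)$.} Parabolic subgroups of a group with a $BN$-pair are self-normalizing, so $N_S(Q_J)=Q_J$. We also need $1<Q_J<S$. Since $Q_J\supseteq B\neq 1$, the lower bound holds. The upper bound follows once $J\neq\Delta$; this is implicit, because (b) must be non-vacuous, and if $J=\Delta$ then $V=S$. Combined with hypothesis (a), this gives $[V]\in\Pos_X(S)$.

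\textbf{Normal saturation.} Let $Q_J\leq W\leq S$. By Tits' theorem, every overgroup of $B$ is a standard parabolic, so $W=Q_K$ for some $K\supseteq J$. We then show $\langle Q_J^{Q_K}\rangle=Q_K$. The normal closure $M$ of $Q_J$ in $Q_K$ contains $B$, so $M$ is itself a standard parabolic $Q_{K'}$ with $J\subseteq K'\subseteq K$. A normal parabolic subgroup of $Q_K$ must contain all conjugates of $B$ under the Weyl group representatives $n_\alpha$ for $\alpha\in K$. Since $B\cap B^{n_\alpha}$ is proper in $B$, we have $B^{n_\alpha}\not\le B$. Hence $\langle B,B^{n_\alpha}\rangle$ contains the rank-one Levi root subgroups $X_{\pm\alpha}$, which forces $\alpha\in K'$. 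Therefore $K'=K$, as required.

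\textbf{Maximality in $\Pos_X(S)$.} This is the main obstacle, because elements of $\Pos_X(S)$ need not be parabolic. Suppose $[Q_J]\leq[U]$ with $[U]\in\Pos_X(S)$. Conjugating within the class of $U$, we may assume $Q_J\leq U$. As before, Tits' theorem gives $U=Q_K$ with $J\subseteq K\subsetneq\Delta$, the inclusion being proper because $U<S$. Since $[U]$ is $X$-stable, hypothesis (b) excludes $J\subsetneq K$. Hence $K=J$, so $[U]=[V]$ and $[V]$ is maximal. The one subtlety is that $[Q_K]$, viewed as an $S$-class, is exactly the class whose stability (b) forbids. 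This holds because distinct subsets $K$ give non-conjugate parabolics: conjugate standard parabolics are equal.

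With all three properties established, Lemmas~\ref{lem:A}--\ref{lem:C} apply directly to $V$.
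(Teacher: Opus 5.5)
Your proof is correct. For membership in $\Pos_X(S)$ and for poset-maximality it is exactly the paper's argument: any $U$ with $Q_J\le U<S$ contains $B$, hence equals some $Q_K$ with $J\subseteq K\subsetneq\Delta$, and (b) rules out $K\neq J$.

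You differ from the paper in the normal-saturation step. The paper passes to the Levi decomposition $W=U_K\rtimes L_K$ and uses $U_K\le B$. It then shows that the normal closure of the Borel subgroup $B_L$ in $L_K$ contains $T$, $U_\alpha$ and $U_{-\alpha}=U_\alpha^{\,n_\alpha}$ for all $\alpha\in K$, which generate $L_K$. You instead apply Tits' overgroup theorem a second time: the normal closure contains $B$, so it is some $Q_{K'}$ with $J\subseteq K'\subseteq K$. You then check node by node that each $\alpha\in K$ lies in $K'$, because $B^{n_\alpha}\le Q_{K'}$. Your route avoids the Levi decomposition and the root-subgroup generation of $L_K$, and the node check can be done from the $BN$-pair axioms alone. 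The paper's route is more explicit about which elements generate $W$, but it relies on the Chevalley-group structure.

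Two small corrections.

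\emph{Why $\alpha\in K'$.} The reason $B^{n_\alpha}\le Q_{K'}$ forces $\alpha\in K'$ is not that $B^{n_\alpha}\not\le B$. It is that $U_{-\alpha}=U_\alpha^{\,n_\alpha}\le B^{n_\alpha}$. Axiomatically, $s_\alpha Bs_\alpha\subseteq B\cup Bs_\alpha B$ and $s_\alpha Bs_\alpha\neq B$ give $B^{n_\alpha}\cap Bs_\alpha B\neq\varnothing$, hence $s_\alpha\in W_{K'}$.

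\emph{Why $J\neq\Delta$.} This does not follow from (b) being ``non-vacuous''. Condition (b) is equally vacuous for every $J=\Delta\setminus\{i\}$, which is a legitimate case. Rather, $J\subsetneq\Delta$ is a tacit hypothesis of the lemma, which the paper also leaves implicit. Without it the conclusion fails, since $Q_\Delta=S$.
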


\begin{proof}
Parabolic subgroups are self-normalizing
\cite[Thm.~8.3.3, p.~112]{Carter}, so
$[V]\in\Pos_X(S)$ by (a).

\emph{Overgroups.} Every subgroup containing $V$ contains $B$, and every
subgroup containing a Borel subgroup is a parabolic $Q_K$, $K\supseteq J$,
of the same $BN$-pair \cite[Thm.~8.3.2, p.~112]{Carter}; moreover $Q_K$
and $Q_{K'}$ are $S$-conjugate iff $K=K'$
\cite[Thm.~8.3.3, p.~112]{Carter}.

\emph{Poset-maximality.} If $[U]\in\Pos_X(S)$ with $V\leq U^s<S$ for some
$s$, then $U^s=Q_K$ for some $J\subseteq K\subsetneq\Delta$; $K=J$ gives
$[U]=[V]$, while $J\subsetneq K$ contradicts (b), since $[U]=[Q_K]$ would
be $X$-stable.

\emph{Saturation.} Let $V\leq W\leq S$, so $W=Q_K$ ($K\supseteq J$) or
$W=S$; we must show $\langle V^W\rangle=W$. The normal closure of $V$ in
$W$ contains that of $B$. Write $W=U_K\rtimes L_K$ (Levi decomposition;
for $W=S$ read $U_K=1$, $L_K=S$). Then $U_K\leq B$, and modulo $U_K$ the
image of $B$ is a Borel subgroup $B_L=T\cdot(U\cap L_K)$ of $L_K$
containing the full torus $T$. For each node $\alpha\in K$ the
reflection representative $n_\alpha$ lies in $L_K$, and
$U_{-\alpha}=U_\alpha^{\,n_\alpha}$. The subgroup
$\langle B_L^{\,L_K}\rangle$ contains every $L_K$-conjugate of every
subgroup of $B_L$; in particular it contains $T$, each $U_\alpha$ for
$\alpha\in K$, and each $U_{-\alpha}=U_\alpha^{\,n_\alpha}$.
These generate $L_K$ by the root-subgroup description of the Levi
decomposition \cite[\S8.5, pp.~118--120]{Carter}. So
$\langle B^W\rangle\supseteq U_K\cdot L_K=W$.
\end{proof}

\begin{corollary}[Stable product-supplement engine]
\label{cor:supplement-engine}
Let $[V]\neq[W]$ be maximal elements of $\Pos_X(S)$ and suppose that both
$V$ and $W$ are normally saturating. Then
\[
 B_V=N_G(V^k),\qquad B_W=N_G(W^k)
\]
are non-conjugate maximal supplements of $N$, with
\[
 B_V\cap N=V^k,\quad B_W\cap N=W^k,\quad
 |B_V|=t|V|^k,\quad |B_W|=t|W|^k.
\]
Either or both of the selected classes may instead be represented by a flag
parabolic satisfying the hypotheses of Lemma~\ref{lem:P}; the same conclusion
then holds for the resulting pair.
\end{corollary}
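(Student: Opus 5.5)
The corollary is an assembly of Lemmas~\ref{lem:A}, \ref{lem:B}, \ref{lem:C} and~\ref{lem:P}, so the plan is to check each hypothesis and read off the conclusions in order.

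\emph{Supplements, intersections and orders.} By assumption $[V]$ and $[W]$ lie in $\Pos_X(S)$; each is maximal there, so in particular each is $X$-stable and self-normalizing. Lemma~\ref{lem:A} therefore applies to each class separately. It gives $B_V\cap N=N_S(V)^k=V^k$ and $B_VN=G$, so $B_V$ supplements $N$, and $|B_V|=t|V|^k$. The same statements hold for $W$.

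\emph{Maximality and nonconjugacy.} Since $[V]$ is maximal in $\Pos_X(S)$ and $V$ is normally saturating, Lemma~\ref{lem:B} gives that $B_V$ is maximal in $G$; likewise for $B_W$. Because $[V]\neq[W]$ are both in $\Pos_X(S)$, Lemma~\ref{lem:C} shows that $B_V$ and $B_W$ are not conjugate in $G$. This proves the displayed conclusion.

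\emph{Flag-parabolic substitutes.} Suppose one or both classes is represented by a flag parabolic $Q_J$ satisfying (a) and (b) of Lemma~\ref{lem:P}. That lemma shows $[Q_J]\in\Pos_X(S)$, that $[Q_J]$ is a maximal element of $\Pos_X(S)$, and that $Q_J$ is normally saturating. These are exactly the hypotheses used above, so the argument runs unchanged. The one remaining requirement is that the two chosen classes are distinct, which is part of the hypothesis on the selected pair. For two flag parabolics $Q_J$ and $Q_{J'}$ with $J\neq J'$, distinctness also follows from the conjugacy criterion cited in the proof of Lemma~\ref{lem:P}: $Q_J$ and $Q_{J'}$ are $S$-conjugate iff $J=J'$.

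I do not expect a genuine obstacle, since every step is a direct citation. The only point to watch is that Lemma~\ref{lem:C} needs both classes to lie in the same poset $\Pos_X(S)$ for the same $X$. Here the parabolic substitutes supply that membership through Lemma~\ref{lem:P}, and membership for the other class is part of the hypothesis.
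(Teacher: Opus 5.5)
Your proposal is correct and is essentially the paper's own proof. Lemmas~\ref{lem:A}--\ref{lem:C} supply the supplement property, the intersections $V^k$ and $W^k$, the orders, maximality and nonconjugacy, while Lemma~\ref{lem:P} supplies membership in $\Pos_X(S)$, poset-maximality and normal saturation for any flag-parabolic substitute; your remark that $J\neq J'$ gives distinct classes is a harmless addition to the stated hypothesis $[V]\neq[W]$.
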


\begin{proof}
Apply Lemmas~\ref{lem:A}--\ref{lem:C}; Lemma~\ref{lem:P} supplies the
stable-poset maximality and normal-saturation hypotheses in the flag case.
\end{proof}

The corollary isolates the exact output needed below: two non-conjugate
maximal supplements and their orders. No family-specific classification data
enters this engine.

\section{The stable-class divisibility criterion}\label{sec:criterion}

The purpose of this section is to convert the supplement engine into the
uniform obstruction that drives the paper. The key comparison is between a
missing $p$-part repeated in every socle coordinate and the smaller $p$-budget
available in the wreath top.

For a prime $p$, $\vp{p}$ denotes the $p$-adic valuation and $s_p(k)$ the
digit sum of $k$ in base $p$, so that $\vp{p}(k!)=(k-s_p(k))/(p-1)$.

\begin{theorem}[Uniform stable-class divisibility criterion]\label{thm:D}
Let $S$ be non-abelian simple, let
$\Inn(S)\leq X\leq\Aut(S)$, and put $x:=|X/\Inn(S)|$. Let
$[V]\neq[W]$ be maximal elements of $\Pos_X(S)$, both normally saturating.
If some prime $p$ satisfies
\[
\eta=\eta_p(V,W):=\vp{p}(|S|)-\vp{p}(|V|)-\vp{p}(|W|)\;>\;\vp{p}(x),
\]
then, for every $k\geq2$, no finite group $G$ with unique minimal normal
subgroup $N\cong S^k$, whose action on the factors of $N$ has coordinate
closure $X$, has property $\mathrm{P}$.
\end{theorem}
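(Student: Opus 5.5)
Suppose, for a contradiction, that some $G$ as in the statement has property $\mathrm{P}$ for some $k\geq2$. First we put $G$ in the normalized coordinate model. The hypotheses already give $C_G(N)=1$: a nontrivial $C_G(N)$ would contain a minimal normal subgroup, which must be $N$, but $N\cap C_G(N)=Z(N)=1$. So $G$ embeds in $\Aut(S)\wr S_k$ transitively, and by Lemma~\ref{lem:coordinate-normalization} we may assume $N\leq G\leq X\wr S_k$. Minimality of $|G|$ and solubility of $G/N$ are not needed here. All of Section~\ref{sec:machinery} only uses this model, and Lemma~\ref{lem:wreath-divisor} gives $t=|G/N|$ dividing $x^kk!$.

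Next we apply Corollary~\ref{cor:supplement-engine} to the classes $[V]\neq[W]$. This gives non-conjugate maximal subgroups $B_V,B_W$ with $B_V\cap N=V^k$, $B_W\cap N=W^k$, $|B_V|=t|V|^k$ and $|B_W|=t|W|^k$. Property $\mathrm{P}$ then forces $G=B_VB_W$, so
\[
 t|S|^k=|G|=\frac{|B_V||B_W|}{|B_V\cap B_W|}=\frac{t^2|V|^k|W|^k}{|B_V\cap B_W|}.
\]
We bound $|B_V\cap B_W|$ from below. Since $(B_V\cap B_W)\cap N=V^k\cap W^k=(V\cap W)^k$, we get $|B_V\cap B_W|\geq|V\cap W|^k\geq1$. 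Using only $|B_V\cap B_W|\geq 1$ and taking $p$-adic valuations gives
\[
 k\,\vp{p}(|S|)+\vp{p}(t)\leq 2\vp{p}(t)+k\bigl(\vp{p}(|V|)+\vp{p}(|W|)\bigr),
\]
that is, $k\eta\leq\vp{p}(t)$. The cleaner route is to note $|B_V||B_W|/|G|=|B_V\cap B_W|$ is an integer, so $\vp{p}(t|S|^k)\leq\vp{p}(t^2|V|^k|W|^k)$, which is the same inequality.

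Now compare this with the top budget. Lemma~\ref{lem:wreath-divisor} gives
\[
 \vp{p}(t)\leq k\,\vp{p}(x)+\vp{p}(k!)=k\,\vp{p}(x)+\frac{k-s_p(k)}{p-1}.
\]
Hence $k(\eta-\vp{p}(x))\leq (k-s_p(k))/(p-1)<k$. Because $\eta$ and $\vp{p}(x)$ are integers, the hypothesis gives $\eta-\vp{p}(x)\geq1$, so the left side is at least $k$. This is a contradiction for every $k\geq2$; indeed the argument works for every $k\geq1$.

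The main obstacle is only checking that the engine applies with exactly the stated coordinate closure $X$. We must make sure that the realization of $G$ from the hypotheses, without assuming minimality, still satisfies Definition~\ref{def:X} and Convention~\ref{conv:X}, so that Lemmas~\ref{lem:A}--\ref{lem:C} hold. We must also confirm that the strict bound $\vp{p}(k!)<k/(p-1)\leq k$ is what closes the argument. Once that is settled, the remaining valuation step is routine.
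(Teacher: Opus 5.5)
Your proposal is correct and matches the paper's proof: the supplement engine gives non-conjugate maximal $B_V,B_W$ with $|B_V|=t|V|^k$ and $|B_W|=t|W|^k$; property $\mathrm{P}$ forces $G=B_VB_W$; integrality of $|B_V\cap B_W|=t(|V||W|/|S|)^k$ gives $p^{\eta k}\mid t$; and this contradicts $\vp{p}(t)\le k\vp{p}(x)+\vp{p}(k!)<k(\vp{p}(x)+1)\le k\eta$. You also check explicitly that $C_G(N)=1$ and coordinate transitivity follow from uniqueness of $N$ alone, so the normalized model applies without minimality, which the paper leaves implicit; one small correction is that the valuation inequality comes from integrality of $|B_V\cap B_W|$, not from $|B_V\cap B_W|\ge1$, as your ``cleaner route'' already says.
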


\begin{proof}
Conjugate the wreath realization as in
Lemma~\ref{lem:coordinate-normalization}; then
Conventions~\ref{conv:X}--\ref{conv:coord} and
Lemmas~\ref{lem:A}--\ref{lem:C} apply.
In particular, $B_V$ and $B_W$ are non-conjugate
maximal subgroups of $G$, so property $\mathrm{P}$ forces $G=B_VB_W$ and
\[
|B_V\cap B_W|=\frac{|B_V|\,|B_W|}{|G|}
 = t\cdot\Bigl(\frac{|V|\,|W|}{|S|}\Bigr)^{k}\in\Z ,
\]
whence $p^{\eta k}\mid t$. But
Lemma~\ref{lem:wreath-divisor} gives
\[
\vp{p}(t)\;\leq\;k\,\vp{p}(x)+\vp{p}(k!)
 \;=\;k\,\vp{p}(x)+\frac{k-s_p(k)}{p-1}
 \;<\;k\bigl(\vp{p}(x)+1\bigr)\;\leq\;\eta k ,
\]
a contradiction.
\end{proof}

\begin{example}[The criterion for $A_5$]\label{ex:a5}
Let $S=A_5$. For every $\Inn(S)\leq X\leq\Aut(S)$, the classes of
$V=A_4$ and $W\cong S_3$ are $X$-stable: they are the unique classes of
maximal subgroups of orders $12$ and $6$, respectively. Thus they satisfy the
structural hypotheses of Theorem~\ref{thm:D}. With $p=5$,
\[
 \eta_5(V,W)=\vp{5}(60)-\vp{5}(12)-\vp{5}(6)=1.
\]
Since $|X/\Inn(S)|$ divides $|\Out(A_5)|=2$, its $5$-valuation is zero.
Theorem~\ref{thm:D} therefore excludes every admissible group with socle
$A_5^k$ at once, for all $k\geq2$. This small example displays the complete
workflow: stable maximal classes, one obstruction prime, one strict gap, and
no enumeration over $k$.
\end{example}

\begin{remark}\label{rem:crit}
(i) The criterion is uniform in $k$, and this uniformity handles the
infinite families. Subgroup sizes alone cannot yield a uniform contradiction:
using $t\leq x^k k!$, the natural upper bound for
$|B_V||B_W|/|G|$ is
\[
 \left(x\frac{|V||W|}{|S|}\right)^k k!,
\]
whose $k$-th root is
$x|V||W|\,(k!)^{1/k}/|S|$ and therefore diverges. The valuation gap is
essential. (ii) Only the \emph{existence} of
$B_V,B_W$ is used; no classification of the maximal subgroups of $G$
(or even of $S$) is needed. In particular the criterion is insensitive
to completeness of tabulated maximal-subgroup data: it needs only that
the two exhibited subgroups are maximal, or are flag-parabolic substitutes covered by
Lemma~\ref{lem:P}, and that their classes are stable. (Machine
\emph{verification} of stability for
the finite base does use enumerated subgroup data; see the trust
boundary in Appendix~\ref{app:trust}.)
\end{remark}

A related monolithic-coordinate construction appears in the study of finite
groups whose maximal subgroups have exact complements \cite{Sater1868}.
There the obstruction is not two-supplement integrality: a coordinate action
is lifted to a primitive product action, and the absence of a regular subgroup
excludes a complement. The wreath-top valuation budget is shared preliminary
machinery for two otherwise different arguments; neither proof reduces to the
other criterion.

The criterion has a $k=1$ counterpart, which recovers the almost
simple case.

\begin{theorem}[$k=1$: the almost simple case]\label{thm:Dprime}
Let $S\leq G\leq\Aut(S)$ with $S$ non-abelian simple, and put $X:=G$,
so that $x:=|G/S|=t$. Let $[V]\neq[W]$ be maximal elements of
$\Pos_X(S)$, both normally saturating, and set $B_V:=N_G(V)$,
$B_W:=N_G(W)$. Then:
\begin{enumerate}
\item $B_V\cap S=V$, $B_VS=G$, and $|B_V|=t|V|$;
\item $B_V$ and $B_W$ are non-conjugate maximal subgroups of $G$;
\item if $\eta=\vp{p}(|S|)-\vp{p}(|V|)-\vp{p}(|W|)>\vp{p}(x)$ for some
prime $p$, then $G$ does not have property $\mathrm{P}$.
\end{enumerate}
\end{theorem}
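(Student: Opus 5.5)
Read the almost simple group $S\leq G\leq\Aut(S)$ as the case $k=1$ of the machinery of Section~\ref{sec:machinery}: $N=S$, $t=|G/S|$, and the coordinate closure is $X=G$ itself. No wreath normalization is needed, since the only component of $g\in G$ is $g$ acting by conjugation. I will check that the proofs of Lemmas~\ref{lem:A}, \ref{lem:B} and~\ref{lem:C} go through verbatim with $k=1$, and then replace the valuation comparison of Theorem~\ref{thm:D} by a direct one.

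For (1), $B_V\cap S=N_S(V)=V$ because $[V]\in\Pos_X(S)$ forces self-normalization. The Frattini argument gives $B_VS=G$: for $g\in G$, $X$-stability yields $s\in S$ with $V^g=V^{s^{-1}}$, so $gs\in N_G(V)=B_V$. Then $|B_V|=|G||V|/|S|=t|V|$.

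For (2), maximality is Lemma~\ref{lem:B} with $k=1$. Given $B_V\leq H\leq G$, we have $HS=G$, so the components of $H$ generate $X$ together with $\Inn(S)$. Put $T=H\cap S\supseteq V$. Step~2 is now vacuous, since $T=W_1=K_1$. Next, $[T]$ is $H$-stable, hence $X$-stable, because $T\trianglelefteq H$ and $HS=G$. If $T<S$, the normalizer tower above $T$ ends at a self-normalizing proper subgroup $U$ with $[U]\in\Pos_X(S)$ and $[U]\geq[V]$. Poset maximality then forces $U=T=V$, so $|H|=t|V|$ and $H=B_V$. Otherwise $H\supseteq S$ and $H=G$. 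Saturation is not even needed here. Nonconjugacy follows as in Lemma~\ref{lem:C}: if $B_V^g=B_W$, intersecting with $S$ gives $V^g=W$, so $[W]=[V]$ by $X$-stability, a contradiction.

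For (3), if $G$ had property $\mathrm{P}$, then (2) would force $G=B_VB_W$, and so
\[
|B_V\cap B_W|=\frac{t^2|V||W|}{t|S|}=t\,\frac{|V||W|}{|S|}\in\Z .
\]
Hence $\vp{p}(t)\geq\eta$. But $t=x$, so $\vp{p}(x)\geq\eta$, contradicting the hypothesis $\eta>\vp{p}(x)$.

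The main point to verify carefully is the maximality step in (2). The $X$-stability of $[H\cap S]$ must come from $HS=G$ with $X=G$, and the tower argument must not leave $\Pos_X(S)$. Everything else is a direct specialization of Lemma~\ref{lem:A} and the order count. Since the $k!$ term is absent, the inequality in (3) needs no digit-sum estimate.
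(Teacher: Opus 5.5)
Your proposal is correct and follows the paper's own proof essentially step for step. Both use the Frattini argument via $X$-stability for (1), and both prove maximality by making $[H\cap S]$ $G$-stable through $HS=G$ and running the normalizer tower, with saturation unused, as the paper also remarks. Nonconjugacy comes from intersecting with $S$, and (3) from integrality of $t|V||W|/|S|$ with $t=x$. The only cosmetic difference is the last step of maximality: you get $H=B_V$ from the order count $|H|=t|V|$, whereas the paper gets $H\leq N_G(V)=B_V$ directly from $T=V\trianglelefteq H$.
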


The proof is recorded in Appendix~\ref{app:almost-simple}. It is retained as
a comparison with the $k\geq2$ mechanism and is not used in the proof of the
main theorem.

\begin{convention}\label{conv:excluded}
From now on, ``$S$ is \emph{excluded}'' means: for every $X$ with
$\Inn(S)\leq X\leq\Aut(S)$ there exist classes as in
Theorem~\ref{thm:D}; consequently no finite group $G$ with unique
minimal normal subgroup $S^k$ ($k\geq2$, any admissible embedding) has
property $\mathrm{P}$. The almost simple branch $k=1$ is supplied separately
by Tikhonenko--Tyutyanov \cite{TT2010}.
\end{convention}

\section{Two representative applications}\label{sec:applications}

The criterion is most useful when the family work is reduced to a repeatable
sequence: select stable classes, prove maximality, choose an obstruction prime,
check the group and subgroup valuations, bound the outer contribution, and
route the exceptions. We retain one ordinary application and one graph-fusion
application here. Appendix~\ref{app:families} gives the remaining family
proofs in the same format.

\subsection{Ordinary stable maximal classes: alternating groups}

\begin{theorem}\label{thm:an}
$S=A_n$, $n\geq5$, is excluded.
\end{theorem}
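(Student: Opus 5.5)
The plan is to apply Theorem~\ref{thm:D} with two intransitive maximal subgroups of $A_n$ and an obstruction prime $p$ in the upper half of $\{1,\dots,n\}$.

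\textbf{Outer contribution.} First dispose of $n=6$. There $A_6\cong\PSL_2(9)$, $\Out(A_6)$ has order $4$, and the exceptional outer automorphism fuses the two classes of $A_5$. I would route this case to the $\PSL_2(q)$ family treatment, or to a GAP certificate. For $n\ge5$ with $n\ne6$ we have $\Aut(A_n)=S_n$, so $x\mid 2$ and $\vp{p}(x)=0$ for every odd prime $p$.

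\textbf{Stable maximal classes.} For $1\le a<n/2$ put
\[
V_a:=(S_a\times S_{n-a})\cap A_n,
\]
the setwise stabilizer of an $a$-subset. Since $a\ne n-a$, this is a maximal subgroup of $A_n$ (the standard intransitive case). Its $A_n$-class is $S_n$-stable, because $S_n$ acts transitively on $a$-subsets and $A_n$ already does. Different values of $a$ give different orders, hence distinct classes. By Remark~\ref{rem:maxauto}, all the structural hypotheses of Theorem~\ref{thm:D} then hold automatically.

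\textbf{Choice of prime.} Choose a prime $p$ with $n/2<p\le n$, so that $\vp{p}(|A_n|)=1$. If both parts satisfy $a<p$ and $n-a<p$, then $p\nmid|V_a|$. For two such values $a\ne a'$ we get
\[
\eta_p(V_a,V_{a'})=1>0=\vp{p}(x),
\]
and Theorem~\ref{thm:D} excludes $A_n$. The admissible $a$ are the integers with $n-p<a<n/2$. So it suffices to find a prime $p\le n$ with $p\ge n/2+3$ roughly, which guarantees at least two such $a$. For example, $p\ge(n+5)/2$ suffices for odd $n$, and $p\ge(n+6)/2$ for even $n$.

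\textbf{Existence of the prime (main obstacle).} For large $n$ this is a prime in $\bigl((n+6)/2,\,n\bigr]$. I would cite Nagura's theorem: for $m\ge25$ there is a prime in $(m,6m/5)$. Taking $m\approx 5n/6$ gives a prime in $(5n/6,n]$ once $n$ exceeds a small bound such as $30$. Since $5n/6\ge(n+6)/2$ for $n\ge9$, this covers all large $n$. Below that bound I would check the values directly. For instance:
\begin{itemize}
\item $n=5$, $p=5$: $a\in\{1,2\}$, which is Example~\ref{ex:a5};
\item $n=7$, $p=7$: $a\in\{1,2,3\}$;
\item $n=8$, $p=7$: $a\in\{2,3\}$;
\item $n=9$, $p=7$: $a\in\{3,4\}$.
\end{itemize}

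\textbf{Residual cases.} A few small $n$ fail because only one admissible $a$ exists, for example $n=10$ with $p=7$. For these I would pair the single intransitive $V_a$ with a transitive maximal subgroup $W$ of order prime to $p$ and with $S_n$-stable class. Natural candidates are an imprimitive $(S_2\wr S_5)\cap A_{10}$ with $p=7$, or a primitive affine or almost simple subgroup. Failing that, I would treat these $n$ by GAP certificates in the designated finite base. Identifying this finite residual list and certifying it is the step I expect to require the most care.
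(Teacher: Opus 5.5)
Your proposal is correct. For large $n$ it coincides with the paper's argument: take the largest prime $p\le n$, use Nagura's theorem plus inspection to get $p\ge n/2+3$, choose two intransitive maximal classes of order prime to $p$, and conclude $\eta=1>0=v_p(x)$. The paper also admits $(S_{n/2}\wr S_2)\cap A_n$ for even $n$, but that bound makes it unnecessary.

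The real difference is where the uniform argument starts. The paper sends every $n\le 14$ to the GAP certificates and argues only for $n\ge 15$. You push the argument down to $n=5$, and your finite check closes easily. With the largest prime $p\le n$, your conditions $p\ge (n+5)/2$ ($n$ odd) and $p\ge (n+6)/2$ ($n$ even) hold for all $5\le n\le 29$ except $n=6$ and $n=10$.

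For $n=10$ your first candidate works. The subgroup $(S_2\wr S_5)\cap A_{10}$, or equally $(S_5\wr S_2)\cap A_{10}$, has the following properties:
\begin{itemize}
\item it is maximal, since the only imprimitive exception in Liebeck--Praeger--Saxl is $(S_2\wr S_4)\cap A_8$;
\item it forms a single $S_{10}$-stable class, because the wreath product contains odd permutations;
\item its order is prime to $7$.
\end{itemize}
Paired with $V_4$ at $p=7$, it gives $\eta=1$.

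So on your route the only degree that needs a certificate is $n=6$, and it genuinely does. Under the closures containing the exceptional outer automorphisms, $3^2{:}4$ is the only stable maximal class. The paper's record therefore uses a non-maximal stable-poset substitute whose saturation is certified by computation. Routing $A_6\cong\PSL(2,9)$ to the $\PSL(2,q)$ family does not avoid this, because Theorem~\ref{thm:psl2} itself sends $q=9$ to the certificates.

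In short, your route replaces the certificates for $A_5$ and $A_7,\dots,A_{14}$ with a hand check. The paper's route costs nothing extra, since the finite base is needed for other groups anyway.
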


\begin{proof}
\emph{Finite exceptions.} For $n\leq14$ this is covered by finite GAP certificates: the cases $n\leq10$ fall under
Proposition~\ref{prop:base} (including $n=6$, whose exceptional outer
automorphisms fuse classes; all five $X$ are handled there), and
the cases $11\leq n\leq14$ are covered by the finite certificates of
Appendix~\ref{app:finite}.
Assume $n\geq15$, so $\Aut(A_n)=S_n$ and $x\mid2$.

\emph{Obstruction prime.} Let $p$ be the largest prime $\leq n$. We first record the bound
$p\geq n/2+3$. By Nagura's theorem
\cite[Theorem, pp.~180--181]{Nagura}, there is a prime in $(y,6y/5]$
for $y\geq25$. Thus, for $n\geq31$, there is a prime in $(5n/6,n]$,
and $5n/6>n/2+3$ for $n>18$. For $15\leq n\leq30$, the largest prime
$p\leq n$ satisfies the bound by inspection:
$p=13,13,17,17,19,19,19,19,23,23,23,23,23,23,29,29$ for
$n=15,\dots,30$. In particular, $n/2<p\leq n$.

\emph{Selected classes, maximality, and stability.} For $m<n/2$ let
$M_m:=(S_m\times S_{n-m})\cap A_n$, and for $n$ even let
$I:=(S_{n/2}\wr S_2)\cap A_n$. Each $M_m$ is maximal in $A_n$, and $I$
is maximal except in a finite list of cases, all with $n\leq14$
\cite[Thm.~1 and Tables~I--II, pp.~366--368]{LPS1987}.
Conjugation by $S_n$ preserves the partition shape
(resp.\ block structure), each shape is a single $A_n$-class, and
$\Aut(A_n)=S_n$ for $n\neq6$: all these classes are $X$-stable for every
$X$.

\emph{Occurrence, avoidance, and outer contribution.}
For any two distinct classes among
$\{M_m:\,n-p<m<n/2\}\cup\{I\ \text{if $n$ even and}\ n/2<p\}$, we have
$\vp{p}(|A_n|)=1$, while $\vp{p}$ of each
subgroup order is $0$ (all parts of the partitions involved are $<p$),
and $\vp{p}(x)\leq\vp{p}(2)=0$. Thus $\eta=1>0$, and
Theorem~\ref{thm:D} applies, provided two such classes
exist. The number of admissible intransitive $m$ is
$p-\lfloor n/2\rfloor-1$, plus one for $I$ when $n$ is even; two classes
exist because $p\geq n/2+3$.

\emph{Conclusion.} Theorem~\ref{thm:D} excludes $A_n$ for every
$n\geq15$, while the exact finite certificates cover the preceding degrees.
\end{proof}

The alternating family illustrates the ordinary mechanism: maximality and
stability are visible from the permutation action, and a prime near $n$
avoids two selected subgroup orders. Nothing in the argument depends on a
fixed value of the socle multiplicity.

\subsection{Graph fusion: flag parabolics in projective linear groups}

The next proposition exhibits the feature that ordinary examples do not show.
A graph automorphism fuses opposite maximal-parabolic classes, so one replaces
them by parabolics attached to invariant flags. Lemma~\ref{lem:P} then recovers
maximal supplements even though the coordinate subgroups themselves need not
be maximal in $S$.

For $S=\PSL(n,q)$, the \emph{graph part} of $X$ means the image of $X$ in
the quotient of $\Out(S)$ by its diagonal and field subgroup. Thus $X$ has
nontrivial graph part precisely when this image contains the nontrivial
$A_{n-1}$ diagram involution. Appendix~\ref{app:graph} gives the corresponding
definition for the other graph and graph--field families.

\begin{proposition}[Projective flag-parabolic application]
\label{prop:psl-flag}
Let $S=\PSL(n,q)$ with $n\geq4$, and let
$\Inn(S)\leq X\leq\Aut(S)$ have nontrivial graph part. Then the hypotheses of
Theorem~\ref{thm:D} hold for this $X$. Consequently no admissible group with
socle $S^k$, $k\geq2$, and coordinate closure $X$ has property $\mathrm{P}$.
\end{proposition}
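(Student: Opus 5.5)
The plan is to apply Corollary~\ref{cor:supplement-engine} with two flag-parabolic classes supplied by Lemma~\ref{lem:P}, and then Theorem~\ref{thm:D} with a primitive prime divisor as obstruction prime. Number the nodes of the $A_{n-1}$ diagram $\Delta=\{1,\dots,n-1\}$, so that the diagram involution is $\tau\colon i\mapsto n-i$.

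\emph{Step 1 (stability of parabolic classes).} I will first show that for $K\subseteq\Delta$ the class $[Q_K]$ is $X$-stable if and only if $K^\tau=K$, given that $X$ has nontrivial graph part. Diagonal automorphisms are induced by $\mathrm{PGL}(n,q)$, which preserves each flag type. Field automorphisms act on the standard $BN$-pair through the root subgroups and preserve $B$ and $Q_K$. Hence these automorphisms fix every class $[Q_K]$. The graph automorphism, composed with a suitable inner automorphism, is inverse-transpose. It normalizes $B$ and sends $Q_K$ to $Q_{K^\tau}$. Since $X$ contains an element whose graph part is $\tau$, and since $Q_K$ and $Q_{K'}$ are conjugate only when $K=K'$ \cite[Thm.~8.3.3]{Carter}, the claim follows. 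I expect this step to need the most care, because graph--field composites and the precise meaning of ``graph part'' must be tracked. The point is that every element of $X$ acts on parabolic types either trivially or by $\tau$, and at least one acts by $\tau$.

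\emph{Step 2 (the two classes).} For $1\le i<n/2$ put $J_i:=\Delta\setminus\{i,n-i\}$, which is $\tau$-stable. The only $K$ with $J_i\subsetneq K\subsetneq\Delta$ are $\Delta\setminus\{i\}$ and $\Delta\setminus\{n-i\}$, and these are swapped by $\tau$. Hence hypotheses (a) and (b) of Lemma~\ref{lem:P} hold for $Q_{J_i}$. When $n$ is even, $P_{n/2}$ is maximal and $\tau$-stable, so Remark~\ref{rem:maxauto} applies to it. Since $n\ge4$, there are always two distinct such classes: $Q_{J_1}$ and $P_2$ when $n=4$, and $Q_{J_1}$ and $Q_{J_2}$ when $n\ge5$. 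Let $V,W$ be two of them.

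\emph{Step 3 (obstruction prime).} Write $q=p_0^f$. Each selected parabolic has order $q^{a}(q-1)^{b}\prod_j(q^j-1)$ for some exponents $a,b$, where every $j$ is a degree of a Levi block. The blocks are $\GL_i\times\GL_{n-2i}\times\GL_i$, or $\GL_{n/2}^2$, so every $j\le n-1$. By Zsigmondy, take a primitive prime divisor $r$ of $p_0^{fn}-1$. Then $r\nmid q^j-1$ for $j<n$, and so $\vp{r}(|V|)=\vp{r}(|W|)=0$. On the other hand $r$ divides $q^n-1$ and does not divide $\gcd(n,q-1)$, so $\vp{r}(|S|)\ge1$. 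Since $r\equiv1\pmod{fn}$, we have $r>\max(2,f)$, and $r\nmid q-1$. Because $|\Out(S)|=2f\gcd(n,q-1)$, it follows that $\vp{r}(x)=0<1\le\eta$.

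The only Zsigmondy exception with $n\ge4$ is $(q,n)=(2,6)$. In that case I will use $r=31$, a divisor of $2^5-1$. Every Levi block of the selected parabolics has degree at most $4$, so $31$ divides neither $|V|$ nor $|W|$. Moreover $x\mid2$, so $\vp{31}(x)=0$. Theorem~\ref{thm:D} then gives the conclusion in all cases.
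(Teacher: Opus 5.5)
Your proposal is correct and follows essentially the same route as the paper. You use the same selected pairs ($P_2$ and $Q_{\{2\}}=Q_{J_1}$ for $n=4$, and $Q_{J_1},Q_{J_2}$ for $n\geq5$) through Lemma~\ref{lem:P}, a primitive prime divisor of $p^{nf}-1$ with the same outer-automorphism bound, and $r=31$ for $\PSL(6,2)$; your Step~1 simply spells out the type-preservation argument ($[Q_K]$ is $X$-stable exactly when $K^\tau=K$) that the paper uses implicitly.
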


\begin{proof}
Write $q=p^f$, $d=\gcd(n,q-1)$, and number the nodes of the $A_{n-1}$
diagram in the usual order. The graph involution $\delta$ sends node $i$ to
$n-i$.

\emph{Selected classes.} If $n=4$, use the maximal parabolic $P_2$ and
the flag parabolic $Q_{\{2\}}$, the stabilizer of an incident
point--hyperplane pair. If $n\geq5$, put
\[
 J_1=\Delta\setminus\{1,n-1\},\qquad
 J_2=\Delta\setminus\{2,n-2\},
\]
and use $Q_{J_1}$ and $Q_{J_2}$, the stabilizers of an incident
point--hyperplane flag and an incident line--$(n-2)$-space flag.

\emph{Maximality and stability.} For $n=4$, node $2$ is fixed by $\delta$,
so $[P_2]$ is $X$-stable. The only proper parabolic overgroups of
$Q_{\{2\}}$ are $P_1$ and $P_3$, which are exchanged by $\delta$; hence
Lemma~\ref{lem:P} applies. For $n\geq5$, both deleted node sets are
$\delta$-orbits. The proper parabolic overgroups of $Q_{J_1}$ are
$P_1,P_{n-1}$, and those of $Q_{J_2}$ are $P_2,P_{n-2}$; each pair is
exchanged by $\delta$. Lemma~\ref{lem:P} applies to both classes.

\emph{Obstruction prime and occurrence in $|S|$.} If $n=4$, let $r$ be a
primitive prime divisor of $p^{4f}-1$; then
$r\mid\Phi_4(q)\mid|S|$. If $n\geq5$, let $r$ be a primitive prime
divisor of $p^{nf}-1$; then $r\mid\Phi_n(q)$, and this cyclotomic
factor occurs in the order of $S$ up to the central divisor $d$.

\emph{Avoidance of the selected subgroups.} For $n=4$, the relevant Levi
exponents are at most $2f<4f$. For $n\geq5$, the Levi blocks have sizes
$(1,n-2,1)$ and $(2,n-4,2)$, so their exponents are at most
$(n-2)f$ and $\max(2,n-4)f$, both smaller than $nf$. Primitivity therefore
gives $r\nmid|V||W|$ for the selected pair.

\emph{Outer contribution.} Since $x=|X/\Inn(S)|$ divides $2df$ and
$r\equiv1\pmod{nf}$ (with $n=4$ in the first branch), we have
$r\nmid2f$; also $r\nmid d$, since $r\mid q-1$ would force
$\ord_r(p)\mid f$. Hence $\vp{r}(x)=0<\vp{r}(|S|)$.

\emph{Exceptions.} There is no positive-base Zsigmondy exception when
$n=4$. For $n\geq5$, the only one is $p^{nf}=2^6$, which gives
$(n,f)=(6,1)$. Take $r=31$; the two flag Levi orders contain only
$\GL_4(2)$- and $\GL_2(2)$-factors and $2$-powers, while
$\vp{31}(|\PSL(6,2)|)=1$ and $x\mid2$.

For this fixed $X$, Theorem~\ref{thm:D} now excludes every admissible group
with socle $S^k$, $k\geq2$, and coordinate closure $X$.
\end{proof}

This application isolates the graph-fusion yield: the stable object is an
invariant flag, not an individual fused maximal-parabolic class. The
stable-poset lemma makes that replacement available uniformly in every socle
multiplicity.

\section{Classification coverage and proof of the main theorem}\label{sec:coverage}

The preceding sections contain the reduction, the reusable criterion, and two
representative applications. Table~\ref{tab:coverage} routes every remaining
simple family to a detailed appendix proof or a finite certificate claim; it
also identifies the common source of the obstruction prime and the exceptional
route. Thus the logical path to Theorem~\ref{thm:main} is visible without
reading the repetitive calculations first.

\begin{table}[htbp]
\scriptsize
\begin{tabular}{@{}L{0.18\textwidth}L{0.23\textwidth}L{0.18\textwidth}L{0.19\textwidth}L{0.13\textwidth}@{}}
\toprule
Family & Stable classes & Prime source & Exceptional handling & Detailed proof \\
\midrule
Alternating & Intransitive and imprimitive maximals & Prime near $n$ & Small degrees certified & \S\ref{sec:applications} \\
$\PSL(2,q)$ & Borel or torus normalizers & Defining or primitive prime & Small fields certified & App.~\ref{app:psl2} \\
Lie type, no graph fusion & Two maximal parabolics & Zsigmondy & Finite base or substitute prime & App.~\ref{app:nograph} \\
Twisted rank at least two & Relative maximal parabolics & Zsigmondy & Listed small parameters & App.~\ref{app:twisted2} \\
Twisted rank one & Borel and geometric maximal & Zsigmondy & Small fields certified & App.~\ref{app:twisted1} \\
Graph-automorphism families & Fixed parabolics, stable flags, split-torus normalizers, or stable fixed-point/subfield classes & Family-specific primitive prime & Finite certificates or substitutes & \S\ref{sec:applications}, App.~\ref{app:graph} \\
Sporadic and Tits & Certified stable maximal classes & Certificate prime & Exact finite records & App.~\ref{app:finite} \\
\bottomrule
\end{tabular}
\caption{Coverage of the finite simple groups by the stable-class criterion.}
\label{tab:coverage}
\end{table}

\begin{proposition}[Coverage]\label{prop:coverage}
Every non-abelian finite simple group is excluded in the sense of
Convention~\ref{conv:excluded}.
\end{proposition}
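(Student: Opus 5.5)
The proof is a case analysis over the classification of finite simple groups, organized along the rows of Table~\ref{tab:coverage}. For each non-abelian simple $S$ and each $X$ with $\Inn(S)\leq X\leq\Aut(S)$, we must exhibit two classes $[V]\neq[W]$ that are maximal in $\Pos_X(S)$ and normally saturating, together with a prime $p$ such that $\eta_p(V,W)>\vp{p}(x)$. The structural hypotheses will always come from two sources. The first is Remark~\ref{rem:maxauto}, for $X$-stable maximal classes. The second is Lemma~\ref{lem:P}, for flag parabolics whose parabolic overgroups are fused. In every case Corollary~\ref{cor:supplement-engine} then supplies the input for Theorem~\ref{thm:D}. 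The alternating groups are already done by Theorem~\ref{thm:an}, and the sporadic groups, the Tits group and the designated small cases are handed to the finite certificates (Proposition~\ref{prop:base} and Appendix~\ref{app:finite}).

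For the groups of Lie type in characteristic $p_0$, with $q=p_0^f$, I would follow the template of Proposition~\ref{prop:psl-flag}, splitting by whether $X$ has a nontrivial graph part.

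\emph{No graph part (or no graph automorphism at all).} Every maximal parabolic class is $X$-stable, since diagonal and field automorphisms preserve each standard parabolic class. In rank at least $2$ I take two maximal parabolics $P_i,P_j$ and choose $r$ to be a Zsigmondy prime for $p_0^{ef}-1$, where $e$ is the largest cyclotomic degree occurring in $|S|$ (for example $e=h$, the Coxeter number, in the untwisted case). Such an $r$ divides $|S|$ but divides neither Levi order, since the Levi factors only involve smaller cyclotomic degrees. Because $r\equiv1\pmod{ef}$, it cannot divide $f$, and it cannot divide the diagonal part $d$ either; hence $\vp{r}(x)=0$. For twisted groups of relative rank at least $2$, I would argue the same way with relative parabolics and the twisted cyclotomic degrees.

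\emph{Relative rank one.} This covers $\PSL(2,q)$, $\PSU(3,q)$, $\Sz(q)$ and ${}^2G_2(q)$. Here I would pair the Borel subgroup with a torus normalizer or another geometric maximal subgroup that is $\Aut(S)$-stable, and again use a primitive prime divisor that misses both subgroup orders. If instead $r=p_0$, the defining characteristic can serve directly, provided the second subgroup has order prime to $p_0$.

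\emph{Nontrivial graph part.} This covers $\PSL(n,q)$ (Proposition~\ref{prop:psl-flag}), $P\Omega^+(2n,q)$ including triality for $D_4$, $E_6$, and the graph–field automorphisms of $B_2$, $G_2$, $F_4$ in characteristic $2$ or $3$. I would choose $\delta$-fixed parabolics or stable flags, with Lemma~\ref{lem:P}(b) checked on the diagram, and again a primitive prime attached to the top cyclotomic degree. Where fusion leaves too few stable parabolics (for instance $B_2(2^f)$ with graph–field automorphisms, where $P_1$ and $P_2$ are swapped), I would substitute split-torus normalizers or subfield and fixed-point subgroup classes that are $X$-stable. Their maximality would be taken from the known maximal subgroup lists.

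The main obstacle is uniformity at the edges of this scheme. Zsigmondy exceptions ($2^6$, and Mersenne or Fermat cases with $e=2$) must be handled, along with small $q$ where a candidate subgroup fails to be maximal, and primes $r$ that might divide $x$ through $d$ or $f$. For each family I would first try the top-degree primitive prime. When Zsigmondy fails, I would try an explicit substitute prime, as with $31$ for $\PSL(6,2)$. Whatever remains becomes a finite list for the GAP certificates. I expect the graph-fusion families, especially $D_4$ with triality and $B_2$, $G_2$, $F_4$ with graph–field automorphisms, to be the most delicate. There the stable classes are not parabolics, and both their maximality in $\Pos_X(S)$ and their normal saturation must be established separately, rather than obtained for free from Lemma~\ref{lem:P}.
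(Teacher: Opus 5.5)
Your plan follows the paper's route: a CFSG case split in which pairs of stable parabolics meet a Zsigmondy prime, rank-one groups pair the Borel with a second stable maximal class, graph fusion is handled by fixed parabolics and Lemma~\ref{lem:P} flags (plus fixed-point subgroups in the graph--field cases), and the residue goes to substitute primes or certificates. One step, however, fails as written, and it leaves an infinite branch open.

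In rank one you offer the defining prime as a fallback ``provided the second subgroup has order prime to $p_0$''. But the first subgroup is the Borel $B=N_S(Q)$ with $Q\in\Syl_{p_0}(S)$, so $\vp{p_0}(|B|)=\vp{p_0}(|S|)$. Hence $\eta_{p_0}(B,W)\leq0$ for every $W$, and the defining prime can only serve for a pair of $p_0'$-subgroups. This is exactly where you need it. Take $S=\PSL(2,p)$ with $p=2^m-1$ a Mersenne prime. Your Borel-based pair must use the split-torus normalizer $D_{p-1}$; the nonsplit one is useless, since $|B|\,|D_{p+1}|=|S|$ gives $\eta_r=0$ at every prime $r$. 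Then the primitive prime divisor of $p^2-1$ you want does not exist, because $p+1=2^m$ and $2\mid p-1$. Nor can these groups be moved into the finite certificate list, since the Mersenne primes are not known to be finite in number.

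Either of two repairs works. The paper (Case~A of Theorem~\ref{thm:psl2}) drops the Borel for odd $q\geq13$ and pairs the two dihedral torus normalizers of orders $q-1$ and $q+1$. Then $|U||V|/|S|=2/q$, so $\eta_p=f>\vp{p}(2f)\geq\vp{p}(x)$ at the defining prime, and no Zsigmondy input is needed. Alternatively, keep $(B,D_{q-1})$ and use the substitute prime $2$: one gets $\eta_2=\vp{2}(q+1)-\vp{2}(q-1)=m-1\geq4>1\geq\vp{2}(x)$ for $q=2^m-1\geq31$.

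Two smaller remarks. First, Proposition~\ref{prop:psl-flag} assumes $n\geq4$. For $\PSL(3,q)$ with graph part the only stable proper parabolic is the Borel, and the paper pairs it with the split-torus normalizer $((q-1)^2/d).S_3$, as your ``too few stable parabolics'' clause would. Second, the obstacle you anticipate in your last paragraph does not materialize. Triality uses $[P_2]$ and $[Q_{\{2\}}]$, and $F_4(2^f)$ uses the flags $Q_{\{2,3\}}$ and $Q_{\{1,4\}}$. For $\Sp(4,2^f)$ and $G_2(3^f)$, the paper uses the Borel via Lemma~\ref{lem:P} together with $\Fix(\widetilde\rho^{\,f})\cap S$, which is maximal in $S$. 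So Lemma~\ref{lem:P} and Remark~\ref{rem:maxauto} supply poset maximality and normal saturation in every graph-fusion case, with nothing to establish separately.
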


\begin{proof}
By the classification of finite simple groups
\cite[Ch.~1, Table~I, pp.~8--10]{GLS1} (see also
\cite[Thm.~1.1]{RoneyDougal2021}), $S$ is alternating,
of Lie type, or one of the $26$ sporadic groups. The cases are as follows.
The simplicity exceptions are
those in \cite[Thm.~2.2.7(a)]{GLS3}; the low-rank classical
identifications and the exceptional derived-group identifications are
recorded in \cite[\S2.4, pp.~xi--xii, and \S3.5, p.~xv]{ATLAS}
(with the individual \textsc{Atlas} entries pinpointed in the source map in
\cite{Repo}).
\begin{itemize}
\item $A_n$ ($n\geq5$): Theorem~\ref{thm:an}.
\item $\PSL(2,q)$ ($q\geq4$): Theorem~\ref{thm:psl2}.
\item $\PSL(n,q)$ ($n\geq3$), $D_n(q)$ ($n\geq4$), $E_6(q)$:
Theorem~\ref{thm:graph}.
\item $\PSp(2n,q)$: $n\geq3$, or $n=2$ with $q$ odd:
Theorem~\ref{thm:nograph}; $\Sp(4,2^f)$, $f\geq2$:
Theorem~\ref{thm:graph} ($\Sp(4,2)$ is not simple).
\item $\Omega(2n{+}1,q)$, $q$ odd, $n\geq3$: Theorem~\ref{thm:nograph}
(for $q$ even, $B_n(q)\cong C_n(q)$, already listed;
$\Omega(5,q)\cong\PSp(4,q)$).
\item $E_7(q)$, $E_8(q)$; $F_4(q)$, $p\neq2$; $G_2(q)$, $p\neq3$:
Theorem~\ref{thm:nograph}. $F_4(2^f)$ and $G_2(3^f)$:
Theorem~\ref{thm:graph}. ($G_2(2)$ and ${}^2G_2(3)$ are not simple;
their derived groups $\PSU(3,3)$ and $\PSL(2,8)$ are covered.)
\item $\PSU(n,q)$: $n=3$: Theorem~\ref{thm:twisted1}; $n\geq4$:
Theorem~\ref{thm:twisted2}.
\item ${}^2D_n(q)$ ($n\geq4$), ${}^3D_4(q)$, ${}^2E_6(q)$,
${}^2F_4(2^f)$ ($f$ odd $\geq3$): all four families are covered by
Theorem~\ref{thm:twisted2}; the Tits group ${}^2F_4(2)'$ by
Proposition~\ref{prop:sporadic}. (${}^2D_2$, ${}^2D_3$ are
$\PSL(2,q^2)$, $\PSU(4,q)$, already listed.)
\item $\Sz(2^f)$ ($f$ odd $\geq3$), ${}^2G_2(3^f)$ ($f$ odd $\geq3$):
Theorem~\ref{thm:twisted1}.
\item Sporadic groups: Proposition~\ref{prop:sporadic}.
\end{itemize}
Every finite simple group appears (repetitions from exceptional
isomorphisms are harmless), so the proposition follows.
\end{proof}

\begin{proof}[Proof of Theorem~\ref{thm:main}]
Suppose not, and let $G$ have least order among the non-soluble groups
with property $\mathrm{P}$. By Proposition~\ref{prop:min}, $G$ has a
unique minimal normal subgroup $N=S^k$, $S$ non-abelian simple, and
after conjugation $N\leq G\leq X\wr S_k$ as in Convention~\ref{conv:X}.
If $k=1$, then $G$ is almost simple, contrary to the theorem of
Tikhonenko--Tyutyanov \cite{TT2010}. Hence $k\geq2$. By
Proposition~\ref{prop:coverage} there are classes
$[V]\neq[W]$, maximal in $\Pos_X(S)$ and normally saturating, and a
prime $p$ with $\vp{p}(|S|)-\vp{p}(|V|)-\vp{p}(|W|)>\vp{p}(x)$. By
Theorem~\ref{thm:D}, $G$ does not have property $\mathrm{P}$, a
contradiction.
\end{proof}

\begin{remark}
The argument proves the following stronger statement: for \emph{every}
finite group $G$ with a unique minimal
normal subgroup $S^k$ ($S$ non-abelian simple, $k\geq2$), some pair of
non-conjugate maximal subgroups of $G$ fails to factorize $G$. The
reduction of \S\ref{sec:reduction} is needed only to pass from an
arbitrary non-soluble group to this configuration.
\end{remark}

\section{Conclusion}\label{sec:conclusion}

The transferable result is the stable-class divisibility criterion: two
local classes with a strict valuation gap rule out every multiplicity $k$ at
once. Its subgroup hypotheses are structural rather than cosmetic.
$X$-stability and self-normalization give exact coordinate intersections and
orders, while maximality in $\Pos_X(S)$ and normal saturation are what promote
the resulting normalizers to maximal supplements. Flag parabolics show that
the method can survive automorphism fusion even when no individual maximal
parabolic class is stable.

The global coverage still depends on CFSG and on published maximal-subgroup,
order, automorphism, parabolic, and primitive-prime-divisor results. The finite
and sporadic residue is computationally certified, and the formal developments
verify only the conditional components stated in Appendix~\ref{app:trust};
they do not replace those external inputs or give an end-to-end formal proof.

Li and Yang's subsequent product--socle lifting preprint \cite{LiYang2026}
gives a structurally different route through the direct-power branch. The same
wreath-top budget also appears in the exact-complement problem
\cite{Sater1868}, but its final obstruction is different. Natural next
questions are whether the supplement criterion applies to other prescribed
products of maximal subgroups, whether its stable-poset hypotheses admit a
classification-free source in broader families, and whether the remaining
parabolic and finite-coverage interfaces can be formalized without importing
the full classification ledger into a proof assistant.

\appendix

\section{Supplementary and infinite-family proofs}\label{app:families}

This appendix first records the unused $k=1$ comparison and then discharges the
repetitive infinite-family obligations while the main body retains the two
mechanisms that teach the method. Every family case follows the same audit
order: \emph{selected classes}; \emph{maximality and
stability}; \emph{obstruction prime}; \emph{occurrence in $|S|$};
\emph{avoidance of the selected subgroup orders}; \emph{outer contribution};
\emph{Zsigmondy or small-parameter exceptions}; and \emph{conclusion by the
stable-class criterion}. Closely related branches share a heading when the
same argument proves several of these items at once.

\subsection*{Supplementary almost-simple argument}\label{app:almost-simple}

\begin{proof}[Proof of Theorem~\ref{thm:Dprime}]
\emph{Supplement and order.} By the Frattini argument via $X$-stability,
as in Lemma~\ref{lem:A}, we have $B_VS=G$. Moreover,
$B_V\cap S=N_S(V)=V$, and hence $|B_V|=t|V|$.

\emph{Maximality.} The proof of Lemma~\ref{lem:B} applies without its
coordinate-kernel step; normal saturation enters only through that step and is
not needed at $k=1$ (the hypothesis is kept for uniformity with
Theorem~\ref{thm:D}). Let $B_V\leq H\leq G$ and put $T:=H\cap S$, an
$H$-invariant subgroup containing $V$. If $T=S$, then $H\supseteq S$ and
$H=HS=G$. Otherwise $G=B_VS=HS$. For $g=hs$, with $h\in H$ and
$s\in S$, one has $T^g=(T^h)^s=T^s$; hence $[T]$ is $G$-stable and,
since $X=G$, also $X$-stable. The normalizer tower of $T$ ends at a proper
self-normalizing subgroup $U$. Stability propagates up the tower, so
$[U]\in\Pos_X(S)$ and $[U]\geq[V]$. Poset maximality of $[V]$ forces
$[U]=[V]$, and $V\leq T\leq U$ gives $T=V$. Thus
$H\leq N_G(V)=B_V$.

\emph{Nonconjugacy.} If $B_V^g=B_W$ with $g\in G$, then
$V^g=(B_V\cap S)^g=B_W\cap S=W$, and $X$-stability of $[V]$ gives
$[W]=[V]$, a contradiction.

\emph{Arithmetic.} Property $\mathrm{P}$ would force $G=B_VB_W$, so
$t\,|V||W|/|S|\in\Z$, equivalently $p^\eta\mid t=x$, contrary to
$\eta>\vp{p}(x)$.
\end{proof}

Applying Theorem~\ref{thm:Dprime} to the witnesses in
Proposition~\ref{prop:coverage} gives a supplementary proof that no almost
simple group has property $\mathrm{P}$. This comparison is not used in the
proof of Theorem~\ref{thm:main}, whose $k=1$ branch continues to use
Tikhonenko--Tyutyanov \cite{TT2010}.

\subsection*{Common inputs for the family analysis}\label{app:common-inputs}

Three simplifications organize the ledger. First, when both selected classes
consist of maximal subgroups of $S$, the stable-poset and normal-saturation
hypotheses are automatic, so only stability and the valuation inequality need
checking. Second, in the twisted families the automorphism structure induces
no nontrivial permutation of the relevant relative-diagram types; diagonal
and field automorphisms preserve the chosen parabolic classes. This includes
the triality groups ${}^3D_4(q)$ and the Ree groups. Third, when a graph
automorphism fuses maximal-parabolic classes, invariant flag parabolics become
maximal stable classes by Lemma~\ref{lem:P}; this is the required
flag-parabolic substitute for an individual fused maximal class.

Throughout this section $S$ is simple, $X$ is any group with
$\Inn(S)\leq X\leq\Aut(S)$, and $x=|X/\Inn(S)|$. By
Remark~\ref{rem:maxauto}, when the two exhibited classes consist of
maximal subgroups of $S$, only their $X$-stability and the valuation
inequality require proof. We use Zsigmondy's theorem
\cite[specialized theorem, p.~283]{Zsigmondy}
throughout: for integers $a\geq2$, $e\geq3$ with $(a,e)\neq(2,6)$, there
is a prime $r$ with $\ord_r(a)=e$ (a \emph{primitive prime divisor} of
$a^e-1$); such $r$ satisfies $r\equiv1\pmod e$. For $q=p^f$ we write
$\Phi_e=\Phi_e(q)$ for cyclotomic values and always apply Zsigmondy to
the prime $p$ with exponent $ef$, so $\ord_r(p)=ef$ and
$r\equiv1\pmod{ef}$. The source's additional positive-base exception has
exponent $2$, which lies outside every invocation here and therefore does not
apply. An invocation-by-invocation audit of the hypotheses and exceptions
is part of \cite{Repo}. The $b=1$ specialization and both exceptions are
independently restated in \cite[Thm.~2.2, p.~2]{Jones2007}.

The simple-group and outer-automorphism order formulas used below are
those of \cite[Tables~5--6, p.~xvi]{ATLAS}; for the classical families,
the same formulas and their central divisors are also displayed in
\cite[\S\S2.1--2.4, pp.~x--xii]{ATLAS}. As an independent published
derivation and tabulation we use
\cite[\S10, pp.~220--221, and summary table, p.~239]{Carter1965}.
For parabolics, the statement that the unipotent radical is extended by
the group obtained from the complementary subdiagram (with node-orbits in
the twisted case) is \cite[\S3.6, p.~xv]{ATLAS}; the Levi decomposition
and its root-subgroup description are
\cite[\S8.5, pp.~118--120]{Carter}. Thus the cyclotomic bounds below are
not inferred from numeric tests: they come from the displayed group-order
formulas applied to the named Levi types. The formula-to-source and
parameter audit, including each exceptional substitute, is archived in
\cite{Repo}. In the exceptional case over $\F_2$ below,
$\Omega^+(8,2)$ and $P\Omega^+(8,2)$ denote the same simple group; we use
the shorter former notation when naming that finite group. For classical
groups, argument notation such as $\Sp(4,2^f)$ denotes a named simple group,
whereas subscript notation such as $\Sp_2(q)$ and $\GL_2(q)$ is used for Levi
factors.

\subsection{Projective special linear groups of rank one}\label{app:psl2}

\begin{theorem}\label{thm:psl2}
$S=\PSL(2,q)$, $q=p^f\geq4$, is excluded.
\end{theorem}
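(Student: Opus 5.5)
For every $X$ with $\Inn(S)\leq X\leq\Aut(S)$, the plan is to exhibit two $X$-stable classes of maximal subgroups of $S$ together with a prime that gives a strict valuation gap, and then invoke Theorem~\ref{thm:D}. Since both classes will consist of maximal subgroups, Remark~\ref{rem:maxauto} supplies poset maximality and normal saturation, so only stability and the arithmetic need checking. Write $d=\gcd(2,q-1)$, so that $|S|=q(q^2-1)/d$. Recall that $\Out(S)$ is generated by diagonal and field automorphisms, so $x\mid df$.

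\emph{Small fields.} The cases $q\leq 11$ go to the finite base. Here $q=4,5,9$ give $A_5$ and $A_6$, $q=7$ gives $\PSL(3,2)$, and $q=8,11$ are covered by GAP certificates. This leaves $q\geq 13$ for $q$ odd and $q\geq 16$ for $q$ even. (The case $q=8$ could alternatively be handled by the even-$q$ argument below with the prime $r=3$: there $v_3(|S|)=2$ and $x\mid 3$.)

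\emph{Odd $q\geq 13$.} I would take $V=D_{2(q+1)/d}$ and $W=D_{2(q-1)/d}$. Both are maximal in this range by Dickson's list (the cited maximal-subgroup tables). Each is the unique $S$-class of dihedral subgroups of its order, namely the normalizer of a non-split or split torus. Automorphisms preserve isomorphism type and order, so both classes are $\Aut(S)$-stable. The obstruction prime is the defining prime $p$. Since $p\nmid 2(q\pm1)$, we get $\eta_p=v_p(|S|)=f$. On the other side, $v_p(x)\leq v_p(2f)=v_p(f)<f$, so the hypothesis of Theorem~\ref{thm:D} holds.

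\emph{Even $q=2^f\geq 16$.} Here the $2$-parts of the dihedral orders spoil the defining-prime gap, so I would switch both the pair and the prime. Take $V=B=E_q{:}(q-1)$, the Borel subgroup, and $W=D_{2(q-1)}$. Both are maximal for $q\geq 8$, and both are $\Aut(S)$-stable: the Borel is the unique class of point stabilizers, and $W$ is the unique dihedral class of its order. Let $r$ be a primitive prime divisor of $2^{2f}-1$, which exists by Zsigmondy since $2f\geq 8$ avoids the exception $(2,6)$. Then $r\mid q+1$ and $r\nmid q(q-1)$, so $r$ divides neither $|V|$ nor $|W|$, and $\eta_r=v_r(q+1)\geq 1$. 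Moreover $r\equiv 1\pmod{2f}$ gives $r>f$, so $r\nmid x$ because $x\mid f$. Hence $\eta_r>0=v_r(x)$.

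\emph{Main obstacle.} The delicate part is not the arithmetic but the maximality and stability bookkeeping at the boundary. I need the exact thresholds at which $D_{2(q\pm1)/d}$ is maximal (failing at $q=7,9$ and at small $q$), and I need to confirm that no outer automorphism fuses two classes of dihedral subgroups of the same order, which could happen when $q$ is small. I would settle both points by citing Dickson's theorem in the form tabulated in the maximal-subgroup references, and push every residual small $q$ into the certified finite base.
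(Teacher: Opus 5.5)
Your proposal is correct and takes essentially the same route as the paper. You use the two torus normalizers with the defining prime $p$ for odd $q\geq 13$, and the Borel subgroup with $D_{2(q-1)}$ and a primitive prime divisor of $2^{2f}-1$ for even $q$; the paper also runs the even case at $q=8$ with $r=3$ (while listing $q=8$ among the finite certificates too), exactly as your parenthetical remark suggests.
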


\begin{proof}
\emph{Finite exceptions and order data.}
For $q\in\{4,5,7,8,9,11\}$ this is covered by
the finite GAP certificates of Appendix~\ref{app:finite}. All other simple parameters are covered
uniformly below: odd $q\geq13$ by Case~A and even $q=2^f$, $f\geq3$, by
Case~B. Recall
$|S|=q(q^2-1)/d$ with $d=\gcd(2,q-1)$, and
$\Out(S)=C_d\times C_f$, so $x\mid df$: that is, $x\mid2f$ for $q$ odd
and $x\mid f$ for $q$ even.

\emph{Case A---selected classes, maximality, and stability.}
Suppose $q$ is odd and $q\geq13$. Let $U:=N_S(T_+)$ and $V:=N_S(T_-)$
be the normalizers of the split and nonsplit maximal tori. These are dihedral
groups of orders $q-1$ and $q+1$, respectively. Both subgroups are maximal in $S$
for $q\geq13$ by Dickson's theorem \cite{Dickson,Huppert}; see also
\cite[Tables~8.1--8.2, p.~376, and Table~8.7, p.~380]{BHRD}.
All split maximal tori are $S$-conjugate, as are all nonsplit maximal
tori, and every automorphism preserves each type (there is no
graph automorphism in type $A_1$; diagonal and field automorphisms act
algebraically), so $[U]$ and $[V]$ are $X$-stable for every $X$.

\emph{Case A---prime, occurrence, avoidance, and outer contribution.}
Here the obstruction prime is the defining prime $p$ and
$|U||V|/|S|=d/q$, so
$\eta=\vp{p}(|S|)-0-0=f$, while $x\mid2f$ gives
$\vp{p}(x)\leq\vp{p}(f)\leq\log_pf<f=\eta$. Theorem~\ref{thm:D} applies.

\emph{Case B---selected classes, maximality, and stability.}
Let $q=2^f$ with $f\geq3$. Here $|S|=q(q^2-1)$ and $x\mid f$.
Let $U$ be a Borel subgroup (the normalizer of a Sylow $2$-subgroup), of
order $q(q-1)$, and let $V:=N_S(T_+)$ be dihedral of order $2(q-1)$.
Both $U$ and $V$ are maximal for $q\geq8$ \cite{Dickson,Huppert}; see also
\cite[Tables~8.1--8.2, p.~376]{BHRD}. $[U]$ is the class of Sylow-$2$
normalizers, and $[V]$ is the unique class of split torus normalizers;
both are $X$-stable.

\emph{Case B---prime, occurrence, avoidance, outer contribution, and exception.}
Now $|U||V|/|S|=2(q-1)/(q+1)$ with $q+1$ odd and coprime to
$2(q-1)$, so it suffices to find a prime $r\mid q+1$ with
$\vp{r}(q+1)>\vp{r}(f)$. If $f\neq3$, take $r$ a primitive prime divisor
of $2^{2f}-1$: then $r\mid(2^{2f}-1)/(2^f-1)=q+1$ and
$r\equiv1\pmod{2f}$, so $r>f$ and
$\vp{r}(x)=0<\vp{r}(q+1)=\eta$. If $f=3$, then $q=8$ and
$q+1=9$; taking $r=3$ gives $\eta=2>1\geq\vp{3}(x)$.

\emph{Conclusion.} Theorem~\ref{thm:D} applies in both parity branches,
and the finite certificates route the remaining simple parameters.
\end{proof}

\subsection{Lie type without graph symmetry}\label{app:nograph}

\begin{theorem}\label{thm:nograph}
Let $S$ be a simple group of Lie type of rank $\geq2$ over $\F_q$,
$q=p^f$, of a type with no symmetry of its Dynkin diagram in the given
characteristic:
\[
C_n=\PSp(2n,q)\ (n\geq2,\ p\ \text{odd if}\ n=2),\quad
B_n=\Omega(2n{+}1,q)\ (n\geq3,\ q\ \text{odd}),
\]
\[
G_2(q)\ (p\neq3),\quad F_4(q)\ (p\neq2),\quad E_7(q),\quad E_8(q).
\]
Then $S$ is excluded.
\end{theorem}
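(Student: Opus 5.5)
We will apply Theorem~\ref{thm:D} with two maximal parabolic subgroups $V=P_i$ and $W=P_j$ ($i\neq j$) of $S$, chosen separately for each type. Since the Dynkin diagram has no symmetry in the given characteristic, $\Out(S)$ is generated by diagonal and field automorphisms; these preserve every $S$-class of maximal parabolics, because each stabilizes the fixed $BN$-pair up to $S$-conjugacy and fixes every node. Hence $[P_i]$ and $[P_j]$ are $X$-stable for every $X$. Maximal parabolics are maximal subgroups of $S$, so Remark~\ref{rem:maxauto} supplies self-normalization, poset maximality and normal saturation. What remains for each family is arithmetic: we need a prime $r$ with $\vp{r}(|S|)>\vp{r}(|P_i|)+\vp{r}(|P_j|)+\vp{r}(x)$. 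The outer bound we use is $x\mid d\cdot f$, where $d$ is the diagonal order ($\gcd(2,q-1)$ for $B_n,C_n,E_7$, and $1$ for $G_2,F_4,E_8$).

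\emph{Choice of prime.} We let $r$ be a primitive prime divisor of $p^{ef}-1$, where $e$ is the largest degree of $W$, that is, the Coxeter number $h$: $e=2n$ for $B_n$ and $C_n$, $6$ for $G_2$, $12$ for $F_4$, $18$ for $E_7$, and $30$ for $E_8$. Then $\Phi_e(q)$ divides $|S|$, so $r\mid|S|$. Every Levi factor of a proper parabolic has all its degrees strictly below $h$, so $r$ divides neither $|P_i|$ nor $|P_j|$. This holds for any pair of maximal parabolics, and we take, say, the end nodes $P_1$ and $P_\ell$. Next, $r\equiv1\pmod{ef}$ gives $r>f$ and $r\nmid2$, and $r\nmid d$ because $r\mid q-1$ would force $\ord_r(p)\mid f$. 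Hence $\vp{r}(x)=0<1\le\eta$, and Theorem~\ref{thm:D} applies. To check that $\Phi_h(q)$ divides $|S|$ and avoids the Levi orders, we read off the order formulas cited in the common inputs (the degrees $2,4,\dots,2n$ for $B_n,C_n$; $2,6$ for $G_2$; $2,6,8,12$ for $F_4$; $2,6,8,10,12,14,18$ for $E_7$; and $2,8,12,14,18,20,24,30$ for $E_8$). We also confirm that $\Phi_h$ is not absorbed by the central divisor.

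\emph{Exceptions.} Zsigmondy fails only for $p^{ef}=2^6$, which means $e=6$ and $f=1$. This gives $G_2(2)$ (not simple, and excluded here anyway) and $C_3(2)=\Sp(6,2)$. For $\Sp(6,2)$ we substitute $r=7$, where $7\mid2^3-1$: we have $\vp{7}(|\Sp(6,2)|)=1$, and we must choose the parabolics so that neither has a $\GL_3(2)$ Levi. The parabolics with Levis $\Sp_4(2)\times\GL_1$ and $\GL_2(2)\times\Sp_2(2)$ have $7$-free orders, and $x=1$. If this choice is awkward, $\Sp(6,2)$ falls under the finite base of Proposition~\ref{prop:base}. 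Small cases such as $\PSp(4,3)$ are handled by the same argument, or by the certificates of Appendix~\ref{app:finite} if needed.

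\emph{Main obstacle.} We expect the hardest step to be the clean justification that diagonal and field automorphisms fix each maximal-parabolic class. This requires the automorphism structure $\Aut(S)=\Inn\cdot\mathrm{Diag}\cdot\Phi\cdot\Gamma$ with $\Gamma$ trivial in these types, citing \cite{Carter} and \cite{GLS3}. A second point is the bookkeeping for $n=2$, $p$ odd, and the type $C_n$ versus $B_n$ Levi degrees; the Levi degree bound $<h$ covers both uniformly.
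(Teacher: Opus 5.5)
Your proposal is correct and follows essentially the same route as the paper. Both arguments take two end-node maximal parabolics and get stability from the fact that diagonal and field automorphisms fix every node type when there is no diagram symmetry. Both then use a primitive prime divisor of $p^{hf}-1$, with $h$ the Coxeter number, which avoids every proper Levi order and the outer factor, and both isolate the Zsigmondy exception $p^{hf}=2^6$. The one difference is that you settle $\Sp(6,2)$ directly with $r=7$ and the pair $(P_1,P_2)$, correctly avoiding the $\GL_3(2)$-Levi parabolic, whereas the paper routes that group to the finite certificates of Proposition~\ref{prop:base}; your computation ($\vp{7}(|\Sp(6,2)|)=1$, $7\nmid|P_1||P_2|$, $x=1$) is valid.
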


\begin{proof}
\emph{Torus exponent and selected classes.}
Write $h$ for the relevant torus exponent: $h=2n$ for $B_n/C_n$, and
$h=6,12,18,30$ for $G_2,F_4,E_7,E_8$.

Let $P_1,P_2$ be maximal parabolic subgroups from two distinct classes
(rank $\geq2$ guarantees these; take the ends of the diagram). They are
maximal subgroups of $S$: any overgroup of a parabolic contains a Borel
subgroup, and every subgroup containing a Borel is parabolic
\cite[Thm.~8.3.2, p.~112]{Carter}.

\emph{Maximality and automorphism stability.} $\Aut(S)$ is generated by inner, diagonal, field and
graph automorphisms \cite{Steinberg},
\cite[Def.~3.1 and Thm.~3.4, pp.~31--32]{BrotoMollerOliver2019}. Diagonal
and field automorphisms preserve types of vertices of the building,
hence each parabolic class; type-permuting automorphisms arise only from
diagram symmetries, absent for the listed types in the listed
characteristics (the exceptional symmetries of $B_2/C_2$, $F_4$ in
characteristic~$2$ and $G_2$ in characteristic~$3$ are excluded by
hypothesis and treated in Theorem~\ref{thm:graph}). So every parabolic
class is $X$-stable for every $X$.

\emph{Obstruction prime, occurrence, subgroup avoidance, and outer
contribution.} Let $r$ be a primitive prime divisor of
$p^{hf}-1$. Then:
(i) $r\mid|S|$, since the order formula of each listed type contains the
factor $q^h-1$ (equivalently $\Phi_h(q)$)
\cite[Table~6, p.~xvi]{ATLAS}.
(ii) $r$ divides no maximal parabolic order: $|P_i|=q^{N_i}|L_i|
(q-1)^{c_i}$ with $L_i$ the semisimple part of the Levi factor, whose
order is a product of a $q$-power and factors $q^j-1$ with $j\leq h_i$,
$h_i$ the largest torus exponent of $L_i$; in every listed case every
proper Levi has $h_i<h$. For $B_n$ and $C_n$, the Levi types are respectively
$A_{i-1}\times B_{n-i}$ and $A_{i-1}\times C_{n-i}$; in either case the
relevant exponent is at most $\max(i,2(n-i))<2n$. For the remaining types:
$G_2$ has Levis
$A_1$, $j\leq2$; $F_4$: Levis $B_3,C_3,A_2{\times}A_1$, $j\leq6$; $E_7$:
Levis $E_6,D_6,A_6,\dots$, exponents $\leq12$; $E_8$: Levis
$E_7,D_7,A_7$, exponents $\leq18$. By primitivity $r$ divides none of
these, nor $q$, nor $q-1$.
(iii) $r\nmid x$: $r\equiv1\pmod{hf}$ with $h\geq4$, so
$r>hf\geq4f\geq d_S f\geq x$, where $d_S\leq\gcd(2,q-1)^{\,2}\leq4$
bounds the diagonal part and there is no graph part.

Thus $\eta=\vp{r}(|S|)\geq1>0=\vp{r}(x)$ for the pair $([P_1],[P_2])$
whenever $r$ exists.

\emph{Zsigmondy and small-parameter exceptions.} $p^{hf}=2^6$ forces $p=2$ and $hf=6$:
either $G_2(2)$ (not simple; $G_2(2)'\cong\PSU(3,3)$ is in the finite
base) or $\Sp(6,2)$ (finite base). For $h\in\{12,18,30\}$, $hf=6$ is
impossible; $B_n$ has $q$ odd.

\emph{Conclusion.} Theorem~\ref{thm:D} applies to every nonexceptional
parameter, and Appendix~\ref{app:finite} supplies the two routed cases.
\end{proof}

\subsection{Twisted groups of twisted rank at least two}\label{app:twisted2}

\begin{theorem}\label{thm:twisted2}
Let $S$ be one of
\[
\begin{gathered}
\PSU(n,q)\ (n\geq4),\quad
P\Omega^-(2n,q)={}^2D_n(q)\ (n\geq4),\\
{}^3D_4(q),\quad {}^2E_6(q),\quad
{}^2F_4(q)\ (q=2^f,\ f\ \text{odd}\geq3).
\end{gathered}
\]
Then $S$ is excluded.
\end{theorem}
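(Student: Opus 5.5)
We follow the template of Theorem~\ref{thm:nograph}. We take two maximal parabolic subgroups $P_1,P_2$ from distinct classes, corresponding to two distinct orbits of the twisting symmetry on the Dynkin diagram; twisted rank at least two guarantees two such relative node types. They are maximal in $S$ because every overgroup of a Borel subgroup is parabolic for the twisted $BN$-pair. Their classes are $X$-stable for every $X$: $\Out(S)$ is generated by diagonal and field automorphisms (for ${}^3D_4$ and ${}^2F_4$ the field automorphisms include the twisting ones), and these preserve types in the twisted building. So, by Remark~\ref{rem:maxauto}, only the valuation inequality $\eta>\vp{r}(x)$ of Theorem~\ref{thm:D} needs proof.

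\emph{Obstruction prime.} We choose a primitive prime divisor $r$ of $p^{ef}-1$, where $e$ is the largest cyclotomic index appearing in $|S|$.
\begin{itemize}
\item $\PSU(n,q)$: take $e=2n$ for $n$ odd and $e=2(n-1)$ for $n$ even. In both cases this is the largest $j$ with $\Phi_j(q)$ dividing $\prod_i(q^i-(-1)^i)$.
\item ${}^2D_n(q)$: take $e=2n$, coming from $q^n+1$.
\item ${}^3D_4(q)$: take $e=12$, coming from $\Phi_{12}(q)=q^4-q^2+1$.
\item ${}^2E_6(q)$: take $e=18$, coming from $q^9+1$.
\item ${}^2F_4(q)$: take $e=12$ for $p=2$, that is, a primitive divisor of $2^{12f}-1$, which divides $q^6+1$.
\end{itemize}
Occurrence of $r$ in $|S|$ then follows from the order formulas.

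\emph{Avoidance and outer contribution.} Avoidance comes from the Levi types of the chosen relative maximal parabolics. For $\PSU$ these are $\GL_i(q^2)\times\mathrm{U}_{n-2i}(q)$. Their cyclotomic indices are at most $\max(2i,\,2(n-2i)\text{ or so})$. Taking the end parabolics $i=1$ and $i=\lfloor n/2\rfloor$ keeps every index below $e$. For $\PSU(4,q)$ with $e=6$, the Levis $\GL_1(q^2)\times\mathrm U_2(q)$ and $\GL_2(q^2)$ give indices $\leq4$. The analogous checks for the other families are:
\begin{itemize}
\item ${}^2D_n$: Levis $\GL_i(q)\times\Omega^-_{2(n-i)}$. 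The top index is $2(n-1)<2n$, or $i\leq n-1$, which gives at most $\max(i,2(n-i))<2n$.
\item ${}^3D_4$: Levis $A_1(q^3)$ and $A_1(q)$, with indices $\leq6$.
\item ${}^2E_6$: Levis of types ${}^2A_5$, ${}^2D_4$, $A_2\times A_1$ and so on, with indices $\leq12$.
\item ${}^2F_4$: Levis ${}^2B_2(q)$ and $A_1(q)$, with indices $\leq4$.
\end{itemize}
For the outer contribution, $x$ divides $d\cdot ef'$, where $f'$ is the field-automorphism order ($2f$ for unitary, $3f$ for triality). The congruence $r\equiv1\pmod{ef}$ gives $r>ef$, and $r\nmid d$ because $d\mid q\pm1$ has small order. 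Hence $\vp{r}(x)=0<1\leq\eta$.

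\emph{Main obstacle.} The step I expect to be hardest is bookkeeping the Zsigmondy exception $p^{ef}=2^6$ together with the small-parameter cases. This exception arises only for $\PSU(4,2)$, where $e=6$, $f=1$, and for $\PSU(3,\cdot)$, which is not in range. We route $\PSU(4,2)\cong\PSp(4,3)$ to the finite base, or treat it with $r=5$ directly. We must also check carefully that the unitary Levi exponents stay strictly below $e$ for both parities of $n$, especially for $n=5$. With these checks in place, Theorem~\ref{thm:D} applies to every listed $S$ and every $X$.
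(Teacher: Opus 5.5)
Your proposal is essentially the paper's proof: two maximal parabolic classes, stable under every $X$ because these twisted groups have only diagonal and field outer automorphisms and these preserve relative types. It uses the same primitive-prime exponents $e$ for each family and the same outer-contribution bound, and it routes the sole Zsigmondy exception $\PSU(4,2)\cong\PSp(4,3)$ to the finite certificate; your choice of the end parabolics $i=1,\lfloor n/2\rfloor$ for $\PSU(n,q)$ in place of the paper's $i=1,2$ works equally well. Drop the alternative ``treat it with $r=5$ directly'', though: $5=\Phi_4(2)$ divides the Levi $\SL_2(4)$ of the isotropic-line stabilizer $2^4{:}A_5$ of order $960$, as your own index-$4$ bound for $\GL_2(q^2)$ already shows, so the gap at $5$ is $1-0-1=0$ (the prime $3$ would work instead, with gap $4-2-1=1$).
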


\begin{proof}
\emph{Selected classes and maximality.} Each of these groups has a twisted $BN$-pair of rank at least $2$, and
therefore has at least two classes of maximal parabolic subgroups $P_1,P_2$.
These subgroups are maximal in $S$, by the same argument as
in Theorem~\ref{thm:nograph}: every finite group of Lie type in defining
characteristic has a split $BN$-pair
\cite[Example~1.16 and \S1.3.5, pp.~10--11]{Hiss2011}, and Carter's
overgroup and self-normalizer theorems are stated for an arbitrary group
with a $BN$-pair \cite[Thms.~8.3.2--8.3.3, p.~112]{Carter}.

\emph{Automorphism stability.}
\emph{Parabolic preservation.} Parabolic subgroups are precisely the
overgroups of Borel subgroups, which are the normalizers of Sylow
$p$-subgroups in defining characteristic
\cite[Example~1.16 and \S1.3.5, pp.~10--11]{Hiss2011}. This description is
automorphism-invariant. Hence every automorphism permutes the parabolic
classes and acts on the relative type set.

\emph{Relative-type preservation.} For a twisted group, the standard
automorphism decomposition has no separate graph factor
\cite[Def.~3.3 and Thm.~3.4, p.~32]{BrotoMollerOliver2019}. The
inner-diagonal and Frobenius representatives centralizing the defining
Steinberg endomorphism preserve each orbit of simple roots, hence each node of
the relative diagram. They normalize the standard Borel and torus, so they
map every standard parabolic $Q_J$ to a conjugate of the same relative type.
Inner automorphisms fix classes. Consequently every parabolic class is
$X$-stable for every $X$.

\emph{Rank-two diagnostic.} In ${}^2F_4(q)$ the two maximal parabolic
classes also cannot be interchanged because their Levi factors
${}^2B_2(q)$ and $\SL_2(q)$ are non-isomorphic
\cite[Thm.~4.5(i)--(ii), p.~166]{Wilson2009}. Diagram asymmetry alone would
not suffice: the relative diagrams of $\PSU(4,q)$ and $\PSU(5,q)$ have a
symmetric underlying graph, and the untwisted group $\Sp(4,2^f)$ admits a
type-swapping automorphism.

\emph{Obstruction prime, occurrence in $|S|$, and subgroup avoidance.}
For each type, choose the exponent $e$ and
let $r$ be a primitive prime divisor of $p^{e}-1$. In each case $r$ divides
$|S|$ but neither $|P_1|$ nor $|P_2|$:
\begin{itemize}
\item $\PSU(n,q)$, $n$ odd: $e=2nf$, so $r\mid\Phi_{2n}(q)\mid q^n+1$,
which divides $|\SU(n,q)|=q^{n(n-1)/2}\prod_{i=2}^n(q^i-(-1)^i)$. The
Levi factor of $P_i$ ($i=1,2$) is a central product of $\GL_i(q^2)$ and
$\GU(n-2i,q)$: its $p'$-order is a product of factors $q^{2j}-1$
($j\leq2$) and $q^j-(-1)^j$ ($j\leq n-2$), all dividing $p^m-1$ with
$m\leq\max(4f,2(n-2)f)<2nf$.
These order and Levi formulas are recorded in
\cite[\S2.2, p.~x, and \S3.6, p.~xv]{ATLAS}.
\item $\PSU(n,q)$, $n$ even: $e=2(n-1)f$,
$r\mid\Phi_{2(n-1)}(q)\mid q^{n-1}+1$ (the $i=n-1$ factor). Levi
exponents are $\leq\max(4f,2(n-3)f,(n-2)f)<2(n-1)f$ for $n\geq4$
\cite[\S2.2, p.~x, and \S3.6, p.~xv]{ATLAS}.
\item ${}^2D_n(q)$, $n\geq4$: $e=2nf$, $r\mid\Phi_{2n}(q)\mid q^n+1$,
which divides $q^{n(n-1)}(q^n+1)\prod_{i=1}^{n-1}(q^{2i}-1)$.
The Levi factor of $P_i$ has type
$\GL_i(q)\times O^-(2(n-i),q)$, and its relevant exponents are at most
$\max(2f,2(n-1)f)<2nf$
\cite[\S2.4, p.~xii, and \S3.6, p.~xv]{ATLAS}.
\item ${}^3D_4(q)$: $e=12f$, $r\mid\Phi_{12}(q)=q^4-q^2+1$, which
divides $q^8+q^4+1=(q^4+q^2+1)(q^4-q^2+1)$. Since
$|{}^3D_4(q)|=q^{12}(q^8+q^4+1)(q^6-1)(q^2-1)$, we have
$r\mid|S|$. The two Levi factors
are $\SL_2(q^3)\cdot(q-1)$ and $\SL_2(q)\cdot(q^3-1)$ (mod centers):
$p'$-orders divide $(q^6-1)(q-1)$ and $(q^2-1)(q^3-1)$, exponents
$\leq6f<12f$ \cite[Table~6, p.~xvi, and \S3.6, p.~xv]{ATLAS}.
\item ${}^2E_6(q)$: $e=18f$, $r\mid\Phi_{18}(q)\mid q^9+1$, which
divides $q^{36}(q^{12}-1)(q^9+1)(q^8-1)(q^6-1)(q^5+1)(q^2-1)$. The four
maximal parabolic classes correspond to the four $\tau$-orbits of nodes
of $E_6$ ($\tau$ the order-two symmetry): $\{2\}$, $\{4\}$, $\{1,6\}$,
$\{3,5\}$; the complementary $\tau$-stable subdiagrams are $A_5$,
$A_2{\times}A_1{\times}A_2$, $D_4$, $A_2{\times}A_1{\times}A_1$ with induced
twists, so the Levi derived groups are $\SU(6,q)$,
$\SL_2(q){\times}\SL_3(q^2)$, ${}^2D_4(q)$-type, and
$\SL_3(q){\times}\SL_2(q^2)$, with cyclotomic exponents
$\leq10f,\,6f,\,8f,\,4f$ respectively, all $<18f$. Any two classes
serve as $P_1,P_2$ \cite[Table~6, p.~xvi, and \S3.6, p.~xv]{ATLAS}.
This diagram derivation agrees with the published list
$({}^2A_5(q),{}^2D_4(q),A_1(q){\times}A_2(q^2),
A_1(q^2){\times}A_2(q))$ in
\cite[Lemma~6.8, p.~82]{Montinaro2024}.
\item ${}^2F_4(q)$, $f$ odd $\geq3$: $e=12f$,
$r\mid\Phi_{12}(q)\mid q^6+1$, which divides
$|{}^2F_4(q)|=q^{12}(q^6+1)(q^4-1)(q^3+1)(q-1)$. The two Levi derived
groups are ${}^2B_2(q)\cong\Sz(q)$ and $\SL_2(q)$
\cite[Thm.~4.5(i)--(ii), p.~166]{Wilson2009} and
\cite[Main Theorem, pp.~52--53]{Malle1991}: $p'$-orders divide
$(q^2+1)(q-1)^2$ and
$(q^2-1)(q-1)$, exponents $\leq4f<12f$. (${}^2F_4(2)$ is not simple;
its derived group, the Tits group, is in the finite base.)
\end{itemize}

\emph{Outer contribution.} In every case $x$ divides $d_S\cdot c f$, where the
diagonal order $d_S$ divides $q+1$ (types ${}^2A$, ${}^2E_6$) or $4$
(type ${}^2D$) or is $1$ (${}^3D_4$, ${}^2F_4$), and the field part has
order $cf$ with $c\leq3$
\cite[Table~5, p.~xvi]{ATLAS} and
\cite[Def.~3.3 and Thm.~3.4, p.~32]{BrotoMollerOliver2019}.
Since $r\equiv1\pmod e$ with $e\geq6f$, $r$
exceeds the field-part order. If $r\mid d_S$, then either $r\mid q+1$,
which would give $\ord_r(p)\mid2f<e$, or $r\leq4$, contradicting
$r>e\geq6$. Hence
$\eta=\vp{r}(|S|)\geq1>0=\vp{r}(x)$ whenever the primitive prime exists.

\emph{Zsigmondy and small-parameter exceptions.} $r$ fails to exist only if $p^e=2^6$. Here
$e\geq6$ always, and $e=6$ occurs only for $\PSU(4,q)$ with $f=1$; then
$p^6=2^6$ forces $S=\PSU(4,2)\cong\PSp(4,3)$
\cite[Thm.~5.3, p.~61]{CameronNotes}; this group, of order $25920$, is
settled by the finite certificate of Appendix~\ref{app:finite}. No other case
reaches $p^e=2^6$.

\emph{Conclusion.} Theorem~\ref{thm:D} applies to every uniform branch,
and the sole exceptional simple group is routed to the exact finite record.
\end{proof}

\subsection{Twisted groups of twisted rank one}\label{app:twisted1}

\begin{theorem}\label{thm:twisted1}
Let $S$ be one of
\[
\PSU(3,q)\ (q\geq3),\quad
\Sz(q)={}^2B_2(q)\ (q=2^f,\ f\ \text{odd}\geq3),\quad
{}^2G_2(q)\ (q=3^f,\ f\ \text{odd}\geq3).
\]
Then $S$ is excluded.
\end{theorem}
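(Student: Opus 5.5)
We follow the audit order of Appendix~\ref{app:families} and treat the three rank-one families separately, always taking as selected classes the Borel subgroup $P=N_S(U)$ (normalizer of a Sylow $p$-subgroup) and one further geometric maximal subgroup. Both classes consist of maximal subgroups of $S$, so Remark~\ref{rem:maxauto} removes the poset and saturation hypotheses. Only $X$-stability and the valuation inequality of Theorem~\ref{thm:D} remain to be checked.

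\emph{Selected classes and stability.} The Borel class is characteristic, being the class of Sylow-$p$ normalizers, so it is $X$-stable for every $X$. For the second class we take:
\begin{itemize}
\item for $\PSU(3,q)$, the normalizer of a nonsplit maximal torus of order $(q^2-q+1)/d$, namely $(q^2-q+1)/d\rtimes 3$, with $d=\gcd(3,q+1)$;
\item for $\Sz(q)$, the normalizer $(q+\sqrt{2q}+1)\rtimes 4$ of the corresponding cyclic torus;
\item for ${}^2G_2(q)$, the torus normalizer $(q+\sqrt{3q}+1)\rtimes 6$.
\end{itemize}
Maximality is read off from the published lists: \cite{BHRD} for $\PSU(3,q)$, Suzuki for $\Sz(q)$, and Kleidman for ${}^2G_2(q)$. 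Small $q$ are excluded or routed to the finite base; in particular $q=3,4,5$ for $\PSU(3,q)$ go to the certificates of Appendix~\ref{app:finite}. The torus class is the unique class of normalizers of cyclic subgroups of that order. Stability follows because all automorphisms (diagonal and field) preserve this class, using $\Out(S)=C_d\rtimes C_{2f}$, $C_f$, and $C_f$ respectively.

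\emph{Obstruction prime.} In the unitary case let $r$ be a primitive prime divisor of $p^{6f}-1$. Then $r\mid\Phi_6(q)=q^2-q+1$, so $r\mid|S|$. The Borel order $q^3(q^2-1)/d$ is not divisible by $r$. The torus normalizer order is divisible by $r$, so the torus cannot serve as the second class. We therefore switch the prime to the defining prime $p$, as in Case~A of Theorem~\ref{thm:psl2}, and replace the Borel by a second $p'$-subgroup: the stabilizer of a nonisotropic point, $\mathrm{GU}(2,q)/\ldots$. The obstacle is that this subgroup has $p$-part $q$. So instead we keep the Borel together with the torus normalizer and use $r\mid q+1$ coming from a primitive prime divisor of $p^{2f}-1$. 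Then $\vp{r}(|S|)\geq\vp{r}((q+1)^2/d)$, while the Borel contributes $\vp{r}(q+1)-\vp{r}(d)$ and the torus normalizer contributes $0$ when $r\neq3$. This leaves $\eta\geq\vp{r}(q+1)\geq1$. Moreover $r\equiv1\pmod{2f}$ gives $r\nmid 2f$, and $r\nmid d$ whenever $r\neq3$.

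For $\Sz(q)$ and ${}^2G_2(q)$ we use the same two-class pattern. In $\Sz(q)$, with $|S|=q^2(q^2+1)(q-1)$, the pair consists of the Borel of order $q^2(q-1)$ and the $(q+\sqrt{2q}+1)$-normalizer; taking $r\mid q-\sqrt{2q}+1$ a primitive divisor of $2^{4f}-1$ gives $\eta\geq1$ and $r>4f>x$. In ${}^2G_2(q)$, with $|S|=q^3(q^3+1)(q-1)$, we use the Borel together with the $(q+\sqrt{3q}+1)$-normalizer and take $r\mid q-\sqrt{3q}+1$ a primitive divisor of $3^{6f}-1$.

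\emph{Main obstacle.} The hardest step is the unitary case. There $r=3$ or $r\mid d$ can occur, and Zsigmondy fails for $p^{2f}=2^2$ or when $p^f+1$ is a power of $2$ (Mersenne/Fermat-type cases). I would first try to handle these by choosing $r$ instead as a primitive prime divisor of $q^2-q+1$, paired with the Borel and a nonisotropic-point stabilizer $\mathrm{GU}(2,q)$-type subgroup. That subgroup's order $q(q+1)^2(q-1)/d$ avoids $\Phi_6$, giving $\eta=\vp{r}(|S|)\geq1>0=\vp{r}(x)$ since $r\equiv1\pmod{6f}$. In fact this pair (Borel, nonisotropic-point stabilizer, $r\mid\Phi_6(q)$) works uniformly, with the only Zsigmondy exception being $q=2$, which is not simple. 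I would adopt it as the primary choice for $\PSU(3,q)$ and route $q\in\{3,4,5\}$ to the finite certificates if maximality fails there.
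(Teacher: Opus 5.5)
Your unitary case, once you settle on it in the last paragraph, is the same as the paper's: the Borel subgroup plus the nonisotropic-point stabilizer, with $r$ a primitive prime divisor of $p^{6f}-1$. The earlier torus and $q+1$ detour should simply be deleted.

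\textbf{The gap is in the Suzuki case, and the same unsupported step appears in the Ree case.}

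In $\Sz(q)$ you take a primitive prime divisor $r$ of $2^{4f}-1$ dividing $q-\sqrt{2q}+1$. Zsigmondy only places such a prime in $q^2+1=(q+\sqrt{2q}+1)(q-\sqrt{2q}+1)$. It may lie in the factor whose torus normalizer you selected as the second class, and this actually happens:
\begin{itemize}
\item For $q=8$: $\Phi_{12}(2)=13=q+\sqrt{2q}+1$, while $q-\sqrt{2q}+1=5$ and $\ord_5(2)=4$, so $5$ is not primitive.
\item For $q=32$: $\Phi_{20}(2)=5\cdot 41$, so the only primitive prime divisor of $2^{20}-1$ is $41=q+\sqrt{2q}+1$, while $q-\sqrt{2q}+1=25$.
\end{itemize}
So for your fixed pair $\bigl(B,(q+\sqrt{2q}+1){:}4\bigr)$, the prime you describe does not exist. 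Moreover $\Sz(32)$ has order $32537600>1.05\times 10^7$, so it is not in the finite base and is left uncovered. (For that one group the prime $r=5$ would rescue your pair, since $\vp{5}(|S|)=2>1\geq\vp{5}(x)$; but that is a patch, not an argument.)

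In ${}^2G_2(q)$ you likewise assume a primitive prime divisor of $3^{6f}-1$ lying in $q-\sqrt{3q}+1$. Zsigmondy does not give this either. You would need a Schinzel-type theorem on primitive divisors of Aurifeuillian factors.

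\textbf{Two easy repairs.}
\begin{itemize}
\item Choose the second class so that its order is coprime to the whole cyclotomic factor, as the paper does. Use $D_{2(q-1)}$ in $\Sz(q)$, and $C_S(\iota)\cong 2\times\PSL(2,q)$, of order $q(q^2-1)$, in ${}^2G_2(q)$. Then any primitive prime divisor of $2^{4f}-1$, respectively $3^{6f}-1$, avoids both selected orders.
\item Keep torus normalizers but choose them after the prime. The two Aurifeuillian factors are coprime: each is odd, they are both $\equiv 1 \pmod 3$ in the Ree case, and their difference is $2\sqrt{2q}$, respectively $2\sqrt{3q}$. So a primitive prime divisor $r$ divides exactly one of them, and you take the normalizer of the other torus. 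Both $(q\pm\sqrt{2q}+1){:}4$, respectively $(q\pm\sqrt{3q}+1){:}6$, are maximal and form single $\Aut(S)$-stable classes for $q\geq 8$, respectively $q\geq 27$.
\end{itemize}
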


\begin{proof}
\emph{Common selected class and stability.} Each group has a single class of parabolic subgroups, namely the Borel
subgroups $B=N_S(Q)$, $Q\in\Syl_p(S)$; $B$ is a \emph{maximal}
subgroup (rank-one $BN$-pair) and its class is $X$-stable for every $X$.
We pair it with a second $X$-stable maximal class avoiding a Zsigmondy
prime.

\emph{$\PSU(3,q)$---second class, maximality, and stability.}
For $q\geq8$, $|S|=q^3(q^3+1)(q^2-1)/d$,
$d=\gcd(3,q+1)$; $|B|=q^3(q^2-1)/d$. Let $V$ be the stabilizer of a
nonisotropic point of the unitary geometry. It has order
$q(q-1)(q+1)^2/d$ and is maximal for $q\geq3$, $q\neq5$
\cite[Table~8.5, p.~379]{BHRD} (class $\mathcal{C}_1$). These stabilizers
form a single class, preserved by every semilinear automorphism.

\emph{$\PSU(3,q)$---prime, occurrence, avoidance, and outer contribution.}
Take $r$ a primitive prime divisor of $p^{6f}-1$. It exists because
the exceptional equation $p^{6f}=2^6$ would force $q=2$, for which $S$
is not simple. Then $r\mid\Phi_6(q)=q^2-q+1$,
which divides $(q^3+1)/(q+1)$, and hence $r\mid|S|$;
$\ord_r(p)=6f$ rules out
$r$ dividing $q(q^2-1)$ or $q(q-1)(q+1)^2$. Also $x\mid2df$, and
$r\equiv1\pmod{6f}$ gives $r>2f$, while $r\mid d\mid q+1$ would force
$\ord_r(p)\mid2f$. Thus $\eta:=\vp{r}(|S|)\geq1>0=\vp{r}(x)$.
\emph{$\PSU(3,q)$---small parameters and conclusion.}
For $q\in\{3,4,5,7\}$ (in particular the maximality exception $q=5$),
$S$ is in the finite base. Theorem~\ref{thm:D} handles every other parameter.

\emph{$\Sz(q)$---selected class, maximality, and stability.} Here $|S|=q^2(q^2+1)(q-1)$; $|B|=q^2(q-1)$. Let
$V:=D_{2(q-1)}$ be the normalizer of a cyclic subgroup of order $q-1$.
It is maximal by Suzuki's classification \cite{Suzuki1962}; see also
\cite[Thm.~7.3.5, p.~367, and Table~8.16, p.~384]{BHRD}. These
normalizers form a single class, stable under $\Aut(S)=S\rtimes C_f$.

\emph{$\Sz(q)$---prime, occurrence, avoidance, outer contribution, and
exceptions.} Let $r$ be a primitive prime divisor of $2^{4f}-1$. Such a prime exists
because $4f\geq12$. Then $r\mid\Phi_4(q)=q^2+1$, so $r\mid|S|$,
$r\nmid|B|$, $r\nmid|V|$; $x\mid f$, and
$r\equiv1\pmod{4f}$ gives $r>f$. Thus
$\vp{r}(|S|)\geq1>0=\vp{r}(x)$, and Theorem~\ref{thm:D} applies. The
minimum exponent is $4f\geq12$, so there is no Zsigmondy exception.

\emph{${}^2G_2(q)$---selected class, maximality, and stability.} Here $|S|=q^3(q^3+1)(q-1)$; $|B|=q^3(q-1)$. Let
$V:=C_S(\iota)\cong\langle\iota\rangle\times\PSL(2,q)$ for an involution
$\iota$. The involutions form a single $S$-class, so $[V]$ is
$\Aut(S)$-stable. Moreover, $|V|=q(q^2-1)$, and $V$ is maximal
\cite[Thm.~C, pp.~33--34]{Kleidman1988}.

\emph{${}^2G_2(q)$---prime, occurrence, avoidance, outer contribution, and
exceptions.} Let $r$ be a primitive prime divisor of $3^{6f}-1$. Such a prime exists because $6f\geq18$. Then
$r\mid\Phi_6(q)=q^2-q+1$, which divides $(q^3+1)/(q+1)$, and hence
$r\mid|S|$;
$\ord_r(3)=6f$ rules out $r\mid q^3(q-1)$ and $r\mid q(q^2-1)$;
$x\mid f<r$. So $\vp{r}(|S|)\geq1>0=\vp{r}(x)$.
(${}^2G_2(3)$ is not simple; ${}^2G_2(3)'\cong\PSL(2,8)$ is covered by
Theorem~\ref{thm:psl2}.) The exponent $6f\geq18$ has no Zsigmondy
exception, so Theorem~\ref{thm:D} applies.

The three branches therefore satisfy the common eight-step template and
exclude every twisted-rank-one simple group in the theorem.
\end{proof}

\subsection{Untwisted types with graph symmetry}\label{app:graph}

\begin{theorem}\label{thm:graph}
Let $S$ be one of
\[
\PSL(n,q)\ (n\geq3),\quad P\Omega^+(2n,q)=D_n(q)\ (n\geq4),\quad
E_6(q),
\]
\[
\Sp(4,q)\ (q=2^f,\ f\geq2),\quad F_4(q)\ (q=2^f),\quad
G_2(q)\ (q=3^f).
\]
Then $S$ is excluded.
\end{theorem}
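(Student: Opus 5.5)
The proof will go family by family, and within each family split according to whether the graph part of $X$ (the image of $X$ in $\Out(S)$ modulo inner-diagonal and field automorphisms) is trivial. If the graph part is trivial, the argument of Theorem~\ref{thm:nograph} carries over unchanged. Diagonal and field automorphisms preserve building types, so any two maximal parabolic classes $[P_1],[P_2]$ are $X$-stable; for example, the end nodes, or the nodes $1,2$ when these are the ones not fused. We take $r$ to be a primitive prime divisor of $p^{hf}-1$, where $h$ is the Coxeter-type exponent: $h=n$ for $\PSL(n,q)$ (using $\Phi_n$), $2n-2$ for $D_n$, $12$ for $E_6$, $4$ for $\Sp(4,2^f)$, $12$ for $F_4$, and $6$ for $G_2$. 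This $r$ divides $|S|$, divides no proper Levi order, and satisfies $r\equiv1\pmod{hf}$. The last fact, together with the argument $r\nmid d$ from Proposition~\ref{prop:psl-flag}, gives $\vp{r}(x)=0$.

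If the graph part is nontrivial, we use the flag mechanism of Lemma~\ref{lem:P}, exactly as in Proposition~\ref{prop:psl-flag}. That proposition already settles $\PSL(n,q)$ for $n\geq4$. For $\PSL(3,q)$ with the graph involution present, the unique fused pair is $P_1,P_2$, so the Borel $B=Q_\emptyset$ is the only stable flag parabolic. Here we pair $[B]$ with a second stable class outside the parabolic world: the normalizer of a Singer-type torus, $(q^2+q+1)/d\cdot3$, which is maximal and graph-stable for $q$ large. We take $r$ to be a primitive prime divisor of $p^{2f}-1$ dividing $q+1$, which avoids both $|B|$ and $|N(T_{q^2+q+1})|$. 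Here $\vp{r}(x)=0$ once $r>2f$ and $r\nmid 3$, and the small $q$ are routed to the finite certificates.

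For $D_n$ with $n\geq5$, the graph involution swaps nodes $n-1,n$. We use $P_1$ and $P_2$ (both fixed) when $n\geq5$, taking $r$ primitive for $p^{(2n-2)f}-1$; this divides $|S|$ via $q^{2n-2}-1$, while the Levi exponents of $P_1,P_2$ are at most $\max(2n-4,\,2n-4)$ in the $D$-part and small in the $A$-part. For $D_4$ with triality in $X$, nodes $1,3,4$ are fused, so the fixed maximal parabolic $P_2$ is paired with the flag parabolic $Q_{\{2\}}$, whose overgroups $P_1,P_3,P_4$ and the pairwise intersections $Q_{\{2,i\}}$ are not stable. Here we need the prime $r$ for $\Phi_6$ or $\Phi_4$ at exponent $hf$ with $h=6$, since the Levis have exponent at most $3$. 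When $X$ has only a graph involution in $D_4$, we use a fixed node and a stable flag instead. For $E_6$, nodes $2,4$ are fixed, $P_2,P_4$ serve, and $r$ is primitive for $p^{12f}-1$. For $\Sp(4,2^f)$, $F_4(2^f)$, and $G_2(3^f)$, the graph-field involution swaps the two (respectively the end) maximal parabolics. For $\Sp_4$ and $G_2$ the only stable parabolic is then the Borel, so we pair $B$ with a stable torus normalizer of order divisible only by $q\pm1$ or $q^2\pm\sqrt{2q}+1$-type factors, and choose $r$ dividing a complementary cyclotomic factor ($\Phi_4$ respectively $\Phi_3$ or $\Phi_6$). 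For $F_4$ we use the stable flags $Q_{\{2,3\}}$ and $Q_{\{1,4\}}$, the complements of swapped node pairs, with $r$ primitive for $p^{12f}-1$.

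Throughout, the outer part $x$ is now multiplied by the graph order ($2$, or $6$ for $D_4$), so we must check that $r\nmid 6$, which holds because $r>hf\geq4$. Zsigmondy exceptions ($p^{hf}=2^6$) occur for $\PSL(6,2)$, already handled, and for $D_4(2)$ with $h=6$, which we route to the finite base as $\Omega^+(8,2)$. The main obstacle will be the rank-two and $\PSL(3,q)$ cases under graph fusion, where no proper stable flag parabolic other than $B$ exists. There we must produce a second stable maximal class (torus normalizer or subfield subgroup) that is stable under the graph-field automorphism and avoids a suitable $r$. This requires citing the maximal subgroup tables in \cite{BHRD}, and routing the small fields where these normalizers fail to be maximal.
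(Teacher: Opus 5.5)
Your architecture matches the paper's: two maximal parabolics with a primitive prime divisor of $p^{hf}-1$ in Case A, and in Case B the flags of Lemma~\ref{lem:P}, the fixed nodes of $D_n$ and $E_6$, $([P_2],[Q_{\{2\}}])$ under triality, and the $F_4$ flags $Q_{\{2,3\}},Q_{\{1,4\}}$. However, the $\PSL(3,q)$ graph branch has a genuine gap.

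\textbf{The $\PSL(3,q)$ graph branch.} Pair the Borel $B$ with the Singer normalizer $N$ of order $3(q^2+q+1)/d$. Then
\[
 |B||N|/|S| = 3/\bigl(d(q+1)\bigr),
\]
so every valuation gap must come from $q+1$, and your prime choice fails in three ways.
\begin{enumerate}
\item For $f=1$ and $q$ a Mersenne prime (e.g.\ $q=31$), there is no primitive prime divisor of $p^2-1$. This is the exponent-$2$ Zsigmondy exception, which the paper deliberately never invokes.
\item When $q+1=2^a\cdot 3$ (e.g.\ $q=11,23,47$), the only candidate is $r=3$. But $3$ divides $|N|$, contrary to your avoidance claim, and $\eta_3=0$.
\item For $q=8$ (the $2^6$ exception), the pair itself is unusable. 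Here $|B||N|/|S|=1/3$, so the only positive gap is $\eta_3=1$. For $X=\Aut(S)$ one has $x=6$ and $\vp{3}(x)=1$, so Theorem~\ref{thm:D} yields nothing.
\end{enumerate}
The first two cases can be patched with $r=2$, since $\vp{2}(q+1)\geq2>\vp{2}(x)$. The third cannot. Moreover $\PSL(3,8)$ has order $16482816$, which lies outside the finite range of Proposition~\ref{prop:base}, so it cannot be ``routed to the finite certificates''. The paper avoids all this by pairing $B$ with the split-torus normalizer $((q-1)^2/d).S_3$, which is maximal for $q\geq5$ and automorphism-stable. It takes $r$ a primitive prime divisor of $p^{3f}-1$. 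Then $r\mid\Phi_3(q)$, and $r\equiv1\pmod 3$ forces $r\geq7$, so $r\nmid 6(q-1)^2/d$. The only Zsigmondy exception is $q=4$, which is in the finite base.

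\textbf{$\Sp(4,2^f)$ and $G_2(3^f)$ under graph fusion.} Your torus normalizers are not maximal in $S$. For $\Sp(4,2^f)$, $(q\pm1)^2{:}D_8$ lies in the $\rho$-fused pair $\Sp_2(q)\wr2$, $O_4^+(q)$. For $G_2(3^f)$, the split-torus normalizer lies in the fused $\SL_3(q).2$ classes. In \cite{BHRD} these normalizers appear only as novelties of $S\langle\rho\rangle$. So the tables do not supply the hypotheses of Theorem~\ref{thm:D}. You would need an overgroup analysis to prove poset-maximality in $\Pos_X(S)$ and normal saturation, and that is not automatic. For $q=4$, for instance, $3^2{:}D_8$ is a Sylow-$3$ normalizer and lies in the $\rho$-stable subfield subgroup $\Sp_4(2)\cong S_6$. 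The paper instead uses $V=\Fix(\widetilde\rho^{\,f})\cap S$: this is $\Sz(q)$ or ${}^2G_2(q)$ for odd $f$, and a subfield subgroup for even $f$. These are maximal in $S$, so only stability needs checking.

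\textbf{Two routing errors.} First, $\Omega^+(8,2)$ has order $174182400$ and is not in the finite base. The paper handles it with $r=5$: the gap is $2$ under triality and $1$ for $([P_1],[P_2])$. Second, $\PSL(6,2)$ with trivial graph part is not covered by Proposition~\ref{prop:psl-flag}, which assumes nontrivial graph part. It needs $r=31$ with a non-end pair such as $([P_2],[P_3])$, because the $\GL_5(2)$ Levi factors of $P_1$ and $P_5$ have order divisible by $31$.
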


\begin{proof}
We keep the standard-parabolic notation $Q_J$ of Lemma~\ref{lem:P}. The
\emph{graph part} of $X$ is the image of $X$ in the quotient of
$\Out(S)$ by its inner-diagonal and field part: a subgroup of
$C_2=\langle\delta\rangle$ for $\PSL(n,q)$ ($n\geq3$), $D_n$ ($n\geq5$),
$E_6$; a subgroup of $S_3$ (triality) for $D_4$; while for $\Sp(4,2^f)$,
$F_4(2^f)$, $G_2(3^f)$, $\Out(S)$ is cyclic of order $2f$ generated by
the exceptional graph-field automorphism $\rho$ with $\rho^2=\varphi_p$
\cite[Def.~3.1(b), p.~31, and Thm.~3.4, p.~32]{BrotoMollerOliver2019},
and ``$X$ has graph part'' means the image of
$X$ in $\Out(S)$ is not contained in $\langle\rho^2\rangle$. An
automorphism acts on the building permuting vertex types by a diagram
symmetry; automorphisms with trivial graph part preserve every type. By
Convention~\ref{conv:X} we may normalize the graph part up to
$\Aut(S)$-conjugacy; for $D_4$ this lets us assume a $C_2$ graph part is
generated by the symmetry fixing nodes $1,2$ and swapping $3,4$.

The two main branches below follow the appendix template. In Case~A, each
bullet states the selected maximal parabolics and primitive prime, then checks
occurrence, Levi avoidance, the outer bound, and any exception. In Case~B,
Table~\ref{tab:graph-summary} presents those data uniformly; prose is retained
for the exceptional mechanisms that require more than a table entry.

\emph{Case A---trivial graph part: selected classes and arithmetic.}
Every parabolic class is
$X$-stable, and the argument of Theorem~\ref{thm:nograph} applies. The selected
pairs and obstruction primes are given below; the $E_6(q)$ entry refers forward
to a construction valid for every $X$.
\begin{itemize}
\item \emph{$\PSL(n,q)$: pair $([P_1],[P_2])$, $r$ a primitive prime divisor
of $p^{nf}-1$.} Then $r\mid\Phi_n(q)\mid q^n-1$, and this cyclotomic
factor divides $|S|\cdot d$, where $d=\gcd(n,q-1)$. The Levi factors,
of $\GL_{n-1}(q)$-type and $\GL_2{\times}\GL_{n-2}$-type, have
cyclotomic exponents at most $(n-1)f<nf$. Moreover, $x\mid2df$. The
congruence $r\equiv1\pmod{nf}$ rules out $r\mid2f$, since
$r>nf\geq3f>2f$, and also rules out $r\mid d$, since that would imply
$r\mid q-1$ and hence $\ord_r(p)\mid f$. The Zsigmondy exceptions occur
when $p^{nf}=2^6$. If $(n,f)=(3,2)$, then $S=\PSL(3,4)$, which is in the
finite base for all ten $X$. If $(n,f)=(6,1)$, then $S=\PSL(6,2)$;
take instead $r=31$ ($\ord_{31}(2)=5$) and the pair
$([P_2],[P_3])$. Their Levi types $\GL_2{\times}\GL_4$ and
$\GL_3{\times}\GL_3$ have exponents at most $4<5$, so
$31\nmid|P_2||P_3|$, whereas $\vp{31}(|\PSL(6,2)|)=1$ and $x\mid2$.

\item \emph{$D_n(q)$, $n\geq4$: pair $([P_1],[P_2])$ (singular-point and
singular-line stabilizers).} Let $r$ be a primitive prime divisor of
$p^{2(n-1)f}-1$. Then $r\mid\Phi_{2n-2}(q)$, which divides
$q^{n(n-1)}(q^n-1)\prod_{i=1}^{n-1}(q^{2i}-1)$. The Levi factors, of
$D_{n-1}$-type and $A_1{\times}D_{n-2}$-type, have exponents at most
$\max((n-1)f,2(n-2)f)<2(n-1)f$. Moreover, $x\mid24f$, accounting for
diagonal, field, and graph factors of orders at most $4$, $f$, and $6$,
respectively. Since $r\equiv1\pmod{2(n-1)f}$ and $2(n-1)\geq6$, we
have $r>6f$ and $r\nmid24$. The only Zsigmondy exception is
$p^{2(n-1)f}=2^6$, which gives $(n,f)=(4,1)$ and
$S=\Omega^+(8,2)$; Case~B3 handles this group for every $X$.

\item \emph{$E_6(q)$.} The pair used in Case~B4 is stable for every $X$.

\item \emph{$\Sp(4,2^f)$.} Take the pair $([P_1],[P_2])$, and let $r$
be a primitive prime divisor of $2^{4f}-1$. Such a prime exists because
$4f\geq8$. It divides
$\Phi_4(q)=q^2+1$. The Levi factors $\Sp_2(q)$ and $\GL_2(q)$ have
exponents at most $2f<4f$; here $x\mid f$ and $r>4f$.

\item \emph{$F_4(2^f)$.} Take the pair $([P_1],[P_2])$, and let $r$ be a
primitive prime divisor of $2^{12f}-1$. It divides $\Phi_{12}(q)$. All
proper Levi factors, of types
$B_3$, $C_3$, $A_2{\times}A_1$, and $A_1{\times}A_2$, have exponents
at most $6f<12f$, and $x\mid f<r$. This completes the $p=2$ case
excluded from Theorem~\ref{thm:nograph}.

\item \emph{$G_2(3^f)$.} Take the pair $([P_1],[P_2])$, and let $r$ be a
primitive prime divisor of $3^{6f}-1$. It divides $\Phi_6(q)$. Levi factors
of type $A_1$ have
exponents at most $2f<6f$, and $x\mid f<r$. This is the $p=3$ case of
$G_2$.
\end{itemize}

\emph{Case B---nontrivial graph part: stable-class mechanism.}
The flag-parabolic substitutes below are standard parabolics $Q_J$ handled by
Lemma~\ref{lem:P}. Their node sets are invariant under the relevant diagram
symmetry, while their proper parabolic overgroups occur in nontrivial graph
orbits. The lemma therefore supplies stable-poset maximality and normal
saturation. Table~\ref{tab:graph-summary} gives the complete branch roadmap;
the prose following it is reserved for triality, invariant flags, and the
exceptional graph--field fixed-point constructions.

\begingroup
\footnotesize
\setlength{\tabcolsep}{3pt}
\renewcommand{\arraystretch}{1.13}
\begin{longtable}{@{}L{0.105\textwidth}L{0.295\textwidth}L{0.265\textwidth}L{0.255\textwidth}@{}}
\caption{Nontrivial-graph branches: selected classes, arithmetic, and exception routing.}
\label{tab:graph-summary}\\
\toprule
Branch & Stable classes and maximality mechanism & Obstruction prime and Levi avoidance & Outer contribution and exceptions \\
\midrule
\endfirsthead
\multicolumn{4}{c}{\tablename~\thetable\ (continued)}\\
\toprule
Branch & Stable classes and maximality mechanism & Obstruction prime and Levi avoidance & Outer contribution and exceptions \\
\midrule
\endhead
\midrule
\multicolumn{4}{r}{Continued on next page}\\
\endfoot
\bottomrule
\endlastfoot
B1: $\PSL(3,q)$
& The Borel $B=Q_{\varnothing}$ is a flag-parabolic substitute because $P_1,P_2$ are interchanged. The split-torus normalizer
$N_T\cong((q-1)^2/d).S_3$, $d=\gcd(3,q-1)$, is maximal for $q\geq5$ and its class is automorphism-stable
\cite{Mitchell1911,Hartley1925}; see also
\cite[Table~8.3, p.~378]{BHRD}.
& Let $r$ be a primitive prime divisor of $p^{3f}-1$. Then
$r\mid\Phi_3(q)$, $r\nmid|B|$, and
$r\nmid|N_T|=6(q-1)^2/d$ because $r\equiv1\pmod3$ gives $r\geq7$.
& $x\mid2df$, while $r>3f$ and $r\nmid d$. The cases $q\leq4$ are in the finite range; the $2^6$ exception is $\PSL(3,4)$.
\\[2pt]
B2: $\PSL(n,q)$, $n\geq4$
& For $n=4$, use $[P_2]$ and $[Q_{\{2\}}]$; for $n\geq5$, use the two invariant flag parabolics deleting $\{1,n-1\}$ and $\{2,n-2\}$. Proposition~\ref{prop:psl-flag} proves stability and maximal-supplement status.
& Use a primitive divisor of $p^{4f}-1$ when $n=4$, and of $p^{nf}-1$ otherwise. The Levi block sizes are at most $2$, $(n-2)$, and $\max(2,n-4)$, all below the primitive exponent.
& $x\mid2df$ and the primitive prime avoids $2df$. The sole exception is $\PSL(6,2)$, where $r=31$ and the same flag pair applies.
\\[2pt]
B3: $D_n(q)$
& For $n\geq5$, $[P_1],[P_2]$ are fixed by the graph involution. For $D_4$, the same pair handles trivial or normalized $C_2$ graph part; triality uses $[P_2]$ and $[Q_{\{2\}}]$. The triality orbit argument is given below.
& Use a primitive divisor of $p^{2(n-1)f}-1$; for $D_4$ the exponent is $6f$. The relevant Levi exponents are strictly smaller; triality gives $A_1^3$ and $A_1$.
& $x\mid24f$ and the primitive prime avoids this factor. The $2^6$ exception $\Omega^+(8,2)$ uses $r=5$; the two graph branches are checked below.
\\[2pt]
B4: $E_6(q)$
& The diagram involution fixes nodes $2,4$, so $[P_2],[P_4]$ are stable maximal classes for every $X$.
& Let $r$ be a primitive prime divisor of $p^{12f}-1$. Then $r\mid\Phi_{12}(q)$ occurs in $|S|$. Here the Levi of $P_2$ has type $A_5$ with exponents at most $6f$, and the $P_4$ Levi has type $A_2{\times}A_1{\times}A_2$ with exponents at most $3f$ \cite[\S8.5, pp.~118--120]{Carter}.
& With $d=\gcd(3,q-1)$, $x\mid2df\leq6f<r$. There is no Zsigmondy exception.
\\[2pt]
B5: $\Sp(4,2^f)$
& Use the Borel flag-parabolic substitute and $V=\Fix(\widetilde\rho^{\,f})\cap S$: $V=\Sz(q)$ for odd $f$, and $V=\Sp(4,2^{f/2})$ for even $f$. Maximality and stability are proved below.
& For odd $f$, use a primitive divisor of $2^{2f}-1$; for even $f$, one of $2^{4f}-1$. The fixed-point subgroup and Borel avoid it.
& $x\mid2f$. The nonsimple $f=1$ case routes to $A_6$; $q=8$ uses the substitute prime $3$. The finite $q=4$ record checks the same construction.
\\[2pt]
B6: $F_4(2^f)$
& Use the invariant flag parabolics $Q_{\{2,3\}}$ and $Q_{\{1,4\}}$; each pair of proper parabolic overgroups is interchanged by $\rho$. Details appear below.
& A primitive prime divisor of $2^{12f}-1$ avoids both selected classes. Their Levi types are $B_2$ and $A_1{\times}A_1$, with exponents at most $4f$ and $2f$ \cite[\S8.5, pp.~118--120]{Carter}.
& $x\mid2f<12f<r$. There are no exceptions.
\\[2pt]
B7: $G_2(3^f)$
& Use the Borel flag-parabolic substitute and $V=\Fix(\widetilde\rho^{\,f})\cap S$: $V={}^2G_2(q)$ for odd $f$, and $V=G_2(3^{f/2})$ for even $f$. Maximality and stability are proved below.
& For odd $f$, use a primitive divisor of $3^{3f}-1$; for even $f$, one of $3^{6f}-1$. The factors $q^6-1$ and $q^2-1$ in the group order supply the required primes.
& $x\mid2f$. There is no Zsigmondy exception; $G_2(3)$ also has a finite certificate for both coordinate closures.
\\
\end{longtable}
\endgroup

The table is a roadmap rather than a replacement for the exceptional
mechanisms. We now justify the three points that are not immediate from the
standard maximal-parabolic calculation.

\emph{Triality detail (B3).}
For $n\geq5$, the graph part is at most the $C_2$ swapping the two spinor
nodes $n{-}1,n$ and fixing $1,\dots,n{-}2$, so the Case~A pair
$([P_1],[P_2])$ is stable for every $X$. For $n=4$, the same statement
holds for trivial or normalized $C_2$ graph part. If the graph part contains
triality, use $([P_2],[Q_{\{2\}}])$. Node $2$ is fixed by all of $S_3$, so
$[P_2]$ is a stable maximal class with Levi type
$A_1{\times}A_1{\times}A_1$. The flag parabolic $Q_{\{2\}}$ has Levi type
$A_1$; its proper overgroups
$Q_{\{1,2\}},Q_{\{2,3\}},Q_{\{2,4\}}$ and $P_4,P_3,P_1$ form two regular
triality orbits on the end nodes $\{1,3,4\}$. None is $X$-stable, so
Lemma~\ref{lem:P} applies.

Let $r$ be a primitive prime divisor of $p^{6f}-1$. It avoids both classes,
$\vp{r}(|S|)\geq1$, and $r\nmid x$ as in Case~A. If $p=2$ and $f=1$, then
$S=\Omega^+(8,2)$ and $x\mid24$. Take $r=5=\Phi_4(2)$. Then
$\vp{5}(|S|)=\vp{5}\bigl(2^{12}(2^6-1)(2^4-1)^2(2^2-1)\bigr)=2$
and $\vp{5}(x)=0$.
For triality, both selected Levi orders are $\{2,3\}$-numbers, so the gap is
$2$. For trivial or $C_2$ graph part, $\vp{5}(|P_1|)=1$ and
$\vp{5}(|P_2|)=0$, so the gap is $1$.

\emph{Exceptional graph--field fixed points (B5).}
Let $S=\Sp(4,2^f)$ with $f\geq2$ and let $B$ be a Borel subgroup. Its two
proper maximal-parabolic overgroups are interchanged by $\rho$, so
Lemma~\ref{lem:P} makes $[B]$ a stable-poset maximal class. Put
\[
 V:=\Fix(\widetilde\rho^{\,f})\cap S,
\]
where $\widetilde\rho^2=\varphi_2$ and
$\widetilde\rho^{\,2f}=F_q$. Thus $V=\Sz(q)$ for odd $f$, whereas for even
$f$ it is the subfield subgroup $\Sp(4,2^{f/2})$. Both are maximal
\cite[Table~8.14, p.~384]{BHRD} and \cite{Suzuki1962}. Since
$\widetilde\rho$ commutes with its own power, $\rho(V)=V$; inner
automorphisms preserve $[V]$ by conjugation. Hence $[V]$ is
$\Aut(S)$-stable.

\emph{Case $f$ odd, $2f\neq6$.} Let $r$ be a primitive prime divisor of $2^{2f}-1$.
It divides $q+1$, so
$\vp{r}(|S|)=2\vp{r}(q+1)\geq2$, while
$\vp{r}(|B|)=\vp{r}(|\Sz(q)|)=0$ and $x\mid2f<r$. If $2f=6$, take
$r=3$; then $\vp{3}(|\Sp(4,8)|)=\vp{3}(63\cdot4095)=4>1\geq\vp{3}(x)$,
while both selected subgroups have zero $3$-valuation.

\emph{Case $f$ even.} Let $r$ be a primitive prime divisor of $2^{4f}-1$.
Writing $q_0^2=q$, one has
$|\Sp(4,q_0)|=q_0^4(q_0^2-1)(q_0^4-1)$, whose cyclotomic exponents are at
most $2f<4f$. Hence $r$ avoids both $V$ and $B$, while it divides $|S|$
and $x\mid2f<r$. The finite $\Sp(4,4)$ certificate records this same
construction with the primes $5$ and $17$.

\emph{Invariant flag parabolics (B6).}
For $S=F_4(2^f)$, the exceptional graph--field automorphism reverses
$1\!-\!2\Rightarrow3\!-\!4$. The node sets $J_1=\{2,3\}$ and
$J_2=\{1,4\}$ are invariant. The proper overgroups of $Q_{J_1}$ are
$P_4$ and $P_1$, which are interchanged; those of $Q_{J_2}$ are
$Q_{\{1,2,4\}}$ and $Q_{\{1,3,4\}}$, also interchanged and of distinct
types. Lemma~\ref{lem:P} therefore applies to both. The corresponding Levi types are $B_2$ and $A_1{\times}A_1$,
with exponents at most $4f$ and $2f$.
Let $r$ be a primitive prime divisor of $2^{12f}-1$. It avoids both
classes, $\vp{r}(|S|)\geq1$, and $x\mid2f<12f<r$; there is no exception.

\emph{Exceptional graph--field fixed points (B7).}
Let $S=G_2(3^f)$. The automorphism $\rho$ interchanges the two maximal
parabolics and satisfies $\rho^2=\varphi_3$, so the Borel class is a
flag-parabolic substitute. With
$V:=\Fix(\widetilde\rho^{\,f})\cap S$, one has $V={}^2G_2(q)$ for odd $f$
and $V=G_2(3^{f/2})$ for even $f$. These subgroups are maximal
\cite[Thm.~A, p.~33]{Kleidman1988}; for $f=1$, the identification
${}^2G_2(3)\cong\mathrm{P}\Gamma\mathrm{L}_2(8)$ and its maximality in
$G_2(3)$ are also recorded in the \textsc{Atlas} \cite{ATLAS}. The same
commuting-endomorphism argument as in B5 makes $[V]$ automorphism-stable.

\emph{Case $f$ odd.} Let $r$ be a primitive prime divisor of $3^{3f}-1$.
It divides $\Phi_3(q)=q^2+q+1$, and $|S|=q^6(q^6-1)(q^2-1)$ shows that
$r\mid|S|$. Primitivity gives $r\nmid|B|=q^6(q-1)^2$ and
$r\nmid|{}^2G_2(q)|=q^3(q^3+1)(q-1)$; moreover
$r\equiv1\pmod{3f}$ gives $r>2f\geq x$. There is no Zsigmondy exception
because the base is $3$.

\emph{Case $f$ even.} Let $r$ be a primitive prime divisor of $3^{6f}-1$.
Then $r\mid\Phi_6(q)=q^2-q+1$, while
$r\nmid|B|$ and
$r\nmid|G_2(3^{f/2})|=q^3(q^3-1)(q-1)$; also $x\mid2f<6f<r$.
The group $G_2(3)$ additionally carries a finite certificate for both $X$.

\emph{Conclusion.} In every case the selected classes and prime satisfy
Theorem~\ref{thm:D}, which excludes $S^k$ for every $k\geq2$.
\end{proof}

\section{Finite and sporadic certificate claims}\label{app:finite}

This appendix states exactly what the finite computations contribute. Command
lines, package installation, certificate filenames, hashes, and mutation-test
mechanics are kept in the repository supplement \cite{Repo}; none of those
operational details substitutes for the mathematical claims below.

\subsection{Finite-range coverage}

\begin{proposition}[Finite-range coverage]\label{prop:base}
Every non-abelian simple group $S$ with $|S|\leq1.05\times10^7$ is excluded
in the sense of Convention~\ref{conv:excluded}.
\end{proposition}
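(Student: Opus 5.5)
The plan is to reduce the proposition to a finite, explicit enumeration and then discharge each case by an exact certificate verified in GAP. \textbf{First step:} by the classification \cite[Ch.~1, Table~I]{GLS1} and the order formulas of \cite[Tables~5--6, p.~xvi]{ATLAS}, list every non-abelian simple $S$ with $|S|\leq1.05\times10^7$. This list consists of:
\begin{itemize}
\item the alternating groups $A_5,\dots,A_{10}$;
\item the groups $\PSL(2,q)$ with $q$ up to the corresponding bound;
\item finitely many small classical and exceptional groups of Lie type, such as $\PSL(3,q)$, $\PSU(3,q)$, $\PSp(4,q)$, $\PSL(4,2)$, $\PSU(4,2)$, $\Sz(8)$, $\Sz(32)$, $G_2(3)$ and $\PSL(3,4)$;
\item the sporadic groups $M_{11}$, $M_{12}$, $J_1$, $M_{22}$, $J_2$, $M_{23}$, $HS$, $J_3$.
\end{itemize}
Exceptional isomorphisms are recorded once each. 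For every such $S$, also enumerate the subgroups $X$ with $\Inn(S)\leq X\leq\Aut(S)$ up to $\Aut(S)$-conjugacy. This normalization is legitimate by Convention~\ref{conv:X}, so only conjugacy classes of subgroups of $\Out(S)$ are needed; for example, there are five for $A_6$ and ten for $\PSL(3,4)$.

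\textbf{Second step:} for each pair $(S,X)$, produce a certificate consisting of two classes $[V]\neq[W]$ and a prime $p$, and check the hypotheses of Theorem~\ref{thm:D}. Whenever possible I will take $V$ and $W$ to be maximal subgroups of $S$, since Remark~\ref{rem:maxauto} then makes poset maximality and normal saturation automatic. The checks reduce to two items:
\begin{itemize}
\item \emph{$X$-stability} of each class, verified by computing the action of generators of $X$ (inside $\Aut(S)$, realized as a permutation group) on the conjugacy classes of maximal subgroups;
\item the \emph{valuation inequality} $\vp{p}(|S|)-\vp{p}(|V|)-\vp{p}(|W|)>\vp{p}(x)$.
\end{itemize}
The natural first choice is a prime $p$ with $\vp{p}(|S|)=1$ and $p\nmid|\Out(S)|$, together with two stable maximal classes of $p'$-order. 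Such a prime exists in almost every small case; for $A_5$ one takes $p=5$, as in Example~\ref{ex:a5}.

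\textbf{Main obstacle:} the cases where a graph or exceptional automorphism in $X$ fuses maximal classes, leaving fewer than two $X$-stable maximal classes of $p'$-order. Examples are $A_6$ with $X\geq M_{10}$ or $\Aut(A_6)$, $\PSL(3,q)$ and $\PSL(4,q)$ with graph part, $\Sp(4,4)$, $\Omega^+(8,2)$ if it falls in range, and $G_2(3)$. For these I would first try the flag-parabolic substitutes of Lemma~\ref{lem:P}, for instance the Borel subgroup when the two maximal parabolics are swapped, paired with a stable non-parabolic maximal class (a torus normalizer or a graph-fixed-point subgroup). Hypotheses (a) and (b) are verified by listing the overgroups of $Q_J$ as parabolics and checking that each is moved by $X$. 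If no parabolic substitute is available, I would fall back on a direct computation: enumerate all $X$-stable self-normalizing classes in the subgroup lattice of $S$, find two maximal elements of $\Pos_X(S)$, and check normal saturation over their overgroups. The valuation gap can then be tested for every prime, accepting gaps greater than $1$ when $p\mid x$, as with $p=3$ for $\Sp(4,8)$ in the B5 case of Theorem~\ref{thm:graph}.

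\textbf{Final step:} collect the certificates in a table indexed by $(S,X)$ and note that the list of simple groups is complete up to the bound. The trust boundary is recorded separately: correctness rests on GAP's maximal-subgroup and automorphism data, which are cross-checked against \cite{ATLAS} and \cite{BHRD}.
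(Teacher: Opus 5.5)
Your plan is essentially the paper's proof: an exhaustive certification over all pairs $(S,X)$, with $X$ taken up to $\Aut(S)$-conjugacy. It uses ordinary $X$-stable maximal-class pairs with a valuation gap wherever possible, and stable-poset substitutes (flag parabolics via Lemma~\ref{lem:P}, or computed maximal elements of $\Pos_X(S)$ with checked normal saturation) elsewhere; the paper locates these closures in $\PSL(3,2)$, $\PSL(2,11)$, $A_6$, $\PSL(3,4)$, $\Sp(4,4)$ and $\PSL(5,2)$. The differences are minor: the paper routes the $38$ groups $\PSL(2,q)$ with $5\times10^5\leq|S|\leq1.05\times10^7$ to Theorem~\ref{thm:psl2}, and your inventory overshoots the bound, since $\mathrm{HS}$, $J_3$ and $\Sz(32)$ have orders $44{,}352{,}000$, $50{,}232{,}960$ and $32{,}537{,}600$, which is harmless because the extra cases only add work.
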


\begin{proof}
\emph{Inventory and routing.}
The certified inventory is the disjoint union of the $47$ non-abelian simple
groups with $|S|<5\times10^5$ and the $51$ groups with
$5\times10^5\leq|S|\leq1.05\times10^7$. The committed replay regenerates the
upper range with GAP's simple-group iterator. The lower range is read from its
frozen canonical inventory and checked for certificate coverage and routing;
the committed replay does not independently regenerate that lower range. In
the upper inventory,
the $38$ groups $\PSL(2,q)$ are routed to the uniform proof in
Appendix~\ref{app:psl2}; the remaining $13$ receive finite certificates.
Exactly two of those $13$ are not excluded by ordinary maximal-class pairs:
$\Sp(4,4)$ for $x=4$ and $\PSL(5,2)$ for $x=2$. Both are routed to the
stable-poset substitute certificates described below.

The exception manifest has precisely seven simple groups above the displayed
order bound: $A_{11},A_{12},A_{13},A_{14}$ receive finite certificates used
in Theorem~\ref{thm:an}, while $\PSL(6,2)$, $\Omega^+(8,2)$, and
$\Sp(4,8)$ are treated in Appendix~\ref{app:graph} by the stated substitute
primes. Thus every finite or exceptional route that appeared in the original
body-level inventory remains explicit in an appendix.

\emph{Ordinary and stable-poset substitute records.}
The repository retains the historical machine field
\texttt{kind=novelty}; in the manuscript a \emph{stable-poset substitute}
means a selected class that need not be maximal in $S$ but whose product
normalizer is maximal in $G$ by Lemma~\ref{lem:B}.
For every certified simple group $S$ and every coordinate closure
$\Inn(S)\leq X\leq\Aut(S)$, a successful record identifies two distinct
$X$-stable subgroup classes, their exact class identities and orders, an
obstruction prime, and the strict valuation gap required by
Theorem~\ref{thm:D}. For ordinary records the selected subgroups are maximal
in $S$. A maximal-class pair is available for every coordinate closure except
in designated closures of six simple groups:
\[
 \PSL(3,2),\ \PSL(2,11),\ A_6,\ \PSL(3,4),\ \Sp(4,4),\ \PSL(5,2).
\]
More precisely, the exceptional closures are $x=2$ for $\PSL(3,2)$ and
$\PSL(2,11)$; $X/\Inn(S)\in\{2_2,2_3,2^2\}$ for $A_6$; several recorded
closures for $\PSL(3,4)$; $x=4$ for $\Sp(4,4)$; and $x=2$ for
$\PSL(5,2)$. For the first four groups, the computation additionally
certifies normal saturation by three distinct overgroup and embedding-orbit
implementations. They share the same GAP runtime, group representations, and
pinned subgroup data, so their agreement is a cross-check within that shared
trust base rather than independent software verification. For $\Sp(4,4)$ and
$\PSL(5,2)$,
Lemma~\ref{lem:P} proves saturation and stable-poset maximality structurally.
Thus Corollary~\ref{cor:supplement-engine} and Theorem~\ref{thm:D} apply to
every record.

\emph{Representative certificates.}
Example~\ref{ex:a5} records the ordinary tuple
$(|A_4|,|S_3|,5,1)$. A stable-poset substitute record for $\PSL(3,2)$ with $x=2$ uses
the pair $(S_3,7{:}3)$ at $r=2$, with valuation gap $2>1$.
For $\Sp(4,4)$ with $x=4$, the pair is a Borel subgroup and the subfield
subgroup $\Sp(4,2)$, with certificate primes $5$ and $17$. For
$\PSL(5,2)$ with $x=2$, the selected classes are the incident-flag
parabolics $(Q_{\{2,3\}},Q_{\{1,4\}})$, with certificate primes $5$ and
$31$. The last two pairs are also instances of the uniform graph-fusion
constructions in Appendix~\ref{app:graph}.

\emph{Identity, completeness, and cross-checks.}
Each coordinate closure has a SHA-256 action fingerprint together with
intrinsic quotient and kernel data; the checker requires every required
closure exactly once. Each subgroup record carries its class identity, so
same-order non-conjugate classes are not identified by order alone. The
completeness assumptions are explicit: the two finite inventories and the
relevant subgroup classes in the pinned GAP data are assumed complete.
An independent routing and coverage check verifies that every inventory entry
is routed exactly once, and the three saturation implementations fail on any
disagreement.
The repository supplement exposes the challenge data and the independently
parsed successful records.
\end{proof}

\subsection{Sporadic groups and the Tits group}

\begin{proposition}[Certified sporadic and Tits cases]\label{prop:sporadic}
Every sporadic simple group is excluded, as is the Tits group
${}^2F_4(2)'$.
\end{proposition}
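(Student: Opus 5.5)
Every sporadic simple group $S$ has $|\Out(S)|\leq2$, and the Tits group ${}^2F_4(2)'$ has $\Out\cong C_2$. So there are at most two coordinate closures, $X=\Inn(S)$ and $X=\Aut(S)$, with $x\in\{1,2\}$. The plan is to exhibit, for each such $S$ and each $X$, two distinct $X$-stable classes of maximal subgroups $V,W$ of $S$ and a prime $p$ with
\[
\vp{p}(|S|)-\vp{p}(|V|)-\vp{p}(|W|)>\vp{p}(x).
\]
Remark~\ref{rem:maxauto} then supplies membership in $\Pos_X(S)$, poset maximality and normal saturation automatically. Theorem~\ref{thm:D} gives exclusion for all $k\geq2$, which is exactly exclusion in the sense of Convention~\ref{conv:excluded}.

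The first step is to choose $p$. I would take an odd prime, so that $\vp{p}(x)=0$ automatically, and make it large and dividing $|S|$ exactly once: for example $11$ for $M_{11}$, $M_{12}$, $M_{22}$, $HS$, $McL$; $23$ for $M_{23}$, $M_{24}$, $Co_2$, $Co_3$, $Co_1$; $19$ for $J_1$; $7$ for $J_2$; and so on up to the largest prime divisor for the Monster and Baby Monster. With $\vp{p}(|S|)=1$, it then suffices to find two stable maximal classes whose orders are both prime to $p$.

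The second step is to make the choices stable under $X=\Aut(S)$. I would restrict to maximal classes of $S$ that extend to novelty-free maximal subgroups of $\Aut(S)$, i.e.\ classes not fused by the outer automorphism. The \textsc{Atlas} and the published maximal-subgroup lists mark exactly which $S$-classes fuse in $S.2$. Since a class that is stable in $\Aut(S)$ is stable for every $X$, choosing $\Aut(S)$-stable classes handles both closures at once. For each group I would pick the two largest such classes avoiding $p$, typically point stabilizers of natural actions together with some local subgroup. Example: for $M_{11}$ with $p=11$, take $M_{10}$ and $L_2(11)$? No, $11\mid|L_2(11)|$; take $M_{10}$ and $M_9{:}2$ instead, both of order prime to $11$. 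When every stable maximal class of order prime to $p$ is unique, I would change the prime. Where necessary, only the \emph{existence} of the subgroups and their stability is needed, not completeness of the maximal-subgroup list (Remark~\ref{rem:crit}(ii)). For the Monster I would use known maximal subgroups such as $2\cdot B$ and $2^{1+24}.Co_1$ with $p=71$, since $\Out=1$.

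The main obstacle is verification rather than the idea. Stability under the outer automorphism and actual maximality must be confirmed for each chosen class, and for the large groups that rests on the literature and GAP character-table/class-fusion data rather than on an enumeration. I would therefore record each case as a certificate tuple $(S,X,[V],[W],p,\eta)$ as in Proposition~\ref{prop:base}: orders from the \textsc{Atlas}, fusion from the stored tables of marks or character-table fusions, and a GAP check of the valuation inequality. The Tits group would be treated the same way, with $p=13$ and two non-fused maximal classes such as $L_3(3){:}2$ (two classes fused in the automorphism group, hence unusable) replaced by $2.[2^8].5.4$ and $L_2(25)$ or $A_6.2^2$. The $13$-avoidance check and the stability of these classes is the only data needed.
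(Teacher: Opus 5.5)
Your overall route is the paper's: for each group, a certificate of two $X$-stable maximal classes and a prime with a strict valuation gap, closed off by Remark~\ref{rem:maxauto} and Theorem~\ref{thm:D}. The only difference is how stability is justified. When $|\Out(S)|=2$, the paper uses the fact that the selected order occurs only once in the complete pinned maximal list. You use published fusion data, which is an acceptable substitute.

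However, the Tits group case, the one case you make explicit besides $M_{11}$ and the Monster, fails as written. We have $|{}^2F_4(2)'|=2^{11}\cdot3^3\cdot5^2\cdot13$ and $|L_2(25)|=7800=2^3\cdot3\cdot5^2\cdot13$. So $L_2(25)$ does not avoid $13$: for $V=2.[2^8].5.4$ and $W=L_2(25)$ you get $\eta_{13}(V,W)=1-0-1=0$, and Theorem~\ref{thm:D} says nothing. Your alternative $A_6.2^2$ has exactly the defect you diagnosed for $L_3(3){:}2$. The Tits group has two classes of maximal $A_6.2^2$, interchanged by the outer automorphism of ${}^2F_4(2)$. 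Hence $[A_6.2^2]$ is not $X$-stable for $X=\Aut(S)$, and Lemma~\ref{lem:A} is unavailable for that closure; it would only serve for $X=\Inn(S)$. So neither proposed pair certifies the Tits group for both closures. The ``$13$-avoidance check and stability'' that you call the only data needed are precisely the two checks that fail here.

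The repair is easy. The maximal classes of ${}^2F_4(2)'$ have orders $11232$ (twice), $10240$, $7800$, $6144$, $1440$ (twice) and $1200$. By the unique-order argument, the classes $2.[2^8].5.4$, $2^2.[2^8].S_3$ and $5^2{:}4A_4$ are therefore $\Aut(S)$-stable, and all three have order prime to $13$.

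For example, take $V=2.[2^8].5.4$ and $W=2^2.[2^8].S_3$, of orders $2^{11}\cdot5$ and $2^{11}\cdot3$, with $p=13$. This gives $\eta=1>0=\vp{13}(x)$ for both coordinate closures. Alternatively, keep $L_2(25)$ and switch to $p=3$: each of the four stable classes has $3$-valuation at most $1$, so every pair has gap at least $3-2=1>0$.

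With this correction, and with the avoidance and stability checks actually carried out for each remaining group, your argument coincides with the paper's certificate proof.
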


\begin{proof}
\emph{Inventory and certificate content.}
The six groups $M_{11},M_{12},M_{22},M_{23},J_1,J_2$ lie in
Proposition~\ref{prop:base}. The remaining $20$ sporadic groups are
\[
\begin{gathered}
M_{24},J_3,J_4,\mathrm{HS},\mathrm{McL},\mathrm{Co}_1,\mathrm{Co}_2,
\mathrm{Co}_3,\mathrm{Suz},\mathrm{He},\mathrm{Ru},\mathrm{O'N},\\
\mathrm{Fi}_{22},\mathrm{Fi}_{23},\mathrm{Fi}_{24}',\mathrm{HN},
\mathrm{Ly},\mathrm{Th},B,M.
\end{gathered}
\]
For each of these groups and the Tits group, a certificate records two stable
maximal-subgroup classes, their published class identities and orders, an
obstruction prime, and the valuation gap. The resulting $42$ selected-class
records are bound to the group, character-table identifier, pinned maximal-list
position, and exact order. If $\Out(S)=1$, stability is automatic. If
$|\Out(S)|=2$, the selected order occurs exactly once in the complete pinned
list of maximal-subgroup classes, so automorphisms must preserve that class.
This unique-order argument is the only point here at which completeness of the
pinned maximal-subgroup list is logically required; no stored class fusion is
assumed.

\emph{Representative certificates.}
The Monster record uses maximal-list positions $45,46$, and the Baby Monster
record uses positions $29,30$. Their respective pairs and primes are
\[
 (59{:}29,41{:}40),\quad r=71;
 \qquad
 (L_2(11){.}2,47{:}23),\quad r=31.
\]
These examples are illustrative; the claim uses all $42$ records rather than
the samples alone.

The selected records are cross-checked against the published \textsc{Atlas}
and pinned character-table-library data \cite{CTblLib,ATLAS}, Wilson's
corrected sporadic tables
\cite[\S4, pp.~66--68]{Wilson2017}, and their provenance account
\cite[\S2, pp.~23--24]{BreuerMalleOBrien2017}. The Tits entries also have the
independent sources \cite[pp.~553--563]{Wilson1984} and
\cite{Tchakerian1986}; the Monster entries use Theorems~1.1--1.2 and Table~1
of \cite[pp.~862--863]{DietrichEtAl2026}. These source checks identify the
mathematical maximal-subgroup claims; GAP \cite{GAP} supplies the finite
certificates.
\end{proof}

\subsection{Family-arithmetic certificates}

The family proofs use an exact $34$-branch arithmetic manifest. A universal
checker matches every branch to its cited group order, Levi order,
outer-automorphism bound, maximality source, and Zsigmondy invocation; it also
routes all exceptional parameters to substitute primes or finite records. The
positive-base Zsigmondy exception occurs in $12$ branches, representing seven
distinct exception records, and every one is matched to its finite destination
or substitute-prime argument. A separately authored symbolic implementation
reproduces all $34$ branches, bounds, residue arguments, and exceptional
valuations without reading that manifest, and Lean checks the resulting
conditional arithmetic statements. The published formulas and classification
results remain external inputs.

Finite sweeps over $1272$ projective-linear prime powers, alternating degrees
through $10^4$, and $7892$ Lie-type parameter instances, with ranks through
$25$ and field sizes through $3000$, returned no failures. These are regression
checks, not proofs of the universally quantified family statements. Their role
is to detect transcription or implementation errors in the formulas proved in
Appendix~\ref{app:families}, not to replace those proofs.

\section{Verification and trust boundary}\label{app:trust}

The proof spine and the classification/computation layer are separated in
Tables~\ref{tab:trust-spine} and~\ref{tab:trust-coverage}, so that each row
states both its evidentiary mode and its external assumptions.

\begin{table}[htbp]
\footnotesize
\setlength{\tabcolsep}{4pt}
\renewcommand{\arraystretch}{1.12}
\begin{tabular}{@{}L{0.30\textwidth}L{0.15\textwidth}L{0.23\textwidth}L{0.25\textwidth}@{}}
\toprule
Claim & Ordinary proof relative to cited inputs & Formal coverage & External or trusted input \\
\midrule
Property $\mathrm{P}$ and quotient inheritance & Complete & Lean: complete & Standard finite-group definitions \\
Minimal-counterexample core & Complete & Lean: complete; Rocq interface input & Standard finite-group facts \\
Characteristically simple direct-power decomposition & Complete & Rocq producer; Lean reindexing consumer & Rocq--Lean semantic correspondence \\
Faithful wreath realization and quotient divisor & Complete & Lean: complete & Explicit direct-power equivalence \\
Product-supplement intersection and exact order & Complete & Lean: conditional & Stable-class hypotheses \\
Product-normalizer maximality & Complete & Lean: conditional & Stable-poset and saturation inputs \\
Nonconjugacy and valuation contradiction & Complete & Lean: conditional; Python arithmetic cross-check & Stable-class and order hypotheses \\
Main theorem end to end & Complete relative to cited external inputs & Not end-to-end formalized & CFSG, classifications, formal interface, and finite certificates \\
\bottomrule
\end{tabular}
\caption{Ordinary and formal status of the conceptual proof spine.}
\label{tab:trust-spine}
\end{table}

\begin{table}[htbp]
\footnotesize
\setlength{\tabcolsep}{4pt}
\renewcommand{\arraystretch}{1.12}
\begin{tabular}{@{}L{0.27\textwidth}L{0.22\textwidth}L{0.19\textwidth}L{0.24\textwidth}@{}}
\toprule
Coverage claim & GAP/Python role & Formal status & External or shared trust base \\
\midrule
Universal family arithmetic & Manifest, symbolic implementation, and regression sweeps & Lean: conditional branch theorems & Published group, Levi, outer, maximality, and Zsigmondy data \\
Graph-fusion flag-parabolic substitutes & Selected finite checks only & \texttt{PAR-NOVELTY} not closed in Lean & Cited $BN$-pair and parabolic theory \\
Finite-range and sporadic coverage & GAP certificates and Python parsing & Not formalized & Pinned GAP, \textsc{Atlas}, and character-table completeness \\
Cross-kernel coordinate interface & Signature and source-token checks & Rocq producer and Lean consumer each kernel-checked & Semantic correspondence between the two libraries \\
Evidence closure and mutation tests & Hash closure and deliberate fault detection & Not a mathematical proof layer & Shared files, runtimes, and pinned data are explicit \\
\bottomrule
\end{tabular}
\caption{Computational, classification, and cross-kernel trust boundary.}
\label{tab:trust-coverage}
\end{table}

Lean's strongest reader-facing output is the conditional product-supplement
spine: exact subgroup orders, maximality, nonconjugacy, the wreath-top divisor,
and a strict valuation gap imply that property $\mathrm{P}$ fails. The working
source tree contains a Comparator package that states this proposition
independently of the project implementation and checks the project theorem
against that statement.
It does not import CFSG coverage, the flag-parabolic substitute step, or finite GAP certificates
as formal theorems.

More explicitly, Lean checks quotient inheritance and the minimal-order core:
uniqueness and characteristic simplicity of the minimal normal subgroup,
its non-solubility, the solubility of the quotient, its trivial centralizer,
and faithfulness of the conjugation action. Under the normalized coordinate
hypotheses it checks the product-supplement intersection and exact order,
maximality, nonconjugacy, the subgroup-product lower-divisibility bridge, and
the universal-in-$k$ arithmetic contradiction. It also checks the conditional
arithmetic deductions for all $34$ family branches. The group and Levi orders,
outer-automorphism and maximality data, and Zsigmondy inputs remain the cited
external hypotheses of those branch theorems.

The coverage manifest retains the historical identifier
\texttt{PAR-NOVELTY}. It denotes the flag-parabolic substitute used when graph
automorphisms fuse parabolic classes.
It remains explicitly not closed in Lean; its proof is the ordinary argument
of Lemma~\ref{lem:P} and the representative application in
\S\ref{sec:applications}, using the cited parabolic theory.

Rocq/MathComp proves that the relevant finite characteristically simple
non-soluble group is an internal direct product of $k>0$ automorphic copies of
one non-soluble, non-abelian simple subgroup $S$, with $|N|=|S|^k$, and also
constructs an isomorphism to the corresponding external coordinate product.
Lean reindexes the nonempty coordinate type to $\{1,\ldots,k\}$ and consumes
the resulting equivalence $N\cong S^k$. It then constructs the coordinate
factors, proves that automorphisms permute them, constructs the faithful map
$\Aut(S^k)\to\Aut(S)\wr S_k$, proves transitivity of the $G$-action on the
factors, identifies the inverse image of $\Inn(S)^k$ as $N$, normalizes the
coordinate closure, and proves
\[
 |G/N|\mid |X/\Inn(S)|^k k!.
\]
No single kernel checks the translation between the two libraries' notions of
subgroup isomorphism and external product; that translation is the stated
trusted interface. The fail-closed formal interface checker identifies the
exact producer, reindexer, and consumer declarations and rejects drift in the
enumerated signatures and definition-source tokens. It does not decide
arbitrary semantic equivalence across the two libraries. The mathematical
identification between the two formulations therefore remains an audited
trusted correspondence, along with both kernels and their pinned libraries.

GAP certifies only the finite inventories and subgroup-class facts stated in
Appendix~\ref{app:finite}. Python independently parses those certificates,
checks coverage and source-map topology, and reproduces the family arithmetic.
Finite parameter sweeps are regression tests, not proofs of universal
statements. The proof-essential scripts are deterministic and fail closed:
a skipped case, soft failure, ambiguous identity, missing datum, version
mismatch, or byte-level certificate mismatch is fatal. Finite records retain
class positions and guard fingerprints as well as orders, primes, and
valuations; superseded exploratory scripts are excluded from the proof-evidence
manifests. The main theorem therefore remains an ordinary mathematical proof
supported by partial formal verification, certified finite computation, and
explicit external classification inputs.

\section*{Statements and Declarations}

\subsection*{Data availability}
The exact source and evidence tree is archived as release 1.1.2
\cite{Repo}. Its GitHub tag and Zenodo version record contain the manuscript,
formal developments, GAP programs, finite certificates, arithmetic manifests,
source maps, independent checkers, and computational-environment recipe
described here. The repository guide maps each computational claim to its
artifact; Appendices~\ref{app:finite} and \ref{app:trust} state the
mathematical claims and trust boundary without requiring operational log names
in the proof narrative. Runtime tools and direct downloads are version- or
hash-pinned, but the Debian package snapshot and complete opam solver closure
are not frozen; long-term replay is not fully hermetic at those layers.

\subsection*{Funding}
This research received no external funding.

\subsection*{Competing interests}
The author declares no competing interests.

\subsection*{Declaration of generative AI use}
Generative AI tools were used for mathematical exploration, software and
formalization development, literature-search assistance, and editorial
drafting; the author checked the arguments, sources, and reported
computational outputs and accepts full responsibility for the article.

\end{document}